\newtheorem{theorem}{Theorem}[section]
\newtheorem{corollary}[theorem]{Corollary}
\newtheorem{Thm}[theorem]{Theorem}
\newtheorem{lem}[theorem]{Lemma}
\newtheorem{lemma}[theorem]{Lemma}
\newtheorem{example}[theorem]{Example}
\newtheorem{problem}[theorem]{Problem}
\newtheorem{definition}[theorem]{Definition}
\newtheorem{Def}[theorem]{Definition}
\newtheorem{assumption}[theorem]{Assumption}
\newtheorem{assumptions}[theorem]{Assumptions}
\newtheorem{remark}[theorem]{Remark}
\newtheorem{Rem}[theorem]{Remark}
\newtheorem{remarks}[theorem]{Remarks}
\newtheorem{claim*}{Claim}
\newtheorem{notation}[theorem]{Notation}
\newcommand{\R}{\mathbb{R}}
\newcommand{\N}{\mathbb{N}}
\newcommand{\blemma}{\begin{lemma}}
	\newcommand{\elemma}{\end{lemma}}
\newcommand{\bnotation}{\begin{notation}}
	\newcommand{\enotation}{\end{notation}}
\newcommand{\bproof}{\begin{proof}}
	\newcommand{\eproof}{\end{proof}}
\newcommand{\bremark}{\begin{remark}}
	\newcommand{\eremark}{\end{remark}}
 \newcommand{\bremarks}{\begin{remarks}}
	\newcommand{\eremarks}{\end{remarks}}
\newcommand{\bcorollary}{\begin{corollary}}
	\newcommand{\ecorollary}{\end{corollary}}
\newcommand{\btheorem}{\begin{theorem}}
	\newcommand{\etheorem}{\end{theorem}}
 \newcommand{\bproblem}{\begin{problem}}
	\newcommand{\eproblem}{\end{problem}}
\newcommand{\bdefinition}{\begin{definition}}
	\newcommand{\edefinition}{\end{definition}}
\newcommand{\bequation}{\begin{equation}}
	\newcommand{\eequation}{\end{equation}}
\newcommand{\bequationn}{\begin{equation*}}
	\newcommand{\eequationn}{\end{equation*}}
\newcommand{\beqnarray}{\begin{eqnarray}}
	\newcommand{\eeqnarray}{\end{eqnarray}} \newcommand{\beqnarrayn}{\begin{eqnarray*}}
	\newcommand{\eeqnarrayn}{\end{eqnarray*}}
 \newcommand{\bexample}{\begin{example}}
	\newcommand{\eexample}{\end{example}}
  \newcommand{\bassumption}{\begin{assumption}}
	\newcommand{\eassumption}{\end{assumption}}
\newcommand{\loc}{{\rm loc}}
\numberwithin{equation}{section}
\newcommand\red[1]{\textcolor{red}{#1}}
	\newtcolorbox{leftbrace}{%
		enhanced jigsaw,
		breakable, % allow page breaks
		frame hidden, % hide the default frame
		overlay={%
			\draw [
			decoration={brace,amplitude=0.5em},
			decorate,
			ultra thick,
			]
			% right line
			(frame.south west)--(frame.north west);
		},
		% paragraph skips obeyed within tcolorbox
		parbox=false,
	}
\newcommand{\Hmm}[1]{\leavevmode{\marginpar{\tiny%
			$\hbox to 0mm{\hspace*{-0.5mm}$\leftarrow$\hss}%
			\vcenter{\vrule depth 0.1mm height 0.1mm width \the\marginparwidth}%
			\hbox to
			0mm{\hss$\rightarrow$\hspace*{-0.5mm}}$\\\relax\raggedright #1}}}
\DeclareMathOperator{\diam}{diam}
\DeclareMathOperator{\supp}{supp}
\DeclareMathOperator{\capacity}{Cap}
\DeclareMathOperator{\vol}{Vol}
\newcommand{\core}{C_c^{\infty}(\Omega)}
\newcommand{\dx}{\,\mathrm{d}x}
\newcommand{\dt}{\,\mathrm{d}t}
\newcommand{\myd}{\displaystyle}
\newcommand{\dsigma}{\,\mathrm{d}\sigma}
\newcommand{\dnu}{\,\mathrm{d}\nu}
\newcommand{\dnuk}{\,\mathrm{d}\nu_{k}}
\newcommand{\dmu}{\,\mathrm{d}\mu}
\newcommand{\dgamma}{\,\mathrm{d}\gamma}
\newcommand{\dgammak}{\,\mathrm{d}\gamma_{k}}
\newcommand{\dgammaa}{\,\mathrm{d}\gamma'}
\newcommand{\dgammaak}{\,\mathrm{d}\gamma_{k}'}
\DeclareMathOperator{\dive}{div}
\def\<{\langle}
\def\>{\rangle}
\long\def\prob#1\soln#2\endps{{\color{blue}#1}\medskip\par
	\noindent\underline{\sc Solution}:\hspace*{1em}\parindent=2em #2}
            \def\gg{\gamma}
       \def\vgf{\varphi}
      \def\gw{\omega}
\def\Gg{\Gamma}
\def\Gw{\Omega}              
\begin{document}
	\pagenumbering{gobble}
	%\vspace{-20mm}
	\title[Maz'ya-type characterization and Hardy constant]{\textbf{Finsler $p$-Laplace equation with a potential: Maz'ya-type characterization and attainments of the Hardy constant}}
\author{Yongjun Hou}
\thanks{2020 \emph{Mathematics Subject Classification.} Primary 35J20; Secondary 35B09, 35J62, 35J70.\\
	\emph{Key words and phrases.} Bregman distance, concentration compactness,  Finsler $p$-Laplace equation, Hardy constant,  Maz'ya-type characterization, Morrey space, spectral gap}
 \date{October 23, 2024}

	\pagenumbering{arabic}
	\vspace{-10mm}		
 \begin{abstract} 
 We study positive properties of the quasilinear elliptic equation 
 $$-\mathrm{div}\mathcal{A}(x,\nabla u)+V|u|^{p-2}u=0\quad (1<p<\infty)\qquad \mbox{ in } \Omega,$$
where the function $\mathcal{A}(x,\xi)$ is induced by a family of norms on $\mathbb{R}^{n}$ ($n\geq 2$) parameterized by points in the domain $\Omega\subseteq\mathbb{R}^{n}$, and $V$ belongs to a certain local Morrey space.
 We first establish some two-sided estimates for the Bregman distances of $|\xi|^{p}_{s,a}$ ($1<s<\infty$), where $a=(a_{1},a_{2},\ldots,a_{n})$ and $a_{1},a_{2},\ldots,a_{n}$ are certain functions with  positive local lower and upper bounds in $\Omega$. These estimates lead to a Maz'ya-type characterization for Hardy-weights of the corresponding functionals. Then we prove three types of sufficient conditions for the attainment of the Hardy constant in a certain space $\widetilde{W}^{1,p}_{0}(\Omega)$.% which are respectively based on the Maz'ya-type characterization and closed images of bounded sequences (up to a subsequence) under an operator~$T_{g}$, a positive spectral gap, and the concentration compactness.
\end{abstract}
\maketitle
\medskip

 \section{Introduction}
 \iffalse
 \Hmm{I do not think that~$|A\xi|_{q}$ is an~$L^{p}$ norm in general. That means I do not know about its uniform convexity or uniform smoothness. But this may be saved as a question for future consideration.}
 \fi
We study some positivity properties  of the energy functional
\begin{align*}
Q[\phi]\triangleq Q_{p,\mathcal{A},V}[\phi]\triangleq\int_{\Omega}\left(\mathcal{A}(x,\nabla \phi)\cdot\nabla\phi+ V(x)|\phi|^{p}\right)\dx\quad(1<p<\infty),
\end{align*}
which is assumed to be nonnegative on $\core$, the space of infinitely differentiable functions with compact support in~$\Omega$, and its local Euler-Lagrange equation%the associated quasilinear elliptic partial differential equation  
\begin{equation*}
Q'[u]\triangleq Q'_{p,\mathcal{A},V}[u]\triangleq -\dive\mathcal{A}(x,\nabla u)+V|u|^{p-2}u=0,
\end{equation*}
in a domain $\Gw\subseteq \R^n$ ($n\geq 2$), where the operator %~$n\geq 2$ ($n\in \N$), $1<p<\infty$,
$\dive\mathcal{A}(x,\nabla u)$, intensively investigated in the widely cited  book \cite{HKM}, is called the \emph{Finsler %\footnote{Paul Finsler (1894--1970) was a German and Swiss mathematician.}
$p$-Laplacian} \cite{Combete} (also called the \emph{$\mathcal{A}$-Laplacian} \cite{HPR} and when~$\mathcal{A}$ does not depend on~$x$, the \emph{anisotropic~$p$-Laplacian} \cite{Jaros}), and the potential $V$ belongs to a certain local Morrey space~$M^{q}_{\loc}(p;\Omega)$ or its enhanced versions in some contexts.  The foregoing equation is homogeneous of degree $p-1$, which is crucial to our study. A prototype of our equation is the celebrated $p$-Laplace equation %(see, e.g., \cite{Lbook}) 
$$-\dive(|\nabla u|^{p-2}\nabla u))=0.$$% We note that the equation~$Q'_{p,\mathcal{A},V}[u]=0$ is
%homogeneous, which is a crucial property for our study. 

The study of positivity properties related to the equation  $Q'[u]=0$ is called {\em criticality theory} which has gained much attention.
For criticality theory in the linear case, see the two review papers \cite{Murata,Psimon} respectively by Murata and Pinchover, and Pinsky's monograph \cite{Pinsky} from the probabilistic perspective. For the evolution of criticality theory in the quasilinear case, see Table~\ref{table1} below. For a discrete counterpart, see \cite{Fischer1,Fischer2,Keller2}, where criticality theory is developed for Schr\"odinger operators and the pseudo $p$-Laplacian with a potential on discrete infinite graphs. Throughout this paper, unless otherwise stated,~$A:\Omega\rightarrow\R^{n^{2}}$ is a measurable, symmetric, locally bounded, and locally uniformly positive definite matrix function, and~$|\xi|_{A}\triangleq\sqrt{A(x)\xi\cdot\xi}$ for almost all~$x\in\Omega$ and all~$\xi\in\R^{n}$.
%For Schr\"odinger operators and the pseudo $p$-Laplacian with a potential on discrete infinite graphs, see \cite{Fischer1,Fischer2,Keller2}. 
 % The proposed research is a natural continuation of my MSc~thesis \cite{Hou} which is published in \cite{HPR}.

\setcounter{table}{0}
\begin{table}[ht]\caption{The evolution of criticality theory in the quasilinear case}\label{table1}
\renewcommand\arraystretch{1.5}
\noindent
	\centering
	\begin{tabular}{||c c c||} 
		\hline
		The second-order term & The potential & The reference(s)\\ [0.5ex] 
		\hline
		the \emph{$p$-Laplacian}: $\dive(|\nabla u|^{p-2}\nabla u)$ &$L^{\infty}_{\loc}(\Omega)$& \cite{Pinliou,PCVPDE}\\
		\hline
		the \emph{$(p,A)$-Laplacian}: $\dive(|\nabla u|_{A}^{p-2}A(x)\nabla u)$ & $L^{\infty}_{\loc}(\Omega)$& \cite{Regev}\\
		\hline
		the $(p,A)$-Laplacian& $M^{q}_{\loc}(p;\Omega)$& \cite{PPAPDE}\\
		\hline
		the $(p,A)$-Laplacian& $\mathfrak{W}^{p}_{\loc}(\Omega)$& \cite{Giri2}\\ %[1ex] 
		\hline
		the Finsler $p$-Laplacian: $\dive\mathcal{A}(x,\nabla u)$ & $M^{q}_{\loc}(p;\Omega)$& \cite{Hou,HPR}\\
		\hline
	\end{tabular}
\end{table}

In recent years, the Finsler~$p$-Laplace equation, related to Finsler geometry (see \cite{Combete}),  has attracted numerous mathematicians. It is studied from a wide range of perspectives, e.g., overdetermined problems and Brunn-Minkowski type inequalities (see   \cite{Bianchini,Cianchi,Xianew,Xiabm}). See also Remark \ref{Finslerr} below. We also mention \cite{Figallia}, where Ciraolo, Figalli, and Roncoroni studied critical anisotropic~$p$-Laplace equations %$\diver\!(a(\nabla u)) +u^{p^*-1}=0$ 
in convex cones.

Definition \ref{hweight} below is a generalization of the counterpart in \cite[Definition 6.3]{HPR}.
%\Hmm{This definition is~$g\in L^{1}_{\loc}$. The previous one \cite[Definition 6.3]{HPR} permits only Morrey functions. They are kind of different. Shall I cite this definition officially or just keep it as it is now?}
\begin{definition}\label{hweight} 
%,~$\mathcal{A}(x,\xi)=\nabla_{\xi}F(x,\xi)=\nabla_{\xi}(\frac{1}{p}H(x,\xi)^{p})$ for almost all~$x\in\Omega$,
 \emph{A function~$g\in L^{1}_{\loc}(\Omega)$ is called a \emph{Hardy-weight of~$Q$} in $\Gw$ if there exists a constant~$C>0$ such that the following \emph{Hardy-type inequality}%\label{Hineq}
$$Q[\phi] \geq C\int_{\Omega}|g||\phi|^{p}\dx$$
holds for all~$\phi\in\core$.} 
\end{definition}
The following characterization by Maz'ya of Hardy-weights for the $p$-Laplacian is extracted from \cite[Theorem 8.5]{Maz'ya}.
\btheorem
Let~$\mu$ be an arbitrary measure on~$\Omega$. Then the following Hardy-type inequality
$$ \int_{\Omega}|\nabla\phi|^{p}\dx \geq C\int_{\Omega}|\phi|^{p}\dmu$$
holds for some positive constant~$C$ and all~$\phi\in\core$
if and only if for some positive constant~$C'$ and all open sets with smooth boundary such that $\bar{\omega}\subseteq\Omega$,
$$\mu(\omega)\leq C'\capacity_{p}(\bar{\omega}),$$
where~$\capacity_{p}(\cdot)$ is the classical~$p$-capacity.
\etheorem

For a standard reference on Hardy-type inequalities, see \cite{Balinsky}. In \cite{Das}, Das and Pinchover demonstrated a Maz'ya-type characterization for the~$(p,A)$-Laplacian with a potential in Morrey spaces and showed three types of sufficient conditions such that the best Hardy constant is achieved in a Beppo Levi space. See \cite{DKP} for similar results in the context of graphs. In \cite{Das2}, the authors proved that on~$C^{1,\gamma}$-domains with compact boundary, a certain weighted Hardy-type inequality has a unique (up to a positive multiplicative constant) positive minimizer if and only if the corresponding weighted~$p$-Laplacian admits a spectral gap. %See \cite{Lamberti} for an earlier work. In \cite{Goel}, a Hardy-type inequality was proved by virtue of criticality theory. 
See also \cite{Mercaldo} for anisotropic Hardy-type inequalities.

%The principal goal of this paper is to extend the results in \cite{Das} (see also references therein) to the case of the Finsler~$p$-Laplace equation with a potential $V$. %See \cite{HPR} for some recent results in this context.
Our central results, as some generalizations of the ones in \cite{Das}, consist of three parts. The first is some two-sided estimates for the Bregman distances (see Definition \ref{BREGMAN}) of~$|\xi|_{s,a}^{p}\triangleq \left(\sum_{i=1}^{n}a_{i}(x)|\xi_{i}|^{s}\right)^{p/s}$ ($\xi\in\R^{n}$) for almost all~$x\in\Omega$ (see Definition \ref{sanorm}) and some related simplified energies. The second is a Maz'ya-type characterization for Hardy-weights of two functionals respectively based on the norm~$|\cdot|_{s,a}$ and the norm~$\sqrt[p]{|\cdot|_{s,a}^{p}+|\cdot|_{A}^{p}}$. The last is three sufficient conditions for the attainment %\Hmm{I don't think the term 'sufficient condition' is very professional. It is a logical term. We shall use semantic terms in a paper.}
of the (best)~$Q$-Hardy constant (see Definition \ref{hardy_weight_norm} (1)).
 % weighted $L^{p}$-Hardy inequalities, optimal Hardy-weights, asymptotic behavior of positive solutions near an isolated Fuchsian singularity, and the Liouville comparison theorems.

The energy functional associated with the~$p$-Laplacian with a potential $V$ was simplified in \cite{Pinliou} via a Picone-type identity and a two-sided estimate for the Bregman %\footnote{Lev M. Bregman (1941--2023) was a Soviet and Israeli mathematician.} 
distance  of~$|\cdot|^{p}$. The derived estimate, called the {\em simplified energy}, was extended in \cite{Regev} to the~$(p,A)$-Laplacian with a potential.  The simplified energy led to further intriguing results including some Liouville comparison theorems (see \cite[Theorem 1.9]{Pinliou} and \cite[Theorem 4.21]{PPAPDE}) and the Maz'ya-type characterization in \cite{Das} for Hardy-weights of the corresponding functional. 

Our context also entails simplified energies, which motivates us to investigate relevant Bregman distance. The Bregman distances (also called the \emph{Bregman divergence} \cite{Sprung}) is  involved in a variety of mathematical works (see, e.g., \cite{BI00,BR06,Lindqvist,Pinliou,Shafrir}), in particular, in statistics, information geometry, and machine learning (see, e.g., \cite{Information,Sprung} and references therein),  even though the term `Bregman distance' is not mentioned directly in various contexts. For example, some upper and lower bounds of some Bregman distances were the cornerstone of \cite{Figalli}, where Figalli and Zhang proved a kind of sharp stability of the Sobolev inequality (see also \cite{Figalli3}). For the history of the Bregman distance, we refer to \cite{Burger,BI00}. 

The two-sided estimates of the involved Bregman distances in \cite{Regev,Pinliou} rely heavily on the inner product structure. %For a family of general norms on~$\R^{n}$, similar estimates of power type remain open.
In \cite{Sprung}, some upper and lower bounds were respectively established for uniformly smooth and uniformly convex norms (see \cite{Sprung} for definitions). However, for the norm~$|\cdot|_{s,a}$, the corresponding upper and lower bounds provided by \cite{Sprung} have essentially different forms except the special case~$s=2$. In particular, the two bounds can not control each other up to a positive multiplicative constant unless~$s=2$. %As such, we still need to seek two-sided estimates for Bregman distances based on general norms with certain properties.
In this work, we establish some two-sided estimates for the Bregman distances of~$|\cdot|_{s,a}^{p}$ (see Section~\ref{dbregman}), which eliminates the difficulty. Exploiting these estimates, we prove the foregoing Maz'ya-type characterization. % Further investigation is required for general cases. 

The outline of the paper is as follows. In Section \ref{equation}, we introduce a norm family~$H$, a variational Lagrangian~$F$, the mapping~$\mathcal{A}$, and the local Morrey space~$M^{q}_{\loc}(p;\Omega)$ together with two enhanced versions. Then we define the energy functional~$Q$ explicitly, some fundamental notions in criticality theory, and the associated Finsler~$p$-Laplace equation with a potential~$Q'[u]=0$. Furthermore, we give two extra assumptions on~$H$ and the potential~$V$. One is a lower bound for the Bregman distances of~$|\cdot|^{p}_{\mathcal{A}}$ locally in~$\Omega$ (see Assumption \ref{ass2}). The other is that in some particular cases,~$H$ and~$V$ are so regular that positive solutions of certain equations are in~$W^{1,\infty}_{\loc}$ (see Assumption \ref{ngradb}). We also define a~$Q$-capacity and the $Q$-Hardy constant. Some related fundamental properties are also listed. At the end of this section, as a generalization of \cite[Proposition 3.4]{Das}, we prove a necessary condition for Hardy-weights by virtue of positive solutions of minimal growth.

In Section \ref{ready}, we define the Bregman distance, discuss the differentiability of~$|\cdot|_{s,a}$ ($1<s<\infty$), and give some examples such that solutions of the relevant equations are in~$W^{1,\infty}_{\loc}$.
Then we prove the aforementioned two-sided estimates in detail. Furthermore, we obtain three relevant simplified energies and the Maz'ya-type characterization.

In Section \ref{attainmts}, we first define a~$Q$-Sobolev space~$\widetilde{W}^{1,p}(\Omega)$ and a subspace~$\widetilde{W}^{1,p}_{0}(\Omega)$. We also prove some basic properties of the two spaces. Then we provide three attainments of the $Q$-Hardy constant in the space~$\widetilde{W}^{1,p}_{0}(\Omega)$. The attainments are respectively based on closed images of bounded sequences (up to a subsequence) under an operator~$T_{g}$, a spectral gap, and a concentration compactness method.

In the appendix, we recall two two-sided estimates in \cite{Pinliou,Regev} for the Bregman distances of~$|\cdot|^{p}$ and~$|\cdot|_{A}^{p}$ respectively, where~$A$ is a symmetric positive definite matrix and~$|\xi|_{A}\triangleq\sqrt{A\xi\cdot\xi}$ for all~$\xi\in\R^{n}$.

\textbf{Throughout the paper, unless otherwise stated, we fix $1\!<\!p<\!\infty$, $n\geq 2$ ($n\in \N$), and a  domain (a nonempty, connected, and open set) $\Omega\! \subseteq \!\R^{n}$.}
%We denote by $\omega\Subset \Omega$ a  compactly embedded subdomain~$\omega$ of~$\Omega$.}
	%\subsubsection{Generalized Finsler~$p$-Laplace equation with a potential} 
 %\tableofcontents%\Hmm{The title 'Contents'(page 1) is in the same line as the date. Is it OK? Otherwise, how shall we adjust it?}
 %\iffalse
 \section*{Basic symbols}
 \normalsize
% \vspace{-1mm}
\begin{longtable}[1]{p{60pt} p{350pt} }
	%$\R^n$  & the standard $n$-dimensional Euclidean space, $n\geq 2,n\in\mathbb{N}$  \\
 %$(0,\infty)$  & the standard $n$-dimensional Euclidean space, $n\geq 2,n\in\mathbb{N}$  \\
 $\vol(\Gg)$ & the Lebesgue measure of a measurable subset~$\Gg$ of~$\R^{n}$\\
% $p,s$  & positive real numbers in~$(1,\infty)$\\
 $p^{*}$  & the Sobolev critical exponent~$np/(n-p)$ of~$1<p<n$\\
  % $\R_{+}$ & the set of all positive real numbers  \\
    %$|\xi|$& the Euclidean norm of a vector~$\xi\in\R^{n}$\\
    %$|\xi|_s$& the $\ell^s$ norm of a vector~$\xi\in\R^{n}$ ($1<s<\infty$)\\
%$ \xi\cdot\eta$ & the Euclidean inner product of $\xi$ and $\eta$ in $\R^n$ \\
    %$\dive{\xi}& %the divergence of~$\xi\in\R^{n}$, i.e. \sum_{i=1}^{n}
	%$\Omega$ & a  domain (nonempty, connected, and open set)  in $\mathbb{R}^{n}$\\ 
%$\delta_{\Omega}$& the distance function to~$\partial\Omega$\\
	$\omega \Subset \omega'$ & The closure of the domain $\omega$ is a compact subset of the domain $\omega'$. \\ %$\mathring{K},\bar{K}$ & the interior and closure of a set~$K\subseteq\R^{n}$ \\
 %$B_{Y},S_{Y}$& the closed unit ball and unit sphere in a Banach space~$Y$\\
	%$ p\in (1,\infty)$ & a fixed index related to the homogeneity properties of our operators\\
	%$ q\in [1,\infty]$ & an index of a Lebesgue/Sobolev/Morrey space\\
	%$q'$ & the conjugate exponent of $q$, i.e.~$p'\triangleq p/(p-1)$ \\
%	$p^*$ & the critical Sobolev  exponent of $p$ (for $p<n$), i.e.~$p^*\triangleq pn/(n-p)$  \\
	$B_r(x)$  & the open ball centered at $x\in\mathbb{R}^{n}$ with a radius $r>0$\\%; also written as~$B_{r}$ when the center is clear\\
%$B_r,S_{r}$  & the open ball and the sphere of the radius $r>0$ when the center is clear or unimportant \\
		%$\overline{K}$  & the closure of a set $K$ \\
%		$\overline{K}, \mathring{K}, \partial K$  & the closure, the interior, and the boundary of a set $K$ \\
	%	$\partial K$  & the boundary of $K$ \\
%$\mathcal{L}^{n}(K)$ & the Lebesgue measure of a measurable set $K\subset \R^n$ \\	%$C^1(\omega)$ & the space of $k$-times differentiable functions in $\omega \subset \R^n$ \\
%			$\supp(f)$ & the support of a function $f$\\
%		$f^{\pm}$ & the positive and negative parts of a function~$f$, so, $f=f^+-f^-$\\
		%	$\langle f\rangle_{U}$& the integral average of~$f$ over~$U$\\
%	$C^{\gamma}_{\loc}(\Omega)$ & the space of locally H\"{o}lder continuous functions of the order $0\!<\!\gamma\!\leq\! 1$\\
%$D_{f}(y,x)$ & the Bregman distances of a regular function~$f$  with respect to two points~$y$ and~$x$ (see Definition \ref{BREGMAN})\\
%$(H(x,\cdot))_{x\in\Omega}\!$ & a family of norms on~$\R^{n}$ satisfying certain assumptions\\
%$F(x,\xi)$ &$\frac{1}{p}H(x,\xi)^{p}$ for almost all~$x\in\Omega$ and all~$\xi\in\R^{n}$\\
%$\mathcal{A}(x,\xi)$ &$\nabla_{\xi} F(x,\xi)$ for almost all~$x\in\Omega$ and all~$\xi\in\R^{n}$\\
%$|\cdot|_{\mathcal{A}}$ &$H(x,\cdot)$ for almost all $x\in\Omega$ (see Notation \ref{Fgrad})\\
 $\supp f$& the support of a function~$f: U\rightarrow \mathbb{R}$ or~$f: U\rightarrow \mathbb{R}^{n}$, namely~$\overline{\{x\in U\,|f(x)\neq 0\}}$,  where~$U$ is an open subset of~$\R$ or~$\R^{n}$\\
$\mathcal{F}_{c}(U)$ & the space of all functions in~$\mathcal{F}(U)$ with compact support in~$U$, where~$\mathcal{F}(U)$ is a function space over an open set~$U\subseteq\R^n$\\
%$C(U)$ & alternatively, $C^0(U)$, the space of continuous functions in a nonempty open set~$U\subseteq\R^{n}$\\
	%$C_{c}(U)$ & alternatively, $C^0_c(U)$, the space of continuous functions in~$\Omega$ with compact support\\
	%$C_{c}^k(U)$ & recursively,  for $k\in\N$, the space  of continuously  differentiable functions with compact support whose partial derivatives are in 
	%	${\displaystyle C^{k-1}_c(U)}$ ($k\geq 1$).\\
	%$C^{\infty}_{c}(U)$ &   $\cap_{k=0}^\infty C^k_c(U)$\\	
	%$L^q(\Omega)$ & the $q$-Lebesgue space ($1\leq q  \leq\infty$)\\	$L^q_{\loc}(\Omega)$  & the local $q$-Lebesgue space \\
	%$W^{1,p}(\Omega)$   &  the Sobolev space\\
	%$W^{1,p}_{\loc}(\Omega)$   &  the local Sobolev spaces\\
	%$W^{1,p}_{0}(\omega)$  &  the closure of $C_c^\infty(\omega)$ in 
 %$W^{1,p}(\omega)$ \\
	%$W^{1,p}_{c}(\Omega)$ & the space of the functions in~$W^{1,p}(\Omega)$ with compact support\\
  %  $M^{q}(p;\omega)$ & the global Morrey space associated with the exponent $p$ in a domain~$\omega\Subset\Omega$ (Definition \ref{Morreydef1})\\
	%$M^{q}_{\loc}(p;\Omega)$ & the local Morrey space (see Definition \ref{Morreydef1})\\% (Definition \ref{Morreydef2})\\
	
%	$|\xi|_A$  & $ (A\xi\cdot\xi)^{1/2}$, where $A$ is a positive definite matrix \\	
%$|\xi|_{\mathcal{A}}$  & $ (\mathcal{A}(\cdot,\xi)\cdot\xi)^{1/p}$, where $\diver(\mathcal{A}(\cdot,\xi))$ is the $\mathcal{A}$-Laplacian \\
	%$\left\langle A(x) \xi,\eta\right\rangle$ & the inner product induced by a positive definite symmetric matrix $A(x)$ \\
	%$|\xi|_A$  & $\left\langle A(x) \xi,\xi\right\rangle^{1/2}$\\
	%$\nabla f$  & the gradient of a function $f$\\
 $f^{+}$  & the positive part of a real-valued function $f$, namely $\max\{f,0\}$\\
 $f^{-}$  & the negative part of a real-valued function $f$, namely $-\min\{f,0\}$\\ 
 $\partial_{i}f$& the $i$-th partial derivative of a differentiable function~$f$ defined in an open subset of~$\R^{n}$, where~$i=1,2,\ldots,n$\\
%$V$  & a potential in~$M^{q}_{\loc}(p;\Omega)$\\ 
%$Q$  & alternatively,~$Q_{p,\mathcal{A},V}$, the energy functional (see Definition \ref{efdfn})\\  $Q'$  &alternatively,~$Q'_{p,\mathcal{A},V}$, the \emph{Finsler~$p$-Laplacian} with a potential (see Definition \ref{def_sol})\\
%$\lambda_{1}$  & the generalized principal eigenvalue of~$Q'$ in a domain~$\Omega_{1}\subseteq\Omega$\\ 
 % $f|_{K}$  & the restriction of a function $f$ to a suitable set~$K$\\
	%$\Delta (f)$  & $\diver (\nabla f)$, the Laplacian of a function $f$ \\
	%$\Delta_p (f)$  & $\diver(|\nabla f|^{p-2} \nabla f)$, the $p$-Laplacian of a function $f$ \\
	%$\Delta_{p,A} (f)$  & $\diver(|\nabla f|_A^{p-2}  A(x)\nabla f)$, the $(p,A)$-Laplacian of a function $f$ \\
	%$\chi_K$  &  the characteristic function of $K$ \\
	%$||f||_p$ &  the $L^p$-norm of $f$ \\
	%	$V$ & a potential in  $M^{q}_{\loc}(p;\Omega)$\\
$c, C$ &%unless otherwise stated,
positive constants which may vary from line to line\\%depending only on the  parameters~$p_{1},\ldots,p_{l}$, where~$l\in\mathbb{N}$, with the convention that in a given context, the same letter $C$ is used to
%denote different constants depending on the same set of arguments.\\
$f\asymp g$&For some positive constants~$c$ and~$C$, the inequalities $cg\leq f\leq Cg$ hold in the same domain of the nonnegative functions $f$ and $g$; in this case, $f$ and $g$ are called \emph{equivalent}; the constants~$c$ and~$C$ are called \emph{equivalence constants}.\\
%$(X,b,m)$& a weighted graph (see Section \ref{discreteg})\\
%$L_{p,s}$& the weighted $(p,s)$-Laplace operator on a graph (see Section \ref{discreteg})\\
%$H_{p,s,c}$& the weighted $(p,s,c)$-Schr\"{o}dinger operator (see Section \ref{discreteg})\\
%$H_{p,s,c}$& the weighted $(p,s,c)$-Schr\"odinger operator on a graph (see Section \ref{discreteg})\\
%$\capacity_{\mathcal{A},V}$& the~$(\mathcal{A},V)$-capacity\\
		\end{longtable}
 % \newpage
 % \fi
\section{The functional~$Q$ and the equation~$Q'[u]=0$}\label{equation}
In this section, we define the functional~$Q$ precisely, the associated Finsler~$p$-Laplace equation with a potential~$Q'[u]=0$, a~$Q$-capacity, and the~$Q$-Hardy constant, clarify relevant assumptions, and enumerate some related properties. We also review criticality/subcriticality, null-sequences, ground states, and positive solutions of minimal growth. In the end, we prove a necessary condition for Hardy-weights.
	\subsection{The norm family~$H$, the operator~$\mathcal{A}$, and the potential~$V$}
In this subsection, we first define uniformly convex Banach spaces and introduce a family of norms~$(H(x,\cdot))_{x\in\Omega}$ on $\R^{n}$ which induce a variational Lagrangian~$F$ and the operator~$\mathcal{A}$. Then we give a brief exposition of the Morrey space~$M^{q}_{\loc}(p;\Omega)$. We also define two subspaces of~$M^{q}_{\loc}(p;\Omega)$.
	%\begin{leftbrace}
 
 The following definition is found on \cite[p.\,192]{Roach}. See also \cite{Z02} for more on uniform convexity.
\begin{definition}\label{modcon}
\emph{The \emph{modulus of convexity} of a Banach space~$(X,\Vert\cdot\Vert)$, whose dimension is greater than or equal to two, is defined as $$\delta_{X}(\varepsilon)\triangleq \inf\bigg\{1-\bigg\Vert\frac{x+y}{2}\bigg\Vert~\bigg|~x,y\in S_{X},\Vert x-y\Vert\geq \varepsilon\bigg\},$$ for~$0\leq\varepsilon\leq 2$, where~$S_{X}$ is the unit sphere in~$(X,\Vert\cdot\Vert)$. The Banach space~$(X,\Vert\cdot\Vert)$ is called \emph{uniformly convex} if~$\delta_{X}(\varepsilon)>0$ for all~$0<\varepsilon\leq 2$.}
\end{definition}
%\iffalse
\bremark
\emph{See \cite[p.~420]{Danes} for some alternative definitions of the modulus of convexity.}
\eremark 
The following assumptions are the basis of this work.	
\begin{assumptions}\label{ass9} 
		{\em Let~$H:\Omega\times\R^{n}\rightarrow\R$ be a mapping such that for almost all~$x\in\Omega$,~$H(x,\cdot)$ is a norm on~$\R^{n}$,	%Let~$(H(x,\cdot))_{x\in\Omega}$  be a family of norms on $\mathbb{R}^{n}$ 
  and satisfy  the following conditions:   
			\begin{description}
				\item[Measurability] For all~$\xi\in\mathbb{R}^{n}$, the mapping $x\mapsto H(x,\xi)$ is measurable in $\Gw$;
				\item[Local uniform equivalence] for  every domain $\omega\Subset \Gw$, there exist two constants~$0<\kappa_\omega\leq\nu_\omega<\infty$ such that for almost all $x\in \omega$ and all $\xi\in \R^n$,$$\kappa_\omega |\xi|\leq H(x,\xi)\leq\nu_\omega|\xi|;$$
				%$\kappa_\omega|\xi|\leq |\xi|_{x}\leq\nu_\omega|\xi|$.
				\item[Uniform convexity] for almost all~$x\in\Omega$, the Banach space $\mathbb{R}^{n}_{x}\triangleq(\mathbb{R}^{n},H(x,\cdot))$ is uniformly convex; %(i.e., for every $0<\vge\leq 2$, there exists $\gd>0$ such that for all $\xi,\eta\in S_{\mathbb{R}^{n}_{x}}$,  if $\|\xi - \eta\|\geq \vge$, then $\|\xi+\eta\|/2\leq 1-\gd$ \cite{Sprung});
				\item[Differentiability with respect to $\xi$] for almost all~$x\in \Gw$, the mapping $\xi\mapsto H(x,\xi)$ is differentiable in $\R^n\setminus\{0\}$.
				%\item {\bf Homogeneity:} $F(x,\lambda\xi)=|\lambda|^{p}F(x,\xi)$ for almost all~$x\in \Gw$, all~$\lambda\in\mathbb{R}$, and all~$\xi\in\mathbb{R}^{n}$.
			\end{description}		
		}
	\end{assumptions}
%Since $\mathbb{R}^{n}_{x}$ are of finite dimension, it seems better to avoid Fr\'echet and Gateaux and just say everywhere differentiable
\begin{comment}	\begin{assumption}\label{comcov}
		\emph{{\bf (Common uniform convexity)} for almost all~$x\in\Omega$, the Banach space$$\mathbb{R}^{n}_{x}=(\mathbb{R}^{n},\|\cdot\|_{x}),$$ is uniformly convex and for all subdomains~$\omega\Subset\Omega$, the function$$\delta^{\omega}(\varepsilon)\triangleq\inf_{x\in\omega\setminus N}\delta_{\mathbb{R}^{n}_{x}}(\varepsilon)>0,$$ for all~$\varepsilon\in(0,2]$ and some null set~$N\subseteq\omega$.}
	\end{assumption}
	\end{comment}
Now we introduce the Lagrangian~$F$. See also \cite[Assumption 2.1]{HPR}.
\begin{theorem}\label{F}
		%[{\cite[Section 2.1.1]{HPR}}]\label{ass9}
  Let~$(H(x,\cdot))_{x\in\Omega}$ be a family of norms on $\mathbb{R}^{n}$ fulfilling Assumption \ref{ass9}. 
	For almost all%\Hmm{In PDE, a Lebesgue null set can be ignored}
 ~$x\in \Omega$ and all~$\xi\in \R^{n}$, we define
	$$F(x,\xi)\triangleq\frac{1}{p}H(x,\xi)^{p}.$$
% \begin{eqnarray*}
			%	F:\Gw\times \mathbb{R}^{n} \longrightarrow [0,\infty),\quad 
			%	F(x,\xi)=\frac{1}{p}H(x,\xi)^{p}.
			%\end{eqnarray*}
			%$F:\Gw\times \mathbb{R}^{n} \! \rightarrow \![0,\infty)$.
		Then $F$ satisfies the following conditions:
			\begin{description}
				\item[Measurability] For all~$\xi\in\mathbb{R}^{n}$, the mapping $x\mapsto F(x,\xi)$ is measurable in $\Gw$;
				\item[Local uniform ellipticity and boundedness] for  every domain $\omega\Subset \Gw$, there exist two constants $0<\kappa'_\omega\leq\nu'_\omega<\infty$ such that for almost all $x\in \omega$ and all $\xi\in \R^n$,$$\kappa'_\omega|\xi|^{p}\leq F(x,\xi)\leq\nu'_\omega|\xi|^{p};$$
				%$\kappa_\omega|\xi|^{p}\leq F(x,\xi)\leq\nu_\omega|\xi|^{p}$.
				\item[Strict convexity and $C^1$ with respect to $\xi$] for almost all~$x\in \Gw$, the mapping $\xi\mapsto F(x,\xi)$ is strictly convex and continuously differentiable in $\R^n$;
				\item[Homogeneity]for almost all~$x\in \Gw$, all~$\lambda\in\mathbb{R}$, and all~$\xi\in\mathbb{R}^{n}$, $F(x,\lambda\xi)=|\lambda|^{p}F(x,\xi)$.
			\end{description}		
	\end{theorem}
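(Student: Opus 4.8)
The plan is to verify the four asserted properties of $F(x,\xi)=\tfrac1p H(x,\xi)^p$ one at a time, with everything but the last handled by direct bookkeeping. For a fixed $\xi$, the map $x\mapsto F(x,\xi)$ is the composition of the measurable map $x\mapsto H(x,\xi)$ with the continuous function $t\mapsto\tfrac1p t^p$, hence measurable. Raising the local uniform equivalence $\kappa_\omega|\xi|\le H(x,\xi)\le\nu_\omega|\xi|$ to the power $p$ and dividing by $p$ gives the local uniform ellipticity and boundedness with $\kappa'_\omega=\kappa_\omega^p/p$ and $\nu'_\omega=\nu_\omega^p/p$. Homogeneity is inherited from the norm: $F(x,\lambda\xi)=\tfrac1p\big(|\lambda|H(x,\xi)\big)^p=|\lambda|^pF(x,\xi)$. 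So the real content is the $C^1$ regularity and the strict convexity of $F(x,\cdot)$, which I would prove for a fixed admissible $x$.

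For the $C^1$ claim I would first upgrade the differentiability hypothesis: a finite convex function that is differentiable throughout an open set is automatically continuously differentiable there (the standard fact that the gradient map of a convex function is continuous wherever it exists), so $H(x,\cdot)\in C^1(\R^n\setminus\{0\})$, and hence by the chain rule $F(x,\cdot)\in C^1(\R^n\setminus\{0\})$ with $\nabla_\xi F(x,\xi)=H(x,\xi)^{p-1}\nabla_\xi H(x,\xi)$. The only delicate point is the origin, where $H(x,\cdot)$ itself is not differentiable; here $p>1$ is what saves the argument. From $0\le F(x,\xi)\le\nu'_\omega|\xi|^p$ one gets $F(x,\xi)=o(|\xi|)$ as $\xi\to0$, so $F(x,\cdot)$ is differentiable at $0$ with $\nabla_\xi F(x,0)=0$; and since $\nabla_\xi H(x,\cdot)$ is $0$-homogeneous (it is the gradient of a positively $1$-homogeneous function) and continuous on $\R^n\setminus\{0\}$, it is bounded, so $|\nabla_\xi F(x,\xi)|\le C\,H(x,\xi)^{p-1}\to 0$ as $\xi\to0$, which gives continuity of $\nabla_\xi F(x,\cdot)$ at the origin. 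Thus $F(x,\cdot)\in C^1(\R^n)$.

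Convexity of $F(x,\cdot)$ is immediate since $t\mapsto\tfrac1p t^p$ is convex and nondecreasing on $[0,\infty)$ and $H(x,\cdot)$ is convex. For strict convexity fix $\xi\neq\eta$ and split into two cases. If $H(x,\xi)\neq H(x,\eta)$, then combining the triangle inequality, monotonicity of $t\mapsto\tfrac1p t^p$, and its strict convexity on $[0,\infty)$,
$$F\left(x,\tfrac{\xi+\eta}{2}\right)\le\tfrac1p\left(\tfrac{H(x,\xi)+H(x,\eta)}{2}\right)^{p}<\tfrac12 F(x,\xi)+\tfrac12 F(x,\eta).$$
If $H(x,\xi)=H(x,\eta)=:r$, then $r>0$ (otherwise $\xi=\eta=0$), the vectors $\xi/r$ and $\eta/r$ are distinct points of the unit sphere of $\R^n_x=(\R^n,H(x,\cdot))$, and uniform convexity — hence strict convexity — of $\R^n_x$ forces $H\left(x,\tfrac{\xi+\eta}{2}\right)<r$, so that $F\left(x,\tfrac{\xi+\eta}{2}\right)<\tfrac{r^p}{p}=\tfrac12 F(x,\xi)+\tfrac12 F(x,\eta)$. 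In both cases $F(x,\cdot)$ is strictly convex.

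I expect the two places that need genuine care to be: the $C^1$ property across the origin, where one cannot appeal to any smoothness of $H$ at $0$ and must instead exploit $p>1$ together with the boundedness of $\nabla_\xi H$; and the second case of the strict convexity argument, where the convexity of $t\mapsto t^p$ alone is insufficient and the (uniform) convexity of each $\R^n_x$ must be invoked. The rest is routine manipulation of the norm inequalities supplied by Assumptions \ref{ass9}.
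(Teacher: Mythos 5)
Your proposal is correct, and for the two nontrivial items it follows a genuinely different, more self-contained route than the paper. The paper obtains strict convexity of $F(x,\cdot)$ by citing a theorem of Z\u{a}linescu stating that the $p$-th power of a uniformly convex norm is uniformly convex on bounded sets, and then a Butnariu--Resmerita equivalence to pass from uniform convexity on bounded sets to strict convexity; your two-case argument (triangle inequality plus strict convexity of $t\mapsto t^p$ when $H(x,\xi)\neq H(x,\eta)$, strict convexity of the unit ball of $\R^n_x$ when $H(x,\xi)=H(x,\eta)$) reaches the same conclusion with only elementary tools. The trade-off is that the paper's route also yields a quantitative modulus of convexity for $F(x,\cdot)$, which is consistent with its later discussion (Remark \ref{revaf}) of the equivalence between strict convexity of $F$ and uniform convexity of $H$, whereas your argument gives exactly what the theorem asserts and nothing more. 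For the $C^1$ property the paper delegates the work to the chain rule and a page reference in \cite{HKM}; you make explicit precisely the two points that reference is covering, namely the automatic continuity of the gradient of a convex function on the open set where it is differentiable (needed because Assumption \ref{ass9} only postulates differentiability of $H(x,\cdot)$ away from the origin, not $C^1$), and the behaviour at $\xi=0$, where $p>1$ together with the $0$-homogeneity and boundedness of $\nabla_\xi H(x,\cdot)$ gives both differentiability and continuity of the gradient. One small point worth recording: strict convexity requires the strict inequality for every $t\in(0,1)$, not only $t=\tfrac12$; since you have already established convexity of $F(x,\cdot)$, strict midpoint convexity suffices (equality at any interior $t$ would force $F(x,\cdot)$ to be affine on the segment and hence violate midpoint strictness), and in any case your two-case argument runs verbatim for general $t$.
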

%\begin{comment}
	\begin{proof}
		The measurability, the local uniform ellipticity and boundedness, and the homogeneity are obvious by Assumptions \ref{ass9} and the definition of the function~$F$. By \cite[Theorem 3.7.7 (ii)]{Z02}, for almost all~$x\in\Omega$, the function~$F(x,\xi)$ is uniformly convex on bounded sets with respect to~$\xi$ and hence strictly convex by \cite[Theorem 2.14, (i) $\Leftrightarrow$ (vii)]{BR06}.  Note that for almost all~$x\in\Omega$,~$\nabla_{\xi}F(x,0)=0$. Then the continuous differentiability follows from  the differentiability of~$H(x,\cdot)$ in~$\R^{n}\setminus\{0\}$ for almost all~$x\in\Omega$ postulated in Assumption~\ref{ass9}, the chain rule, and \cite[p.~97]{HKM}. 
	\end{proof}
 \bremark\label{revaf}
\emph{We might also start with the function~$F$ and regard the preceding theorem as our basic assumptions in place of Assumptions~\ref{ass9}. Then such a function~$F$ induces a family of norms~$\sqrt[p]{pF(x,\cdot)}$ (see \cite[Remark 2.4]{HPR}) satisfying Assumptions \ref{ass9}. See \cite[Theorem 2.14]{BR06} and \cite[Theorem 3.7.7]{Z02} for the equivalence between strict convexity of~$F(x,\xi)$ and uniform convexity of~$H(x,\xi)$ with respect to~$\xi$ for almost all~$x\in\Omega$}.
 \eremark
%\end{comment}	
	%\end{leftbrace}
	%The following is a useful inequality derived directly from the strict convexity of $F$.
	%\begin{lemma}[{\cite[Lemma 5.6]{HKM}}]\label{strictconvexity}
	%For almost all~$x\in\Omega$ and all~$\xi_{1},\xi_{2}\in\R^{n}$ with~$\xi_{1}\neq\xi_{2}$, we have:
	%  $$F(x,\xi_{1})-F(x,\xi_{2})>\nabla_{\xi}F(x,\xi_{2})\cdot(\xi_{1}-\xi_{2}).$$
	%\end{lemma}
	%\subsection{$\mathcal{A}$-Laplacian}
 \begin{notation}\label{Fgrad}
		\emph{Given a family of norms~$(H(x,\cdot))_{x\in\Omega}$  fulfilling Assumptions~\ref{ass9},  we write, for almost all~$x\in\Omega$ and all~$\xi\in\mathbb{R}^{n}$,
			$$ \mathcal{A}(x,\xi) \triangleq \nabla_\xi F(x,\xi).$$ 
			Moreover, frequently, we also write~$H(x,\xi)$ simply as~$|\xi|_{\mathcal{A}}$. Furthermore, when the norm family~$H$ does not depend on~$x$, by mild abuse of notation,
    we write~$H(\xi)$ and~$\mathcal{A}(\xi)$.}
	\end{notation}
The $p$-homogeneity of $F$ implies the following  relationship between~$F$ and $\mathcal{A}$.
	\begin{lemma}[{\cite[p.\,100]{HKM}}]
	Let~$(H(x,\cdot))_{x\in\Omega}$ be a family of norms satisfying Assumptions~\ref{ass9}. For almost all~$x\in\Omega$ and all~$\xi\in\mathbb{R}^{n}$,~$\mathcal{A}(x,\xi)\cdot\xi =pF(x,\xi).$
	\end{lemma}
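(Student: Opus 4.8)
The statement to prove is the identity $\mathcal{A}(x,\xi)\cdot\xi = pF(x,\xi)$ for almost all $x\in\Omega$ and all $\xi\in\mathbb{R}^n$, which is the classical Euler identity for positively homogeneous functions (here $F(x,\cdot)$ is $p$-homogeneous), cited from Heinonen–Kilpeläinen–Martio \cite[p.\,100]{HKM}.

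\medskip

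The plan is to invoke Euler's theorem on homogeneous functions. First I would fix a point $x\in\Omega$ for which $F(x,\cdot)$ is continuously differentiable on $\mathbb{R}^n$ (such $x$ form a set of full measure by Theorem \ref{F}), and fix $\xi\in\mathbb{R}^n$. If $\xi=0$ the identity reads $0=0$ since $\nabla_\xi F(x,0)=0$ and $F(x,0)=0$, so I may assume $\xi\neq 0$. Next I would consider the single-variable function $\varphi(t)\triangleq F(x,t\xi)$ for $t>0$. By the homogeneity property in Theorem \ref{F}, $\varphi(t)=|t|^p F(x,\xi)=t^p F(x,\xi)$ for $t>0$, so $\varphi'(t)=p t^{p-1}F(x,\xi)$. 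On the other hand, by the chain rule (using the $C^1$ regularity of $F(x,\cdot)$), $\varphi'(t)=\nabla_\xi F(x,t\xi)\cdot\xi=\mathcal{A}(x,t\xi)\cdot\xi$. Evaluating both expressions at $t=1$ yields $\mathcal{A}(x,\xi)\cdot\xi = pF(x,\xi)$, as desired.

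\medskip

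There is essentially no obstacle here: the only point requiring a word of care is the differentiability at $t=1$, which is guaranteed because $F(x,\cdot)\in C^1(\mathbb{R}^n)$ by Theorem \ref{F} and $t\mapsto t\xi$ is smooth, so $\varphi\in C^1((0,\infty))$; and the homogeneity relation $F(x,t\xi)=t^pF(x,\xi)$ holds for all $t\geq 0$ by the homogeneity clause of Theorem \ref{F}. One could alternatively differentiate the identity $F(x,\lambda\xi)=|\lambda|^pF(x,\xi)$ directly in $\lambda$ at $\lambda=1$, which gives the same conclusion. Since the result is quoted verbatim from \cite{HKM}, it would also be acceptable simply to cite it, but the one-line computation above makes the paper self-contained.
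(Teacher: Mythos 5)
Your proof is correct: the identity is precisely Euler's relation for the $p$-homogeneous, $C^1$ function $F(x,\cdot)$, and your one-line computation (differentiating $t\mapsto F(x,t\xi)=t^pF(x,\xi)$ at $t=1$, with the trivial check at $\xi=0$) is exactly the argument behind the cited result in \cite{HKM}, which the paper itself quotes without proof.
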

	%Now we define the operator~$\mathcal{A}$.
	%\begin{Def}
	%{\em
		% Let~$\Omega\subseteq \R^{n}$ be a nonempty domain and $F(x,\xi)$ satisfy Assumptions \ref{ass9}. For almost all~$x\in \Gw$, we denote by $\mathcal{A}(x,\xi) \triangleq  \nabla_{\xi}F(x,\xi)$ the classical gradient  of $F(x,\xi)$ with respect to~$\xi$. The\emph{~$\mathcal{A}$-Laplacian} is defined as the divergence of~$\mathcal{A}$.
		%	}
	%\end{Def}
	%\begin{remark}
	%\emph{ By Euler's homogeneous function theorem, for almost all~$x\in\omega$,
		%	 $$\mathcal{A}(x,\xi)\cdot\xi =
		%	p F(x,\xi)  \geq p\gk_\gw |\xi|^p \qquad \forall \xi\in \R^n .$$
		%Moreover,  since for almost all $x\in \Gw$ the nonnegative function
		% \begin{equation}\label{newformula}
			%\vert\xi\vert_{\mathcal{A}}=\vert\xi\vert_{\mathcal{A}(x)}\triangleq  (\mathcal{A}(x,\xi)\cdot\xi)^{1/p}
			%\end{equation}
			% is positively homogeneous of degree $1$ in $\xi$, and $\{\xi \in \R^n \mid \vert\xi\vert_{\mathcal{A}}\leq 1\}$ is a convex set, it follows that for almost all $x\in \Gw$,
			%$\vert\xi\vert_{\mathcal{A}}$
			% is  a norm on $\R^n$  (see, for example, \cite[Theorem 1.9]{Simon}).}
		%\end{remark}
 We list some basic properties of the operator~$\mathcal{A}$.
		\begin{Thm}[{\cite[Lemma 5.9]{HKM}}]\label{thm_1}
			Let~$(H(x,\cdot))_{x\in\Omega}$ be a family of norms satisfying Assumptions~\ref{ass9}. For every domain $\omega\Subset\Omega$, let $\alpha_{\omega}\triangleq\kappa'_{\omega}$ and $\beta_{\omega}\triangleq2^{p}\nu'_{\omega}$. Then the vector-valued function~$\mathcal{A}(x,\xi):  \Gw\times \mathbb{R}^{n}\rightarrow \mathbb{R}^{n}$ satisfies the following conditions:
				\begin{description}
					%\item The map$$\mathcal{A}(x,\xi):  \Gw\times \mathbb{R}^{n}\rightarrow \mathbb{R}^{n},$$ is a vector-valued function.
					\item[Continuity and measurability] For almost all $x\in \Gw$, the function
					$\mathcal{A}(x,\xi ): \mathbb{R}^{n} \rightarrow \mathbb{R}^{n}$
					is continuous with respect to $\xi$, and  $x \mapsto \mathcal{A}(x,\xi)$ is Lebesgue measurable in $\Gw$ for all~$\xi\in \mathbb{R}^{n}$;
					%\item The function$$x \mapsto \mathcal{A}(x,\xi),$$ is Lebesgue measurable in $\Gw$ for all~$\xi\in \mathbb{R}^{n}$.
					%
					\item[Local uniform ellipticity and boundedness] for all domains $\omega\Subset \Gw$, all $\xi \in \mathbb{R}^{n}$, and almost all $x\in \omega$,
					\begin{equation*}\label{structure}
						\alpha_\omega|\xi|^{p}\le\mathcal{A}(x,\xi)\cdot\xi\quad\mbox{and}\quad
						|\mathcal{A}(x,\xi)|\le \beta_\omega\,|\xi|^{{p}-1};
					\end{equation*}
					\item[Strict monotonicity]  for almost all~$x\in\Gw$ and all~$\xi,\eta\in\mathbb{R}^{n}$($\xi\neq\eta$),
					$$\big(\mathcal{A}(x,\xi)-\mathcal{A}(x,\eta)\big)\cdot(\xi-\eta)>0;$$
     \item[Homogeneity] for all~$\lambda\in {\mathbb{R}\setminus\{0\}}$,
					$\mathcal{A}(x,\lambda \xi)=\lambda\,|\lambda|^{p-2}\,\mathcal{A}(x,\xi).$
			\end{description}
		\end{Thm}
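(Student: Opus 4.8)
The plan is to obtain each of the four asserted properties of $\mathcal{A}(x,\xi)=\nabla_\xi F(x,\xi)$ from the corresponding property of $F$ established in Theorem \ref{F}, together with the identity $\mathcal{A}(x,\xi)\cdot\xi=pF(x,\xi)$. The continuity of $\xi\mapsto\mathcal{A}(x,\xi)$ on $\R^n$ is immediate from the fact that $F(x,\cdot)$ is $C^1$. For the measurability of $x\mapsto\mathcal{A}(x,\xi)$, I would write, for each $i$, the component $\partial_{\xi_i}F(x,\xi)$ as the pointwise (a.e.\ in $x$) limit as $k\to\infty$ of the difference quotients $k\big(F(x,\xi+k^{-1}e_i)-F(x,\xi)\big)$, each of which is measurable in $x$ by the measurability clause of Theorem \ref{F}; a pointwise limit of measurable functions is measurable. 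The homogeneity relation $\mathcal{A}(x,\lambda\xi)=\lambda|\lambda|^{p-2}\mathcal{A}(x,\xi)$ for $\lambda\neq0$ then follows by differentiating $F(x,\lambda\xi)=|\lambda|^pF(x,\xi)$ in $\xi$ (chain rule on the left-hand side) and using $|\lambda|^p/\lambda=\lambda|\lambda|^{p-2}$.

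For the ellipticity lower bound I would combine $\mathcal{A}(x,\xi)\cdot\xi=pF(x,\xi)$ with the local lower bound $F(x,\xi)\ge\kappa'_\omega|\xi|^p$, obtaining $\mathcal{A}(x,\xi)\cdot\xi\ge p\kappa'_\omega|\xi|^p\ge\kappa'_\omega|\xi|^p=\alpha_\omega|\xi|^p$ for $x\in\omega$, since $p>1$. The growth bound $|\mathcal{A}(x,\xi)|\le\beta_\omega|\xi|^{p-1}$ is the one step that needs an idea, namely exploiting convexity: since $F(x,\cdot)$ is convex and differentiable, $F(x,\xi+\eta)\ge F(x,\xi)+\mathcal{A}(x,\xi)\cdot\eta$ for all $\eta\in\R^n$. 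Fix $x\in\omega$ and $\xi\neq0$; if $\mathcal{A}(x,\xi)=0$ there is nothing to prove, and otherwise, taking $\eta\triangleq|\xi|\,\mathcal{A}(x,\xi)/|\mathcal{A}(x,\xi)|$ and using $F\geq0$ gives $|\xi|\,|\mathcal{A}(x,\xi)|\le F(x,\xi+\eta)\le\nu'_\omega|\xi+\eta|^p\le\nu'_\omega(2|\xi|)^p=2^p\nu'_\omega|\xi|^p$, whence $|\mathcal{A}(x,\xi)|\le2^p\nu'_\omega|\xi|^{p-1}=\beta_\omega|\xi|^{p-1}$; the case $\xi=0$ is covered by $\mathcal{A}(x,0)=\nabla_\xi F(x,0)=0$, already noted in the proof of Theorem \ref{F}.

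For strict monotonicity, I would start from the two subgradient inequalities $F(x,\eta)\ge F(x,\xi)+\mathcal{A}(x,\xi)\cdot(\eta-\xi)$ and $F(x,\xi)\ge F(x,\eta)+\mathcal{A}(x,\eta)\cdot(\xi-\eta)$, whose sum yields $\big(\mathcal{A}(x,\xi)-\mathcal{A}(x,\eta)\big)\cdot(\xi-\eta)\ge0$. To promote this to a strict inequality when $\xi\neq\eta$, I would consider $g(t)\triangleq F\big(x,\xi+t(\eta-\xi)\big)$ on $[0,1]$, which is strictly convex because $\xi+t_1(\eta-\xi)\neq\xi+t_2(\eta-\xi)$ whenever $t_1\neq t_2$; hence $g'$ is strictly increasing, and since $g'(t)=\mathcal{A}\big(x,\xi+t(\eta-\xi)\big)\cdot(\eta-\xi)$ one obtains $\big(\mathcal{A}(x,\xi)-\mathcal{A}(x,\eta)\big)\cdot(\xi-\eta)=g'(1)-g'(0)>0$. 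I expect the growth bound for $|\mathcal{A}|$ to be the only genuinely non-routine point; the remaining properties are direct transcriptions of those of $F$, needing only minor care in the measurability argument and in the edge cases $\xi=0$ and $\mathcal{A}(x,\xi)=0$.
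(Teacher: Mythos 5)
The paper does not give its own proof here; it cites this result directly as \cite[Lemma~5.9]{HKM}. Your proof is correct and is essentially the standard argument found there: homogeneity by differentiating $F(x,\lambda\xi)=|\lambda|^pF(x,\xi)$, the lower bound from $\mathcal{A}(x,\xi)\cdot\xi=pF(x,\xi)$, the upper bound via the subgradient inequality evaluated at the well-chosen $\eta=|\xi|\,\mathcal{A}(x,\xi)/|\mathcal{A}(x,\xi)|$, and strict monotonicity from strict convexity restricted to the segment $[\xi,\eta]$; the measurability-by-difference-quotients step is likewise the usual one.
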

Now we are in a position to define the local Morrey space~$M^{q}_{\loc}(p;\Omega)$. See also \cite[Definitions 2.1 and 2.3]{PPAPDE}
  \begin{Def}\label{Morreydef1}{\em
	Suppose that~$\omega\Subset \Omega$ is a domain and $f$ is a real-valued measurable function in~$\omega$. % is a real-valued measurable function in~$\omega$.
  Then, with~$\mathrm{diam}(\omega)$ standing for the diameter of~$\omega$,
				\begin{itemize}
					\item for $p<n$ and $q>n/p$, we define, letting~$q'\triangleq q/(q-1)$,$$\Vert f\Vert_{M^{q}(p;\omega)}\triangleq \sup_{\substack{y\in\gw\\0<r<\diam(\gw)}}
					\frac{1}{r^{n/q'}}\int_{\omega\cap B_{r}(y)}|f|\dx,$$ and 
					$$M^{q}(p;\omega)\triangleq\{f\in L^{1}_{\loc}(\omega)~|~\Vert f\Vert_{M^{q}(p;\omega)}<\infty\};$$ 
					%The function~$f\in M^{q}_{\loc}(\Omega)$ if for every~$\omega\Subset \Omega$, $$\Vert f\Vert_{M^{q}(\omega)}<\infty.$$
					\item for $p=n$ and $q>n$ , we define$$\Vert f\Vert_{M^{q}(n;\omega)}\triangleq \sup_{\substack{y\in\gw\\0<r<\diam(\gw)}} \varphi_{q}(r)\int_{\omega\cap B_{r}(y)}|f|\dx,$$
					where $\varphi_{q}(r)\triangleq \left(\log\big(\mathrm{diam}(\omega)/r\big)\right)^{q/n'}$, and
$$M^{q}(n;\omega)\triangleq\{f\in L^{1}_{\loc}(\omega)~|~\Vert f\Vert_{M^{q}(p;\omega)}<\infty\};$$
					\item for $p>n$ and $q=1$, we define~$M^{q}(p;\omega)\triangleq L^{1}(\omega)$.
				\end{itemize}
    Finally, we define the \emph{local Morrey space} by $$M^{q}_{\loc}(p;\Omega)\triangleq \bigcap_{\substack{\omega\Subset\Omega\\\omega~\mbox{is a domain}}}M^{q}(p;\omega).$$
			}
		\end{Def}
  It can be easily verified that~$L^{q}_{\loc}(\Omega)\subseteq M^{q}_{\loc}(p;\Omega)$. The following example demonstrates that $M^{q}_{\loc}(p; \R^{n})\neq  L^{q}_{\loc}(\R^{n})$.
The calculation is elementary and hence omitted.%\Hmm{See [V2, Example 2.10] for the proof.}  

\bexample\label{MLebesgue}
 \emph{Let~$1<p<n$,~$n/p<q<\infty$,~$\Omega=\R^{n}$, and~$f(x)\triangleq 1/|x|^{n/q}$.
 \iffalse
and~$f(x)\triangleq\begin{cases}
\frac{1}{|x|^{1/q}}&\mbox{if $|x|\leq 1$,}\\
0&\mbox{if~$|x|>1$.}
\end{cases}$. 
\fi
Then~$f(x)\in M^{q}_{\loc}(p;\R^{n})\setminus L^{q}_{\loc}(\R^{n})$.
\iffalse 
For every~$r>0$, % and any domain~$\omega\Subset\R^{n}$, 
we compute:
\begin{align*}
\frac{1}{r^{n/q'}}\int_{B_{r}(0)}\frac{1}{|x|^{n/q}}\dx&=\frac{1}{r^{n/q'}}\int_{0}^{r}\int_{S^{n-1}(t)}\frac{1}{t^{n/q}}\dsigma\dt\\
&=\frac{C}{r^{n/q'}}\int_{0}^{r}\frac{1}{t^{n/q}}t^{n-1}\dt=\frac{C}{n-n/q}r^{n-n/q-n/q'}=\frac{C}{n-n/q},
\end{align*}
where~$S^{n-1}(t)\triangleq \partial B_{t}(0)$ and~$\dsigma$ is the surface area measure.
Consider any domain~$\omega\Subset\R^{n}$, $0<r<\diam(\omega)$, and any~$y\in\omega$.
\begin{itemize}
\item If~$|y|\leq 2r$, then~$|y|+r\leq 3r$, and hence~$B_{r}(y)\subseteq B_{3r}(0)$. Furthermore, for all such~$y$ and~$r$, \begin{align*}
\frac{1}{r^{n/q'}}\int_{\omega\cap B_{r}(y)}|f(x)|\dx&\leq\frac{3^{n/q'}}{(3r)^{n/q'}}\int_{B_{3r}(0)}|f(x)|\dx\\
&=\frac{3^{n/q'}}{(3r)^{n/q'}}\int_{B_{3r}(0)}\frac{1}{|x|^{n/q}}\dx=C'.\end{align*}
\item If~$|y|>2r$, then for all~$x\in B_{r}(y)$,~$|x|>r$. It follows that
\begin{align*}
\frac{1}{r^{n/q'}}\int_{\omega\cap B_{r}(y)}|f(x)|\dx&\leq\frac{1}{r^{n/q'}}\int_{B_{r}(y)}|f(x)|\dx\\
&=\frac{1}{r^{n/q'}}\int_{B_{r}(y)}\frac{1}{|x|^{n/q}}\dx\\
&\leq\frac{1}{r^{n/q'}}\int_{B_{r}(y)}\frac{1}{r^{n/q}}\dx\\
&=C''r^{n-n/q-n/q'}=C''
%&\leq \frac{1}{r^{n/q'}}\int_{r}^{|y|+r}\int_{S^{n-1}(t)}\frac{1}{t^{n/q}}\dsigma\dt\\
%&=\frac{C}{n-n/q}\frac{1}{r^{n/q'}}t^{n-n/q}\bigg|^{|y|+r}_{r}\leq C\diam(\omega)^{(n-1)/q}.
\end{align*} 
\end{itemize}
We see at once that~$\Vert f\Vert_{M^{q}(p;\omega)}<\infty$ for all~$\omega\Subset\R^{n}$.
\fi
}
  \eexample
 % \nopagebreak
  %\subsubsection{Morrey spaces}
		%\begin{comment}
		%I suggest to omit the definition of local Morrey space, just say the fact that under such and such assumptions solutions of the underlined equation satisfies such and such properties.
  \textbf{Throughout the paper,  $(H(x,\cdot))_{x\in\Gw}$ is a family of norms on $\mathbb{R}^{n}$ fulfilling Assumptions~\ref{ass9}, and~$V\in M^{q}_\loc(p;\Omega)$, the local Morrey space}.
  
  Next we define two subspaces of~$M^{q}_{\loc}(p;\Omega)$. %We use the other subspace~$\widehat{M}^{q}_{\loc}(p;\Omega)$ of~$M^{q}_{\loc}(p;\Omega)$ for gradient estimates (see Theorem \ref{gethm}). 
  \begin{definition}\label{strm}
\emph{In the definition of the Morrey space $M^{q}_{\loc}(p;\Omega)$,
	 we require further that  when~$p<n$,~$q>n$ and %~$q>n/(p-\theta)$;
when~$p=n$, for some~$\theta\in (n-1,n)$ and all domains~$\omega\Subset\Omega$,
$$\sup_{\substack{y\in\omega\\0<r<\diam(\omega)}}\frac{1}{r^{\theta}}\int_{B_{r}(y)\cap\omega}|V|\dx<\infty.$$
We denote the new space by~$\widetilde{M}^{q}_{\loc}(p;\Omega)$. In addition, we define the space~$\widehat{M}^{q}_{\loc}(p;\Omega)$ as the set of all the functions~$V$ in~$\widetilde{M}^{q}_{\loc}(p;\Omega)$ such that when~$p>n$, for some~$\vartheta\in (p-1,p)$ and all domains~$\omega\Subset\Omega$,
$$\sup_{\substack{y\in\omega\\0<r<\diam(\omega)}}\frac{1}{r^{n-p+\vartheta}}\int_{B_{r}(y)\cap\omega}|V|\dx<\infty.$$}
\end{definition}
 % \newpage
 %\nopagebreak
%\vspace{-8mm}
\subsection{The energy functional~$Q$}
 In this subsection, we define the  energy functional $Q$. We also recall criticality/subcriticality, null-sequences, and ground states.
\begin{definition}\label{efdfn}\emph{ The\emph{ energy functional~$Q$} is defined  as 
		\begin{align*}
Q[\phi]\triangleq Q[\phi;\Omega]&\triangleq Q_{p,\mathcal{A},V}[\phi]\\
&\triangleq\int_{\Omega}\left(H(x,\nabla \phi)^{p}+ V(x)|\phi|^{p}\right)\dx\\
&=\int_{\Omega}\left(\mathcal{A}(x,\nabla \phi)\cdot\nabla\phi+ V(x)|\phi|^{p}\right)\dx \qquad \phi\in\core.
\end{align*}
%\end{eqnarray*}
The functional~$Q$ is called \emph{nonnegative} in~$\Omega$ if for all~$\phi\in \core$,~$Q[\phi]\geq 0,$ and in this case, we write~$Q\geq 0$ in~$\Omega$.}
		\end{definition}
%\subsubsection{Hardy-weights, and~$Q$-capacity}
  \bremark
 \emph{When it comes to the nonnegativity of~$Q$, by an approximation argument (see \cite[Lemma 4.18]{HPR}), our test functions can also belong to~$W^{1,p}(\Omega)\cap C_{c}(\Omega)$ instead of~$\core$. In some cases, we may consider such test functions. Besides, when the integration region is replaced by an arbitrary measurable subset~$\Gg$ of~$\Omega$, we also write the functional as~$Q_{p,\mathcal{A},V}[\phi;\Gg]$ or~$Q[\phi;\Gg]$.}
  \eremark
  \textbf{  Throughout the paper, unless otherwise stated, we invariably assume that the functional~$Q\geq 0$ in~$\Omega$.}
  
The following lemma can be easily verified.
\blemma
All Hardy-weights of~$Q$ in $\Gw$ form a linear space, which is denoted by~$\mathcal{H}_{p,\mathcal{A},V}(\Omega)$ or~$\mathcal{H}(\Omega)$.
\elemma
%Now we recall criticality/subcriticality.
\begin{Def}[{Criticality/Subcriticality \cite[Definition 6.3]{HPR}}]
  	\ {\em 
  	   	  \begin{itemize}
    \item If~$Q$ is nonnegative in~$\Omega$ and~$\mathcal{H}(\Omega)=\{0\}$, % such that
   % \begin{equation*}\label{subcritical}
   % Q_{p,\mathcal{A},V}[\varphi]\geq \int_{\Omega}W|\varphi|^{p}\dx,
   % \end{equation*}
   % for all $\varphi\in C^{\infty}_{c}(\Omega)$, 
the functional $Q$ is called \emph{critical} in $\Gw$;
    \item if $Q$ is nonnegative in~$\Omega$ and~$\mathcal{H}(\Omega)\neq\{0\}$, the functional $Q$ is called  \emph{subcritical} in $\Gw$;
    \item if there exists~$\phi\in C^{\infty}_{c}(\Omega)$ such that $Q[\phi]<0$, $Q$ is called \emph{supercritical} in $\Gw$.
  \end{itemize}
}
\end{Def}
The following definition of null-sequences requires continuity, which is superficially different from \cite[Definition 6.4]{HPR}. For an explanation, see Remark \ref{nullrem} below. %Actually by \cite[Lemma 6.20]{HPR}, under Assumptions \ref{ass9}, \ref{ass2}, and \ref{ngradb}, a critical functional even has a null-sequence in~$\core$.
\begin{Def}
   \emph{A nonnegative sequence~$\{\phi_{k}\}_{k\in\mathbb{N}}\subseteq W^{1,p}(\Omega)\cap C_{c}(\Gw)$ is called a \emph{null-sequence} with respect to the nonnegative functional~$Q$ in~$\Omega$ if
    \begin{itemize}
      \item there exists a fixed open set~$U\Subset\Omega$ such that~$\Vert \phi_{k}\Vert_{L^{p}(U)}=1$ for all~$k\in\mathbb{N}$;
      \item $\displaystyle{\lim_{k\rightarrow\infty}}Q[\phi_{k}]=0.$
    \end{itemize}}
  \end{Def}
 \begin{Def}%[{\cite[Definition 6.5]{HPR}}]
  \emph{A \emph{ground state} of the nonnegative functional~$Q$ is a positive  function~$\phi\in W^{1,p}_{\loc}(\Omega)$ which is an~$L^{p}_{\loc}(\Omega)$ limit of a null-sequence.} 
  \iffalse
  \Hmm{This definition of ground states is a verbatim copy of the definition in our paper. Is it OK?}
\fi%in~$L^{p}_{\loc}(\Omega)$.}
  \end{Def}
  \subsection{The Finsler~$p$-Laplace equation with a potential}
  In this subsection, we introduce the main equation and two important assumptions in criticality theory. Inter alia, we present a theorem concerning gradient estimates, a  simple lemma concerning null-sequences and ground states, and the Agmon-Allegretto-Piepenbrink (AAP) theorem.
  \begin{Def}\label{def_sol}%[{\cite[Definition 2.17]{HPR}}]
			{\em  %,~$\mathcal{A}(x,\xi)=\nabla_{\xi}F(x,\xi)=\nabla_{\xi}(\frac{1}{p}H(x,\xi)^{p})$ for almost all~$x\in\Omega$,
A function~$v\in W^{1,p}_{\loc}(\Omega)$ is a {\em (weak) solution} of the equation
				\begin{equation*}\label{half}
					Q'[u]\triangleq Q'_{p,\mathcal{A},V}[u]\triangleq -\dive\mathcal{A}(x,\nabla u)+V|u|^{p-2}u=0,
				\end{equation*}
				in~$\Omega$ if for all~$\phi \in C_{c}^{\infty}(\Omega)$,$$\int_{\Omega}\mathcal{A}(x,\nabla v)\cdot \nabla \phi\dx+\int_{\Omega}V|v|^{p-2}v \phi\dx=0.$$
    A function~$v\in W^{1,p}_{\loc}(\Omega)$ is a {\em (weak) supersolution} of the above equation
				in~$\Omega$ if $$\int_{\Omega}\mathcal{A}(x,\nabla v)\cdot \nabla \phi\dx+\int_{\Omega}V|v|^{p-2}v \phi\dx\geq 0 \quad \mbox{ for all nonnegative } \phi \in C_{c}^{\infty}(\Omega).$$ A function $v\!\in \!W^{1,p}_{\loc}(\Omega)$ is a {\em (weak) subsolution} of $Q'[u]=0$
				in~$\Omega$ if~$-v$ is a supersolution.	}		\end{Def}
\bremark\label{Finslerr}
\emph{By an approximation argument, the test function space can expand into~$W^{1,p}_{c}(\Omega)$ (see \cite[Lemma 4.20]{Hou}). For~$V=0$, the equation~$-\dive \mathcal{A}(x,\nabla u)=0$ is called the \emph{Finsler~$p$-Laplace equation} \cite{Combete}.
 %(see \cite{Combete}).
 See \cite{Chern,Shen} for a treatment of the Finsler geometry and its relationship to the equation $-\dive \mathcal{A}(x,\nabla u)=0$.}
 \iffalse
 \emph{When the the norm family $H$ does not depend on $x$ (up to a Lebesgue null set), the equation~$-\dive \mathcal{A}(\nabla u)=0$ is also called the \emph{anisotropic~$p$-Laplace equation} (see \cite{Jaros}).  The restriction~$\tau$ of a general norm~$H$ to the standard Euclidean unit sphere is called a \emph{surface tension} \cite[p. 173]{Cerf}. % We call the norm~$H$ \emph{isotropic} (respectively, \emph{anisotropic}) if the surface tension is constant (respectively, non-constant).
 In particular, the \emph{Wulff theorem}, is the anisotropic version of the isoperimetric inequality and related to a general norm \cite{Cerf} (see also \cite{Xianew} and references therein). Norms on~$\R^{n}$ are inextricably linked to convex bodies (see \cite{Schneider} where some connections to the Finsler geometry are also developed). }
 \fi
\eremark
The following two assumptions are two alternatives to \cite[Assumptions 2.8 and 6.1]{HPR}.% and still imply the validity of %are designed for the criticality theory in \cite[Sections 6 and 7]{HPR}. % (see 
%\cite[Assumptions 2.8 and 6.1]{HPR}). Assumption \ref{ass2} is a generalization of \cite[Assumption 2.8]{HPR}.
%One can check (see the proofs of \cite[Proposition 6.8]{HPR} and \cite[Proposition 4.11]{PPAPDE}) that
%the criticality theory in \cite[Sections 6 and 7]{HPR}. %still holds true with the new assumptions.
\begin{assumption}\label{ass2}
	{\em For some locally bounded measurable function~$\tilde{\delta}:\Omega\rightarrow [2,\infty)$ %either~$\delta_{0}\leq p$ a.e.~in~$\Omega$ or~$\delta_{0}>p$ and 
and every domain $\gw\Subset \Gw$, there exists a positive constant $C_{\omega}$ (independent of~$x,\xi$, or~$\eta$) such that for almost all~$x\in \gw$ and all~$\xi,\eta\in \R^n$,	$$|\xi+\eta|^{p}_{\mathcal{A}}-|\xi|^{p}_{\mathcal{A}}-p\mathcal{A}(x,\xi)\cdot\eta\geq C_{\omega}|\eta|^{\tilde{\delta}}_\mathcal{A}(|\eta|_{\mathcal{A}}+|\xi|_{\mathcal{A}})^{p-\tilde{\delta}}.$$
		\iffalse
		where
		\begin{equation*}\label{eq_strict}
			[\xi,\eta]_{\mathcal{A}}\triangleq
			\begin{cases}
				|\eta|^{p}_\mathcal{A}&\mbox{if $p\geq 2$,}\\
				|\eta|^{2}_\mathcal{A}(|\xi|_{\mathcal{A}}+|\eta|_{\mathcal{A}})^{p-2}&\mbox{if $1<p<2$.}
			\end{cases}
		\end{equation*}
		\fi
	}
\end{assumption}
\iffalse
\bremark
\emph{Note that the constant~$C(\omega,p,n,\mathcal{A})$ does not depend on~$x,\xi$ or~$\eta$. In the whole paper, if and only if we write~$C(p_{1},p_{2},\ldots,p_{l})$ or~$c(p_{1},p_{2},\ldots,p_{l})$ for some parameters~$p_{1},p_{2},\ldots,p_{l}$, we always exhaust all the potential dependence parameters.}
\eremark
\fi
\begin{assumption}\label{ngradb}	\emph{In addition to Assumption~\ref{ass9} and  Assumption \ref{ass2}, we assume that if $1<p<\tilde{\delta}$ over a set with positive measure, then~$H$ and $V$ are so regular that positive solutions of the equation~$Q'_{p,\mathcal{A},V-W}[u]=0$ in all subdomains~$\Omega'$ of~$\Omega$, for all $W\in L^{\infty}_{\loc}(\Omega')$,  are in $W^{1,\infty}_{\loc}(\Omega')$, and positive solutions of $Q'[u]=f$ in~$\Omega$ for $0\leq f\in \core\setminus\{0\}$  are in $W^{1,\infty}_{\loc}(\Omega)$.} 
\end{assumption}
\bremark
\emph{Note that solutions of~$Q'[u]=f$ in~$\Omega$ are defined in the natural way (see \cite[Definition 2.17]{HPR}). 
For certain gradient estimates, see \cite{Dong,Mingione,PPAPDE} and references therein. See also Theorem \ref{gethm} and Example \ref{gbexa}. %In this assumption, the underlying equations include not only~$Q'[u]=0$ but also perturbations by a locally bounded potential. 
Moreover, if $1<p<\tilde{\delta}$ over a set with positive measure, we only consider positive supersolutions in~$W^{1,\infty}_{\loc}$ when it comes to the criticality theory in \cite[Section 6]{HPR}.}%\Hmm{Example \ref{gbexa} involves~$(s,a)$-norm. Hence I put it below.}
\eremark
\bremark\label{nullrem}
\emph{By \cite[Lemma 6.20]{HPR}, under Assumptions \ref{ass9}, \ref{ass2}, and \ref{ngradb}, a critical functional has a null-sequence in~$\core$.} 
\eremark
The following theorem is easily concluded from \cite[Theorem 5.3]{Lieberman93} which provides~$C^{1,\alpha}$ estimates for certain quasilinear equations with measure data. Recall that solutions of the relevant equations in the following theorem are locally H\"older continuous, which is due to \cite[Theorem 4.11]{Maly97} for~$p\leq n$ and the Sobolev embedding theory for~$p>n$,  and hence locally bounded.
\btheorem\label{gethm} 
Suppose that for all~$x\in\Omega$,~$\mathcal{A}(x,\cdot)\in C^{1}(\R^{n}\setminus\{0\})$ and that for every domain~$\omega\Subset\Omega$, there exist positive constants~$\vartheta_{\omega}\leq 1$,~$\Lambda_{1,\omega}$,~$\Lambda_{2,\omega}$, and~$\Lambda_{3,\omega}$ such that for all~$x,y\in\omega$, all~$\xi\in\R^{n}\setminus\{0\}$, and all~$\eta\in\R^{n}$,
\begin{align*}
D_{\xi}\mathcal{A}(x,\xi)\eta\cdot\eta\geq \Lambda_{1,\omega}|\xi|^{p-2}|\eta|^{2},\quad |D_{\xi}\mathcal{A}(x,\xi)|\leq \Lambda_{2,\omega}|\xi|^{p-2},
\end{align*}
and \begin{align*}
|\mathcal{A}(x,\xi)-\mathcal{A}(y,\xi)|\leq \Lambda_{3,\omega}|\xi|^{p-1}|x-y|^{\vartheta_{\omega}}.
\end{align*} In addition, assume that~$V\in \widehat{M}^{q}_{\loc}(p;\Omega)$. Then in all subdomains~$\Omega'$ of~$\Omega$, solutions of the equations $Q'_{p,\mathcal{A},V-W}[u]=0$ for all~$W\in L^{\infty}_{\loc}(\Omega')$ and~$Q'[u]=f$ for all~$f\in C_{c}^{\infty}(\Omega')$ are in~$W^{1,\infty}_{\loc}(\Omega')$.
\etheorem
%For the following lemma, we use Assumptions \ref{ass2} and \ref{ngradb}. In particular, here we do not invoke Assumption \ref{cuc} and we also do not require that the functional is subcritical. 
The proof of the following lemma is similar to that of \cite[Lemma 5.5]{Das} and hence omitted.%\Hmm{For the proof, see [V2, Lemma 4.24].}
\blemma\label{inull}
Suppose that Assumptions \ref{ass2} and \ref{ngradb} hold. Let~$\{\phi_{k}\}_{k\in\mathbb{N}}\subseteq W^{1,p}(\Omega)\cap C_{c}(\Omega)$ be a null sequence of~$Q$ converging in~$L^{p}_{\loc}(\Omega)$ to a ground state~$\Phi\in W^{1,p}_{\loc}(\Omega)\cap C(\Omega)$. %which is also a supersolution of the equation~$Q'[v]=0$.
Then~$\{\hat{\phi}_{k}\triangleq \min\{\phi_{k},\Phi\}\}_{k\in\mathbb{N}}\subseteq W^{1,p}(\Omega)\cap C_{c}(\Omega)$ %is still a null-sequence of~$Q$ converging to~$\Phi$ in~$L^{p}_{\loc}(\Omega)$.
and~$\lim_{k\rightarrow\infty}Q[\hat{\phi}_{k}]=0$. Furthermore, up to a subsequence,~$\{\hat{\phi}_{k}\}_{k\in\mathbb{N}}$ converges to~$\Phi$ a.e.~in~$\Omega$ and in~$L^{p}_{\loc}(\Omega)$, and~$\lim_{k\rightarrow\infty}\Vert\hat{\phi}_{k}\Vert_{L^{p}(U)}=1$ for some open set~$U\Subset\Omega$.
\elemma
Let us review the Agmon-Allegretto-Piepenbrink (AAP) theorem which will be used in Lemma \ref{crucial}.
\btheorem[{The AAP theorem \cite[Theorem 5.3]{HPR}}]
The functional $Q$ is nonnegative in $\Gw$ if and only if the  equation $Q'[u]=0$ in $\Gw$ admits a positive (super-)solution.
\etheorem
\subsection{The $Q$-capacity and the~$Q$-Hardy constant}
In this subsection, we define a $Q$-capacity and the~$Q$-Hardy constant. We also enumerate some fundamental properties of the $Q$-capacity and consider two norms on~$\mathcal{H}(\Omega)$. % We also define the~$Q$-Hardy constant. 
Then we recall positive solutions of minimal growth and discuss a necessary condition for Hardy-weights. We first give a useful notation.
\iffalse
\bnotation
\emph{The space of all Hardy-weights of~$Q$ in $\Gw$ is denoted by~$\mathcal{H}(\Omega)$.}
\enotation
\fi

\bnotation
\emph{Fix a positive function~$u\in W^{1,p}_{\loc}(\Omega)\cap C(\Omega)$. For every compact $K\subseteq\Omega$, let~$$\mathcal{N}(K,u)\triangleq\{\phi\in W^{1,p}\cap C_{c}(\Omega)~|~\phi|_{K}\geq u\}.$$}
\enotation
The following definition is an extension of \cite[Definition 1.1]{Das}.
\begin{definition}[$Q$-capacity]\label{pavcap}
\emph{Fix a positive function~$u\!\in\!W^{1,p}_{\loc}(\Omega)\cap C(\Omega)$. For every compact subset~$K$ of $\Omega$, we define the \emph{$Q$-capacity} of~$K$ with respect to~$(u,\Omega)$ as 
\begin{align*}
\capacity(K,u)\triangleq\capacity(K,u,\Omega)\triangleq\capacity_{Q}(K,u) \triangleq\capacity_{p,\mathcal{A},V}(K,u)\triangleq \inf\big\{Q[\phi]~\big|~\phi\in \mathcal{N}(K,u)\big\}.
\end{align*}
}
\end{definition}
\bremark
\emph{When~$u=1$, the~$Q$-capacity coincides with the~$(\mathcal{A},V)$-capacity in \cite{HPR}.}
\eremark
Following \cite[Remarks 2.5 and 3.1 and Proposition 3.7]{Das}, we present some properties of the~$Q$-capacity. The proofs are analogous to the counterparts in \cite{Das} and hence omitted. %\Hmm{For the proof, see [V2, Lemma 2.25].}
\blemma\label{hfund}
%Fix a positive function~$u\!\in\!W^{1,p}_{\loc}(\Omega)\cap C(\Omega)$.% For a compact subset~$K$ in $\Omega$,
\begin{enumerate}
    \item Suppose that~$\Omega_{1}\subseteq\Omega_{2}$ are both subdomains of~$\R^{n}$, that the norm family~$H$ satisfies Assumption \ref{ass9} in~$\Omega_{2}$, and that the potential~$V\in M^{q}_{\loc}(p;\Omega_{2})$. Fix a positive function~$u\!\in\!W^{1,p}_{\loc}(\Omega_{2})\cap C(\Omega_{2})$.  Then~$\capacity(K,u,\Omega_{2})\leq \capacity(K,u,\Omega_{1})$ for every compact~$K\subseteq\Omega_{1}\subseteq\Omega_{2}$.
    
    Next we fix a positive function~$u\!\in\!W^{1,p}_{\loc}(\Omega)\cap C(\Omega)$.
    \item If~$K_{1}\subseteq K_{2}$ are both compact subsets of~$\Omega$, then~$\capacity(K_{1},u)\leq \capacity(K_{2},u)$. 
    \item For all positive constants~$\alpha$ and all compact subsets~$K$ of~$\Omega$,$$\capacity(K,\alpha u)=\alpha^{p}\capacity(K,u).$$
\item For every compact subset~$K$ of~$\Omega$,~$\capacity(K,u)=0$ if and only if~$\capacity(K,1)=0$.
    \item For every compact~$K\subseteq\Omega$, let~$$\mathcal{N}'(K,u)\triangleq\{\psi~\mbox{is measurable}~|~\psi u\in W^{1,p}(\Omega)\cap C_{c}(\Omega),\mbox{and}~ \psi|_{K}\geq 1\}.$$
  Then  $$\capacity(K,u)=\inf\left\{Q[\psi u]~|~\psi\in\mathcal{N}'(K,u)\right\}.$$
  \item Assume that~$u$ is also a supersolution of the equation~$Q'[v]=0$ in~$\Omega$. Let~$K\subseteq\Omega$ be compact. Then
$$\capacity(K,u)=\inf\left\{Q[\phi]~|~\phi\in\mathcal{N}''(K,u)\right\},$$
where$$\mathcal{N}''(K,u)\triangleq\mathcal{N}_{p,\mathcal{A},V}''(K,u)\triangleq\{\phi\in W^{1,p}\cap C_{c}(\Omega)~|~\phi|_{K}=u, \mbox{and}~0\leq\phi\leq u\}.$$
%holds true.
\item Suppose, in addition to Assumptions \ref{ass9}, that there exists a positive constant~$\kappa_{\Omega}$ such that~$\kappa_{\Omega}|\xi|\leq H(x,\xi)$ for all~$\xi\in\R^{n}$ and almost all~$x\in\Omega$. When~$V$ is nonnegative in~$\Omega$ and~$1<p<n$, there exists a positive constant~$C$ such that for all compact subsets~$K$ of~$\Omega$, $$\vol(K)^{p/p^{*}}\leq C\capacity(K,1),$$
 and for all~$g\in L^{n/p}(\Omega)$,
 $$\int_{K}|g|\dx\leq C\capacity(K,1)\Vert g\Vert_{L^{n/p}(\Omega)}.$$
\item Suppose, in addition to Assumptions \ref{ass9}, that Assumptions \ref{ass2} and \ref{ngradb} hold. Then the functional~$Q$ is critical in~$\Omega$ if and only if~$\capacity(K,u)=0$ for all (some) compact subsets~$K$ of~$\Omega$ such that~$\vol(K)>0$. Furthermore, under all the conditions in (7),~$Q$ is subcritical in~$\Omega$.
\end{enumerate}
\elemma
\iffalse
\bremark
\emph{Note that any extra condition within an item does not apply to any other item.}
\eremark
\fi
Now we define the $Q$-Hardy constant~$S_{g}$,~$\Vert g\Vert_{\mathcal{H}}$, and~$\Vert g\Vert_{u}$ for all~$g\in\mathcal{H}(\Omega)$ even though we also consider some larger function spaces.
\begin{definition}\label{hardy_weight_norm}
\begin{enumerate}
\item [\emph{(1)}] \emph{Let~$\mathcal{M}(\Omega)$ be the linear space of all measurable functions on~$\Omega$. For every $g\in\mathcal{M}(\Omega)$, we define: 
\begin{equation*}
S_{g}\triangleq S_{g}(\Omega)\triangleq\inf\bigg\{\frac{Q[\phi]}{\int_{\Omega}|g||\phi|^{p}\dx}~\bigg|~\phi\in W^{1,p}(\Omega)\cap C_{c}(\Omega) \mbox{~and} \int_{\Omega}|g||\phi|^{p}\dx>0\bigg\}
\end{equation*}
%Moreover, we denote~$\Vert g\Vert_{\mathcal{H}}\triangleq \Vert g\Vert_{\mathcal{H}}\triangleq B_{g,\mathcal{A},V}\triangleq(S_{g})^{-1}$.
and~$\Vert g\Vert_{\mathcal{H}}\triangleq(S_{g})^{-1}$ (possibly~$\infty$}). For every~$g\in\mathcal{H}(\Omega)$,~$S_{g}$ is called the \emph{$Q$-Hardy constant or simply, the \emph{Hardy constant}.} 
\item[\emph{(2)}]\emph{Fix a positive~$u\in W^{1,p}_{\loc}(\Omega)\cap C(\Omega)$. For every $g\in L^{1}_{\loc}(\Omega)$, let
	$$\Vert g\Vert_{u}\triangleq \inf\bigg\{L\in [0,\infty)~\bigg|~\int_{K}|g||u|^{p}\dx\leq L\capacity(K,u)~\mbox{for all compact}~K\subseteq\Omega\bigg\},$$
where $\Vert g\Vert_{u}$ may be~$\infty$.}
\iffalse
\item [$(2)$]\emph{Fix a positive~$u\in W^{1,p}_{\loc}(\Omega)\cap C(\Omega)$. For every $g\in L^{1}_{\loc}(\Omega)$\Hmm{We may define~$\Vert g\Vert_{u}$ for all~$g\in L^{1}_{\loc}(\Omega)$ even though it may be~$\Vert g\Vert_{u}=\infty$. But for~$g\in\mathcal{H}(\Omega)$, we have~$\Vert g\Vert_{u}<\infty$.}}, let
	$$\Vert g\Vert_{u}\triangleq \sup\bigg\{\frac{\int_{K}|g||u|^{p}\dx}{\capacity(K,u)}~\bigg|~K\subseteq\Omega~\mbox{is compact }  \mbox{such that}~\capacity(K,u)\neq 0\bigg\}.$$
\fi
\end{enumerate}
\end{definition}
%\edefinition
The lemma below easily follows from Definition \ref{hardy_weight_norm}. Thus the proof is omitted.
\blemma\label{hcrem}
\iffalse
$$S_{g}=\inf\bigg\{\frac{Q[\phi]}{\int_{\Omega}|g||\phi|^{p}\dx}~\bigg|~\phi\in W^{1,p}(\Omega)\cap C_{c}(\Omega) \mbox{~and} \int_{\Omega}|g||\phi|^{p}\dx>0\bigg\},$$
and
\fi
\begin{enumerate}
\item For all~$g\in\mathcal{M}(\Omega)$,
$$\Vert g\Vert_{\mathcal{H}}=\sup\bigg\{\frac{\int_{\Omega}|g||\phi|^{p}\dx}{Q[\phi]}~\bigg|~\phi\in W^{1,p}(\Omega)\cap C_{c}(\Omega) \mbox{~and} \int_{\Omega}|g||\phi|^{p}\dx>0\bigg\}.$$
\item For all~$g\in\mathcal{H}(\Omega)$, we have~$0<S_{g}\leq\infty$,~$0\leq \Vert g\Vert_{\mathcal{H}}<\infty$, and
\begin{align*}
%S_{g,\mathcal{A},V}\triangleq S_{g,\mathcal{A},V}(\Omega)\triangleq 
S_{g}=\inf\bigg\{Q[\phi]~\bigg|~\phi\in W^{1,p}(\Omega)\cap C_{c}(\Omega) \mbox{~and} \int_{\Omega}|g||\phi|^{p}\dx=1\bigg\}.
\end{align*}
\item It holds that~$\mathcal{H}(\Omega)=\{g\in\mathcal{M}(\Omega)\,|\,\Vert g\Vert_{\mathcal{H}}<\infty\}=\{g\in L^1_\loc(\Omega)\,|\,\Vert g\Vert_{\mathcal{H}}<\infty\}$.
\item For all~$g\in L^{1}_{\loc}(\Omega)$,$$\frac{1}{\Vert g\Vert_{u}}=\sup\bigg\{\ell\in (0,\infty]~\bigg|~\ell \int_{K}|g||u|^{p}\dx\leq\capacity(K,u)~\mbox{for all compact}~K\subseteq\Omega\bigg\}.$$
\end{enumerate}
\elemma
Motivated by \cite[pp.~333--334]{Das}, we discuss the functions $\Vert \cdot\Vert_{\mathcal{H}}$ and $\Vert \cdot\Vert_{u}$ in the following lemma. %\Hmm{See [V2, Lemma 2.30] for the proof.}% See also \cite[pp. 2--3]{Das}.
\blemma\label{bhome}
\begin{enumerate}
\item The map $\Vert \cdot\Vert_{\mathcal{H}}:\mathcal{M}(\Omega)\rightarrow[0,\infty]$ 
is positive definite and homogeneous and satisfies the triangle inequality. Moreover,~$\Vert \cdot\Vert_{\mathcal{H}}$ is a norm on~$\mathcal{H}(\Omega)$.
\item %\Hmm{When proving positive definiteness, we need Lemma \ref{hfund} (8) which requires the two assumptions.}Suppose, in addition to Assumptions \ref{ass9}, that Assumptions \ref{ass2} and \ref{ngradb} hold.
For every positive~$u\in W^{1,p}_{\loc}(\Omega)\cap C(\Omega)$, the map $\Vert \cdot\Vert_{u}:L^{1}_{\loc}(\Omega)\rightarrow[0,\infty]$ is positive definite and homogeneous and satisfies the triangle inequality. Moreover, $\Vert g\Vert_{u}\leq \Vert g\Vert_{\mathcal{H}}$ for all~$g\in\mathcal{H}(\Omega)$. Furthermore, $\Vert \cdot\Vert_{u}$ is a norm on~$\mathcal{H}(\Omega)$.
\iffalse
\item [(2)] For all~$\lambda\in\R$,~$B_{\lambda g}=|\lambda|\Vert g\Vert_{\mathcal{H}}$.
\item [(3)] For all~$g_{1},g_{2}\in\mathcal{H}(\Omega)$,~$B_{g_{1}+g_{2}}\leq B_{g_{1}}+B_{g_{2}}$.
\fi
\end{enumerate}
\elemma
\bproof
(1): The proof is easy and hence omitted.

(2):
\iffalse\Hmm{I just say that I imitate the proof in \cite[Theorem 1.2]{Das} and the proof is not my original invention.} 
%The proof for~$\Vert g\Vert_{u}<\infty$ is similar to the counterpart in \cite[Theorem 1.2]{Das}.
\fi
Suppose that~$g\in\mathcal{H}(\Omega)$.
The proof for~$\Vert g\Vert_{u}<\infty$ is similar to the counterpart in \cite[Theorem 1.2]{Das}. Let~$\psi$ be an arbitrary measurable function such that~$\psi u\in W^{1,p}(\Omega)\cap C_{c}(\Omega)$ and~$\psi\geq 1$ on~$K$ for a fixed compact subset~$K$ of~$\Omega$. Then
$$\int_{K}|g||u|^{p}\dx\leq \int_{\Omega}|g||\psi u|^{p}\dx\leq\Vert g\Vert_{\mathcal{H}}Q[\psi u].$$ It follows that
$$\int_{K}|g||u|^{p}\dx\leq \Vert g\Vert_{\mathcal{H}}\capacity_{Q}(K,u).$$ Furthermore, $\Vert g\Vert_{u}\leq \Vert g\Vert_{\mathcal{H}}<
\infty$. The other conditions can be easily proved.
\iffalse
\Hmm{I do not think that the proof for the triangle inequality for~$\Vert g\Vert_{\mathcal{H}}$ is trivial.} It can be easily checked that~$\Vert g\Vert_{\mathcal{H}}$ is positive definite and homogeneous with regard to~$g$. Take two arbitrary~$g_{1}, g_{2}\in \mathcal{H}(\Omega)$ with~$\Vert g_{i}\Vert_{\mathcal{H}}\leq 1$ for~$i=1,2$, and~$\lambda\in (0,1)$. Then~$S_{g_{i}}\geq 1$ for~$i=1,2$. By \cite[Theorem 1.9]{Simon}, it suffices to show that~$\Vert \lambda g_{1}+(1-\lambda)g_{2}\Vert_{\mathcal{H}}\leq 1$. Suppose the contrary. Then~$S_{\lambda g_{1}+(1-\lambda)g_{2}}<1$. Thus there exists a function~$\phi\in W^{1,p}(\Omega)\cap C_{c}(\Omega)$ such that~$Q[\phi]<1$ and~$\int_{\Omega}|\lambda g_{1}+(1-\lambda)g_{2}||\phi|^{p}\dx=1$. Plainly, we get$$1\leq \lambda\int_{\Omega}| g_{1}||\phi|^{p}\dx+(1-\lambda)\int_{\Omega}| g_{2}||\phi|^{p}\dx.$$
Consequently, it is prohibited that~$\int_{\Omega}| g_{i}||\phi|^{p}\dx<1$ for both~$i$. If~$\int_{\Omega}| g_{i}||\phi|^{p}\dx=1$ for some~$i$, then~$S_{g_{i}}<1$, which contradicts~$\Vert g_{i}\Vert_{\mathcal{H}}\leq 1$. If~$\int_{\Omega}| g_{i}||\phi|^{p}\dx>1$ for some~$i$, let~$\psi\triangleq \phi/(\int_{\Omega}| g_{i}||\phi|^{p}\dx)^{1/p}$. Then~$\int_{\Omega}| g_{i}||\psi|^{p}\dx=1$. However, $$Q[\psi]=\frac{Q[\phi]}{\int_{\Omega}| g_{i}||\phi|^{p}\dx}<1.$$ Contradiction!
\fi
\eproof
Now we aim to obtain the completeness of~$(\mathcal{H}(\Omega),\Vert\cdot\Vert_{\mathcal{H}})$. 
\bdefinition[{\cite[Section 30]{Bfs61}}]\label{Bfs}%\Hmm{Is it OK for me to use '\S'?}
\emph{Let~$(\mathcal{M}(\Omega),\Vert\cdot\Vert)$ 
	(here $\Vert\cdot\Vert$ can take~$\infty$) be a normed linear space. Suppose that
\begin{itemize}
    \item for all~$f\in \mathcal{M}(\Omega)$,~$\Vert f\Vert=\Vert |f|\Vert$;
    \item for every nonnegative sequence~$\{f_{k}\}_{k\in\mathbb{N}}\subseteq \mathcal{M}(\Omega)$ increasing to~$f$,~$\{\Vert f_{k}\Vert\}_{k\in\mathbb{N}}$ increases to~$\Vert f\Vert$.
\end{itemize}
Then~$\mathcal{M}'(\Omega)\triangleq\{f\in\mathcal{M}(\Omega)~|~\Vert f\Vert<\infty\}$ is called a \emph{Banach function space}.
 }
\edefinition
\blemma[{\cite[Section 30, Theorem 2]{Bfs61}}]\label{Bfbs}
The normed linear space~$\mathcal{M}'(\Omega)$ in Definition \ref{Bfs} is a Banach space.
\elemma
\iffalse
\bcorollary\label{pshsp}
Suppose that $1<p,s<\infty$, and let~$H(\xi)=|\xi|_{s,a}$. 
%and let~$\mathcal{A}(x,\xi)=\nabla_{\xi}F(x,\xi)=\nabla_{\xi}(\frac{1}{p}H_{p}(x,\xi)^{p})$ for almost all~$x\in\Omega$.
Fix a positive solution~$u\in W^{1,p}_{\loc}(\Omega)\cap C(\Omega)$ of the equation~$Q_{p,s,a,V}'[v]=0$ in $\Gw$, and assume that~$Q_{p,s,a,V}$ is subcritical. 
Suppose, in addition to Assumptions \ref{ass9}, that Assumption \ref{ngradb} holds. Then~$(\mathcal{H}_{p,s,a,V}(\Omega),\Vert\cdot\Vert_{u})$ is a Banach function space and in particular, a Banach space.
\ecorollary
\fi
The following corollary follows immediately from Lemma \ref{hcrem} (1) and (3) and Lemma \ref{Bfbs}. See also \cite[p.~333]{Das}.
\bcorollary\label{pshsp}
The normed linear space~$(\mathcal{H}(\Omega),\Vert\cdot\Vert_{\mathcal{H}})$ of Hardy-weights is a Banach function space and hence a Banach space.
\ecorollary
We define smooth and Lipschitz  exhaustions of a domain, where `smooth' means~$C^{1}$. 
\bdefinition
\emph{A \emph{smooth (Lipschitz) exhaustion} of~$\Omega$ is a sequence of smooth (Lipschitz) domains~$\{\omega_{k}\}_{k\in\mathbb{N}}$ such that for all~$k\in\mathbb{N}$,~$\omega_{k}\Subset\omega_{k+1}\Subset\Omega$ and~$\cup_{k\in\mathbb{N}}\omega_{k}=\Omega$.}
\edefinition
We recall the definition of positive solutions of minimal growth.
 \begin{Def}\label{dfnmg}%[{\cite[Definition 7.3]{HPR}}]
 \emph{%Let $\mathcal{A}$ satisfy Assumption~\ref{ass8} and let $V\in M^{q}_{\loc}(p;\Omega)$. 
 Let $K_{0}$ be a compact subset of~$\Omega$ such that~$\Omega\setminus K_{0}$ is a domain. A positive solution~$u$ of~$Q'[w]=0$ in~$\Omega\setminus K_{0}$ is called a \emph{positive solution of minimal growth in a neighborhood of infinity} in $\Omega$ if for all compact subsets~$K$ of~$\Omega$ such that~$K_{0}\Subset \mathring{K}\neq\emptyset$ and~$\omega_{i}\setminus K$ is a Lipschitz domain for some Lipschitz exhaustion~$\{\omega_{i}\}_{i\in\mathbb{N}}$ of~$\Omega$ and all~$i\in\mathbb{N}$, and all positive solutions~$v\in C\left(\Omega\setminus \mathring{K}\right)$ of~$Q'[w]=g$ in~$\Omega\setminus K$ satisfying~$u\leq v$ on~$\partial K$, where $g\in M^{q}_{\loc}(p;\Omega\setminus K)$ is nonnegative, it holds that~$ u\leq v$ in~$\Omega\setminus K$. % For such a positive solution~$u$, we write~$u\in \mathcal{M}_{\Omega;K_{0}}=\mathcal{M}_{\Omega;K_{0}}^{\mathcal{A},V}$.
 If~$K_{0}=\emptyset$, then $u$ is called a \emph{global minimal positive solution} of~$Q'[w]=0$ in~$\Omega$.}
  \end{Def}
  The following theorem, a counterpart of \cite[Proposition 3.4]{Das}, provides a necessary condition for Hardy-weights. %sheds some light on the scale of Hardy-weights. 
  See also \cite[Theorem 7.9]{HPR} and \cite{Kovarik}.%and \cite[Theorem 3.1]{Kovarik}.
\btheorem\label{mdecay}
Suppose, in addition to Assumptions \ref{ass9}, that Assumptions \ref{ass2} and\vspace{1mm} \ref{ngradb} hold and $V\in M^{q}_{\loc}(p;\Omega)$. Let~$K$ be a compact subset of~$\Omega$ with nonempty interior such that~$\Omega\setminus K$ is a domain, and~$u\in W^{1,p}_{\loc}(\Omega\setminus K)$ be a positive solution of~$Q'[v]=0$ of minimal growth in a neighborhood of infinity in~$\Omega$. 

Then for all~$g\in\mathcal{H}(\Omega)$, and all compact subsets~$\mathcal{K}$ of~$\Omega$ such that~$K\Subset\mathring{\mathcal{K}}$ and~$\omega_{i}\setminus \mathcal{K}$ is a Lipschitz domain for some Lipschitz exhaustion~$\{\omega_{i}\}_{i\in\mathbb{N}}$ of~$\Omega$ and all~$i\in\mathbb{N}$,
$$\int_{\Omega\setminus \mathcal{K}}|g|u^{p}\dx<\infty.$$
\etheorem
  \iffalse
  \bremark
  \emph{For the existence of~$V_{K}$, see \cite[Proposition 6.18]{HPR}.}
  \eremark
  \fi
 \bproof
 The proof is similar to that of \cite[Proposition 3.4]{Das}. Obviously, we may assume that~$Q$ is subcritical in~$\Omega$. By \cite[Proposition 6.18]{HPR}, we may find a nonnegative function~$V_{K}\in C_{c}^{\infty}(\Omega)$ supported in~$\mathring{K}$ such that~$Q_{p,\mathcal{A},V-V_{K}}$ is critical in~$\Omega$. By Lemma \ref{hfund} (8),~$\capacity_{p,\mathcal{A},V-V_{K}}(G,\psi)=0$ for all compact sets~$G\subseteq\Omega$ with positive measure, where~$\psi$ is the ground state of~$Q_{p,\mathcal{A},V-V_{K}}$. \vspace{1mm}Let~$\{\omega_{k}'\}_{k\in\mathbb{N}}$ be a smooth exhaustion of~$\Omega$. Then for all~$k\in\mathbb{N}$, $\capacity_{p,\mathcal{A},V-V_{K}}\left(\overline{\omega_{k}'},\psi\right)=0$. Hence, for every~$k\in\mathbb{N}$, there exists~$\phi_{k}\in \mathcal{N}''_{p,\mathcal{A},V-V_{K}}\left(\overline{\omega_{k}'},\psi\right)$ such that~$Q_{p,\mathcal{A},V-V_{K}}[\phi_{k}]<1/k$. It follows that
 \begin{align*}
 \int_{\omega_{k}'}|g|\psi^{p}\dx= \int_{\omega_{k}'}|g|\phi_{k}^{p}\dx\leq CQ[\phi_{k}]&=CQ_{p,\mathcal{A},V-V_{K}}[\phi_{k}]+C\int_{\Omega}V_{K}\phi_{k}^{p}\dx\leq \frac{C}{k}+C\int_{K}V_{K}\psi^{p}\dx.
 \end{align*}
 The estimate~$\int_{\Omega}|g|\psi^{p}\dx<\infty$ follows when~$k$ goes to~$\infty$. %By \cite[Theorem 7.7]{HPR},~$\psi$ is a global minimal positive solution of~$Q'_{p,\mathcal{A},V-V_{K}}[v]=0$ in~$\Omega$. 
 As~$V_{K}$ is supported in~$\mathring{K}$,~$\psi$ is a positive solution of~$Q'[v]=0$ in~$\Omega\setminus K$. %in a neighborhood of infinity in~$\Omega$. 
 %We conclude from \cite[Theorem 7.7]{HPR} that~$Q$ is critical in~$\Omega\setminus K$.
 By virtue of the homogeneity of the equation,~$u\leq C'\psi$ in~$\Omega\setminus \mathcal{K}$ for some positive constant~$C'$. Thus~$\int_{\Omega\setminus \mathcal{K}}|g|u^{p}\dx<\infty$.
 \eproof
\section{The Bregman distances of~$|\cdot|^{p}_{s,a}$ and the Maz'ya-type characterization}\label{ready}
In this section, we first prove some two-sided estimates for the Bregman distances of~$|\cdot|^{p}_{s,a}$ (see Definition \ref{sanorm} below) and three corresponding simplified energies contingent on~$p=s$, $p>s$, or~$p<s$. Then we demonstrate a Maz'ya-type characterization concerning~$|\cdot|_{s,a}$ by virtue of these simplified energies. Finally, we also consider the  norm~$\sqrt[p]{|\cdot|^{p}_{s,a}+|\cdot|^{p}_{A}}$ and prove the Maz'ya-type characterization in this context.
\subsection{The Bregman distance}\label{bregmans}
In this subsection, we define the Bregman distances and discuss the differentiability of~$|\cdot|_{s,a}$ and~$|\cdot|^{p}_{s,a}$. 

For a continuously differentiable function~$f$ defined in~$\R$, the Bregman distance of~$f$ is the difference between~$f$ and its tangent line. See \cite{BR06} for more on the Bregman distances. %Bregman distances have extensive applications (see, e.g., \cite{BI00,BR06,Lindqvist,Pinliou}).
%\subsection{Introduction}
\begin{definition}\label{BREGMAN}
	\emph{Let~$f: X\rightarrow \mathbb{R}$ be a G\^{a}teaux differentiable convex function on a Banach space~$(X,\Vert\cdot\Vert)$, and denote by $T_x$ its G\^{a}teaux differential at $x\in X$. Then the function
		%\begin{enumerate}
		%({\cite[Page 1]{BR06}}) 
		%~$D_{f}(y,x)\triangleq f(y)-f(x)-f^{\circ}(x,y-x)$
		\begin{eqnarray*}
			D_{f}(y,x)\triangleq  f(y)-f(x)-T_{x}(y-x) \qquad (x,y)\in X \times X,
		\end{eqnarray*}
		is called the \emph{Bregman distance} of~$f$.}
\end{definition}
Next, we define the \emph{weighted $s$-norm}~$|\cdot|_{s,a}$ ($1<s<\infty$). %See also \cite[Example 2.11 and Lemma 2.12]{HPR}.
\bdefinition\label{sanorm}
\emph{For every~$1<s<\infty$, almost all~$x\in\Omega$, and all~$\xi\in\R^{n}$, let \bequationn
	|\xi|_{s,a}\triangleq \bigg(\sum_{i=1}^{n}a_{i}(x)|\xi_{i}|^{s}\bigg)^{1/s},
	\eequationn
	where~$a_{1},a_{2},\ldots,a_{n}$ are positive Lebesgue measurable functions defined in $\Omega$ and equivalent to~$1$ locally in~$\Omega$, and
	$a\triangleq(a_{1},a_{2},\ldots,a_{n})$.}
\edefinition
In the present paper, we consider the Bregman distances of~$|\xi|_{s,a}^{p}$ in~$\R^{n}$, namely
$$D_{|\cdot|_{s,a}^{p}}(\xi+\eta,\xi)=|\xi+\eta|_{s,a}^{p}-|\xi|_{s,a}^{p}-p|\xi|_{s,a}^{p-s}\sum_{i=1}^{n}a_{i}(x)\left| \xi_{i}\right|^{s-2}\xi_{i}\eta_{i}$$ for almost all~$x\in\Omega$ and all~$\xi,\eta\in\R^{n}$.

We recall the partial derivatives of~$|\cdot|_{s,a}$ whose calculation is nontrivial due to vanishing coordinates.% For the calculation, see \cite[pp. 55--56]{Coleman}.
\begin{lemma}[{\cite[pp. 55--56]{Coleman}}]\label{qnorm}
	\emph{ For all~$s\in (1,\infty)$ and almost all~$x\in\Omega$, $|\cdot |_{s,a}\in C^1(\mathbb{R}^{n}\setminus\{0\})$. Indeed, for all~$i=1,2,\ldots,n$, and all $\xi\in\mathbb{R}^{n}\setminus\{0\}$, 
		$$\partial_{i}|\xi|_{s,a}=a_{i}(x)|\xi|_{s,a}^{1-s}|\xi_{i}|^{s-2}\xi_{i}.$$
	}
\end{lemma}
With the continuous differentiability of~$|\cdot|_{s,a}$ in~$\R^{n}\setminus\{0\}$ at our disposal, we may readily obtain the continuous differentiability of~$|\cdot|_{s,a}^{p}$ in~$\R^{n}$.
\begin{corollary}\label{diffe}
	\emph{ For all $p,s\in (1,\infty)$ and almost all~$x\in\Omega$, $|\cdot|_{s,a}^{p}\in C^1(\mathbb{R}^{n})$. In fact, for all~$i=1,2,\ldots,n$, and all $\xi\in\mathbb{R}^{n}$, $$\partial_{i}|\xi|_{s,a}^{p}=pa_{i}(x)|\xi|_{s,a}^{p-s}\left| \xi_{i}\right|^{s-2}\xi_{i}.$$}
\end{corollary}
Now we give some specific examples such that solutions of the relevant equations are in~$W^{1,\infty}_{\loc}$. In particular, these examples satisfy Assumptions \ref{ass9}, \ref{ass2}, and \ref{ngradb}. See
 \cite[Remark 2.2]{Mezei} for the particular class of norms $\sqrt{\lambda|\xi|_{4}^{2}+\mu|\xi|^{2}}$ ($\lambda,\mu>0$).
\bexample\label{gbexa}
\emph{Let~$H(x,\xi)=\sqrt[p]{|\xi|_{s,a}^{p}+|\xi|_{A}^{p}}$ ($2\leq s<\infty$) for all~$x\in\Omega$ and~$\xi\in\R^{n}$, where the measurable matrix function~$A: \Omega\rightarrow\R^{n^{2}}$ satisfies: 
\begin{enumerate}
\item For all~$x\in\Omega$,~$A(x)$ is symmetric;
\item for every domain~$\omega\Subset\Omega$, there exist positive constants~$\theta_{1,\omega}$ and~$\theta_{2,\omega}$ such that for all~$x\in\omega$ and~$\xi\in\R^{n}$, $$\theta_{1,\omega}|\xi|\leq|\xi|_{A}=\sqrt{A(x)\xi\cdot\xi}\leq \theta_{2,\omega}|\xi|.$$
\end{enumerate} 
In addition, suppose that for every domain~$\omega\Subset\Omega$, there exist positive constants~$\vartheta_{\omega}\leq 1$ and~$\Lambda'_{3,\omega}$ such that~$a$ is H\"older continuous in~$\omega$ of order~$\vartheta_{\omega}$, and for all~$x,y\in\omega$ and~$\xi\in\R^{n}$,
\begin{align*}%\label{Aholder}
||\xi|^{p-2}_{A(x)}A(x)\xi-|\xi|^{p-2}_{A(y)}A(y)\xi|\leq \Lambda'_{3,\omega}|\xi|^{p-1}|x-y|^{\vartheta_{\omega}}\quad\mbox{and}\quad|A(x)|\leq \Lambda'_{3,\omega}.
\end{align*} 
 %a special case where~$s=4$,~$p=2$,~$a=(1,1,\ldots,1)$, and~$A=I_{n\times n}$. 
Then~$H$ satisfies Assumptions \ref{ass9} and \ref{ass2} with~$\tilde{\delta}=2$ by  Remark \ref{revaf}, Corollary \ref{diffe}, and
Lemmas \ref{bregul} and \ref{ABE}, and for all~$x\in\Omega$,~$\mathcal{A}(x,\xi)\in C^{1}(\R^{n}\setminus\{0\})$. Moreover, for every domain~$\omega\Subset\Omega$, there exist positive constants~$\vartheta_{\omega}\leq 1$,~$\Lambda_{1,\omega}$,~$\Lambda_{2,\omega}$, and~$\Lambda_{3,\omega}$ such that for all~$x,y\in\omega$, all~$\xi\in\R^{n}\setminus\{0\}$, and all~$\eta\in\R^{n}$,
\begin{align*}
D_{\xi}\mathcal{A}(x,\xi)\eta\cdot\eta\geq \Lambda_{1,\omega}|\xi|^{p-2}|\eta|^{2},\quad |D_{\xi}\mathcal{A}(x,\xi)|\leq \Lambda_{2,\omega}|\xi|^{p-2},
\end{align*}
and \begin{align*}
|\mathcal{A}(x,\xi)-\mathcal{A}(y,\xi)|\leq \Lambda_{3,\omega}|\xi|^{p-1}|x-y|^{\vartheta_{\omega}}.
\end{align*}
We also assume that~$V\in \widehat{M}^{q}_{\loc}(p;\Omega)$.
\iffalse
when~$p<n$,~$q>n$ and %~$q>n/(p-\theta)$;
when~$p\geq n$, for some~$\theta\in (p-1,p)$ and all domains~$\omega\Subset\Omega$,
$$\sup_{\substack{y\in\omega\\0<r<\diam(\omega)}}\frac{1}{r^{n-p+\theta}}\int_{B_{r}(y)\cap\omega}|V|\dx<\infty.$$
Then by \cite[Theorem 5.3]{Lieberman93}, which provides~$C^{1,\alpha}$ estimates for certain quasilinear equations with measure data,
\fi
Then by Theorem \ref{gethm}, in all subdomains~$\Omega'$ of~$\Omega$, solutions of the equations $Q'_{p,\mathcal{A},V-W}[u]=0$ for all~$W\in L^{\infty}_{\loc}(\Omega')$ and~$Q'[u]=f$ for all~$f\in C_{c}^{\infty}(\Omega')$ are in~$W^{1,\infty}_{\loc}(\Omega')$.} %Moreover,~$H$ satisfies Assumption \ref{ass2} with~$\tilde{\delta}=2$ by % Remark \ref{revaf}, Corollary \ref{diffe}, and
%Lemmas \ref{bregul} and \ref{ABE}.
\eexample
%Recently, we have proved some new results related to the functional $Q$ and its Euler-Lagrange equation $Q'[u]=0$.
%Using Kato's inequality \cite[Theorem 2.1]{Ambrosio}, we extend a bit the Liouville comparison theorem of \cite{Hou}, and provide nontrivial examples realizing its conditions. %Moreover, we prove that in a nonempty domain~$\omega\Subset\Omega$,  the principal eigenvalue~$\lambda_{1}$ (see Section \ref{maeigen}) of~$Q'$ is isolated (an extension of the counterpart of \cite[Theorem 3.9]{PPAPDE}), and that the set of all real eigenvalues is closed. We also provide a new class of natural examples for the validity of some important results of criticality theory \cite{HPR}. %As a byproduct, we present examples of certain sequentially consistent functionals.
%In addition, we show two-sided estimates for the Bregman distances of $\sum_{i=1}^{n}a_{i}(x)|\xi_{i}|^{p}$  and $|\xi|^{p}_{s}$, where  $1<p,s<\infty$. Furthermore, by virtue of these estimates, we extend results in \cite{Das}, concerning the Maz'ya-type characterization of Hardy weights,  to the case  of $Q_{p,\mathcal{A},V}$, where $\mathcal{A}(\xi)=\nabla(|\xi|_s^p)/p$.
%
%Throughout this section, unless otherwise stated, we assume that $(H(x,\cdot))_{x\in\Omega}$ is a family of norms on~$\mathbb{R}^{n}$ fulfilling Assumptions~\ref{ass9}, $V\!\in\! M^{q}_{\loc}(p;\Omega)$, and $\gw\Subset \Gw$ is a domain. 
%\subsection{Examples for criticality theory}
%\subsection{two-sided estimates for certain Bregman distance}
\subsection{Two-sided estimates for the Bregman distances of~$|\cdot|^{p}_{s,a}$ }\label{dbregman}
In this subsection, we develop some two-sided estimates for the Bregman distances of~$|\cdot|_{s,a}^{p}$. 

We first introduce some symbols which are only used in Sections \ref{dbregman}, \ref{sesec}, and \ref{secmazya}.
%\bdefinition
%\emph{Two nonnegative functions~$f$ and~$g$ are called \emph{equivalent} if~$f\asymp g$, namely, $cg\leq f\leq Cg$ for some positive constants~$c$ and~$C$.}
%\edefinition
\begin{notation}%\label{lnot}
%These notation are `local' and the objects that you denote are not complicated, moreover, they appear only in the proofs of the following results, so please, minimize introducing them, and  introduce only the necessary ones in the place where you use them.
\emph{\begin{itemize}
\item For every~$\xi\in\R^{n}$ and almost all~$x\in\Omega$, we write~$\xi_{i}'\triangleq \sqrt[s]{a_{i}(x)}\xi_{i}$ ($1<s<\infty$);
\item for every $1<s<\infty$, %~$r\triangleq \max\{s/p,p/s\}$,
let $m\triangleq\min\{s,2\}$ and~$M\triangleq \max\{s,2\}$;
\item for all~$\xi\in\R^{n}$, we write~$f(\xi)\triangleq|\xi|_{s,a}^{s}$;
%~$$g(\xi)\triangleq|\xi|_{s,a}^{p}\quad\mbox{and}\quad f(\xi)\triangleq|\xi|_{s,a}^{s}.$$ %by~$\zeta$,~$g(\xi)$,~$f(\xi)$, and~$\phi(t)$, respectively
%with a such long list of notations it is better to defined each of the separately e.g., $g(\xi) = |\xi|_{s}^{p}$. Moreover, I doubt whether it worth to introduce these extra notations except $g$ (in particular it seems that you almost don't use some of them e.g.  $\psi$ and $\phi$), better leave them without these extra notation. Place this notations in the beginning of the proof.} Moreover, let
%$$g(\xi)\triangleq |\xi|_{s}^{p}.$$ 
%$g$ was already defined.
%
%In addition, for~$s<p$, we set $$S_{0}(\zeta,\xi)\triangleq ||\xi+\eta|_{s}^{s}-|\xi|_{s}^{s}|^{2}(||\xi+\eta|_{s}^{s}-|\xi|_{s}^{s}|+|\xi|_{s}^{s})^{r-2}.$$ 
\item for all~$1<l<\infty$,~$1<p\neq s<\infty$, and~$\xi,\eta\in\R^{n}$, we employ the symbols: 
$$R_{1}(\xi,\eta; l)\triangleq|\eta|_{s,a}^{l}(|\eta|_{s,a}+|\xi|_{s,a})^{p-l} \quad 
\mbox{and} \quad
R_{2}(\xi,\eta)\triangleq |\xi|_{s,a}^{p-s}\sum_{i=1}^{n}|\eta'_{i}|^{2}\left(|\eta'_{i}|+|\xi'_{i}|\right)^{s-2},$$
where in the definition of $R_{2}$, when~$p<s$ we further assume that $\xi\neq 0$;
\item in the case of $H(x,\xi)=|\xi|_{s,a}$, we replace the subscript `$\mathcal{A}$' with the subscript `$s,a$'.
%where~$\xi_{i}'\triangleq \sqrt[s]{a_{i}(x)}\xi_{i}$ and~$\eta_{i}'\triangleq \sqrt[s]{a_{i}(x)}\eta_{i}$.
  %Furthermore, let
% $ S(\xi,\eta;l)\triangleq R_{1}(\xi,\eta;l)+R_{2}(\xi,\eta)$. But $S_1$ depend on three variables
\end{itemize}
}
\end{notation}
\subsubsection{$p=s$}
We first consider the Bregman distances of~$|\xi|_{p,a}^{p}$, corresponding to the \emph{weighted pseudo~$p$-Laplacian}, i.e.,~$p=s$ and
\bequationn%\label{pplaplacian}
|\xi|_{p,a}^{p}=\sum_{i=1}^{n}a_{i}(x)|\xi_{i}|^{p}.
\eequationn
%and $a_{i}$ are positive real-valued Lebesgue measurable functions defined in $\Omega$, which are equivalent to~$1$ locally in~$\Omega$ for~$i=1,2\cdots,n$, and let ${\mathcal{A}}(x,\xi)=\nabla_\xi (H(x,\xi)^p)/p$.
%In this case, we call~$\dive{\mathcal{A}}$ (see Notation \ref{Fgrad}) the {\em weighted pseudo $p$-Laplacian}. 
%
\blemma\label{psevec}
For almost all~$x\in\Omega$ and all~$\xi,\eta\in\R^{n}$,
$$|\xi+\eta|_{p,a}^{p}-|\xi|_{p,a}^{p}-p\sum_{i=1}^{n}a_{i}(x)|\xi_{i}|^{p-2}\xi_{i}\eta_{i}\asymp\sum_{i=1}^{n}|\eta'_{i}|^{2}\left(|\eta'_{i}|+|\xi'_{i}|\right)^{p-2},$$ 
%, and $\xi_{i}'\triangleq\sqrt[p]{a_{i}(x)}\xi_{i}$ and~$\eta_{i}'\triangleq\sqrt[p]{a_{i}(x)}\eta_{i}$, $i=1,2,\ldots,n$.
where both equivalence constants depend only on~$p$.
\elemma
\bproof
%This conclusion is actually indicated in \cite[remarks 8.24 and 8.27]{Hou}. %\Hmm{This result is indicated in my master thesis. Yet it is different from Regev and Pinchover's result.}
By virtue of Lemma~\ref{ABE}, we have, for all $\xi,\eta\in\R^{n}$, $i=1,2,\ldots,n$, and almost all~$x\in\Omega$,
$$a_{i}(x)|\xi_{i}+\eta_{i}|^{p}-a_{i}(x)|\xi_{i}|^{p}-pa_{i}(x)|\xi_{i}|^{p-2}\xi_{i}\eta_{i}\asymp |\eta'_{i}|^{2}\left(|\eta'_{i}|+|\xi'_{i}|\right)^{p-2},$$ where both equivalence constants depend only on~$p$. 
Therefore, \begin{equation*}|\xi+\eta|_{p,a}^{p}-|\xi|_{p,a}^{p}-p\sum_{i=1}^{n}a_{i}(x)|\xi_{i}|^{p-2}\xi_{i}\eta_{i}\asymp\sum_{i=1}^{n}|\eta'_{i}|^{2}\left(|\eta'_{i}|+|\xi'_{i}|\right)^{p-2}.\qquad \qedhere
\end{equation*}
\eproof
\subsubsection{$1<p,s<\infty$ ($p\neq s$)}
%\subsubsection{$p,s$}
Next we consider~$|\xi|^{p}_{s,a}$ for $1<p,s<\infty$ ($p\neq s$). %another family of operators which we call the {\em weighted $(p,s)$-Laplacians} associated with the weighted $\ell^s$-norms  $$H(\xi)=|\xi|_{s,a}$$on  $\R^n$. % The results in this subsection were accomplished in the present semester. 
%\iffalse
The following simple lemma, a \emph{chain rule} for the Bregman distance, is found in the proof of \cite[Proposition 1.2.7]{BI00}, and \cite[(1.108)]{BI00}. %with~$g(\xi)=\phi(f(\xi))$ where~$\phi(t)=t^{p/s}$ and~$f(\xi)=|\xi|^{s}_{s,a}$
\blemma[{cf. \cite[Proposition 1.2.7 and (1.108)]{BI00}}]\label{bdde}
Let~$r>1$ be a constant and~$h:\R^{n}\rightarrow[0,\infty)$ be a positive definite function such that~$h\in C^{1}(\R^{n}\setminus\{0\}) $,~$h^{r}\in C^{1}(\R^{n})$, and~$\nabla(h^{r})(0)=0$. %Let $\nabla h(0)\triangleq 0$.
%$$\lim_{\xi\rightarrow 0}\frac{f(\xi)^{r}}{|\xi|}=0.$$
Then for all~$\xi,\eta\in\R^{n}$,
\begin{align*}
	%\begin{split}
 D_{h^{r}}(\zeta,\xi)&=h(\zeta)^{r}-h(\xi)^{r}-rh(\xi)^{r-1}\nabla h(\xi)\cdot\eta\\
	&=h(\zeta)^{r}-h(\xi)^{r}-rh(\xi)^{r-1}(h(\zeta)-h(\xi))+rh(\xi)^{r-1}(h(\zeta)-h(\xi)-\nabla h(\xi)\cdot\eta),
	%&\leq C(p/s)|f(\zeta)-f(\xi)|^{2}(|f(\zeta)-f(\xi)|+f(\xi))^{p/s-2}+\\ %&C(s,p/s)f(\xi)^{p/s-1}\sum_{i=1}^{n}|\eta'_{i}|^{2}\left(|\xi'_{i}|+|\eta'_{i}|\right)^{s-2}.
	%\end{split}	
\end{align*}
where~$\zeta\triangleq\xi+\eta$ and $\nabla h(0)\triangleq 0$.
\elemma
%\fi
Note that  for all~$1<s<p<\infty$, Lemma \ref{pBE} implies that for all~$\xi,\eta\in \R^{n}$,
$$(|\xi+\eta|_{s,a}^{s})^{p/s}-(|\xi|_{s,a}^{s})^{p/s}-p/s(|\xi|_{s,a}^{s})^{p/s-1}(|\xi+\eta|_{s,a}^{s}-|\xi|_{s,a}^{s})\geq 0.$$ %$D_{t^{p/s}}(|\xi+\eta|_{s,a}^{s},|\xi|_{s,a}^{s})\geq 0$
%for all~$\xi,\eta\in \R^{n}$.
Then the following lemma is readily derived from Lemmas \ref{psevec} and \ref{bdde}. %the proof of \cite[Proposition 1.2.7]{BI00} with~$g(\xi)=\phi(f(\xi))$ where~$\phi(t)=t^{p/s}$ and~$f(\xi)=|\xi|^{s}_{s,a}$, and Lemma \ref{psevec}. %\Hmm{I do not think that it deserves a general lemma to clarify the decomposition of Bregman distances because some delicate differentiability is involved.}
\blemma\label{pgs1}
Suppose that $1<s<p<\infty$. %, and let $g(\xi)=|\xi|_{s}^{p}$. 
%For almost all~$x\in\Omega$ and all~$\xi =(\xi_1,\ldots,\xi_n)\in\mathbb{R}^{n}$, let~$H(x,\xi)\triangleq|\xi|_{s}.$
% and let~$\mathcal{A}(x,\xi)=\nabla_{\xi}F(x,\xi)=\nabla_{\xi}(\frac{1}{p}H_{p}(x,\xi)^{p})$ for almost all~$x\in\Omega$. 
%For almost all~$x\in\Omega$ and 
Then there exists a positive constant~$c(p,s)$ such that for almost all~$x\in\Omega$ and all~$\xi,\eta\in\R^{n}$,
\begin{align*}
%D_{g}(\xi+\eta,\xi)\geq CR_{1}(\xi,\eta;2),\quad=|\xi+\eta|_{s,a}^{p}-|\xi|_{s,a}^{p}-\nabla g(\xi)\cdot\eta
D_{|\cdot|_{s,a}^{p}}(\xi+\eta,\xi)&=
|\xi+\eta|_{s,a}^{p}-|\xi|_{s,a}^{p}-p|\xi|_{s,a}^{p-s}\sum_{i=1}^{n}a_{i}(x)\left| \xi_{i}\right|^{s-2}\xi_{i}\eta_{i}\\
&\geq c(p,s)|\xi|_{s,a}^{p-s}\sum_{i=1}^{n}|\eta'_{i}|^{2}\left(|\eta'_{i}|+|\xi'_{i}|\right)^{s-2}=c(p,s)R_{2}(\xi,\eta).
\end{align*}
\elemma
%=|\xi+\eta|_{s}^{p}-|\xi|_{s}^{p}-p|\xi|_{s}^{p-s}\sum_{i=1}^{n}\left| \xi_{i}\right|^{s-2} \xi_{i}\eta_{i}}$.%\( \sum_{j=-\infty} \) % In addition, all the equivalence constants depend on nothing but~$p$,~$s$, and~$n$.
\blemma\label{pls1}
Suppose that $1<p<s<\infty$. %For almost all~$x\in\Omega$ and all~$\xi =(\xi_1,\ldots,\xi_n)\in\mathbb{R}^{n}$, let~$H(x,\xi)\triangleq|\xi|_{s}.$
% and let~$\mathcal{A}(x,\xi)=\nabla_{\xi}F(x,\xi)=\nabla_{\xi}(\frac{1}{p}H_{p}(x,\xi)^{p})$ for almost all~$x\in\Omega$. 
%For almost all~$x\in\Omega$ and 
Then there exists a positive constant~$C(p,s)$ such that for almost all~$x\in\Omega$ and all~$\xi,\eta\in\R^{n}$ with $\xi\neq 0$,
\begin{align*}
%D_{g}(\xi+\eta,\xi)\geq CR_{1}(\xi,\eta;2),\quad
D_{|\cdot|_{s,a}^{p}}(\xi+\eta,\xi) \leq C(p,s)|\xi|_{s,a}^{p-s}\sum_{i=1}^{n}|\eta'_{i}|^{2}\left(|\eta'_{i}|+|\xi'_{i}|\right)^{s-2}=C(p,s)R_{2}(\xi,\eta).
\end{align*}
%where $\myd{D_{g}(\xi+\eta,\xi)=|\xi+\eta|_{s}^{p}-|\xi|_{s}^{p}-p|\xi|_{s}^{p-s}\sum_{i=1}^{n}\left| \xi_{i}\right|^{s-2} \xi_{i}\eta_{i}}$.
%You don't need to repeat explaining this notation.
%\( \sum_{j=-\infty} \) % In addition, all the equivalence constants depend on nothing but~$p$,~$s$, and~$n$.
\elemma
%\begin{comment}
\bproof
First note that
\begin{align*}	
	%\begin{split}
		D_{|\cdot|_{s,a}^{p}}(\xi+\eta,\xi)&=|\xi+\eta|_{s,a}^{p}-|\xi|_{s,a}^{p}-p|\xi|_{s,a}^{p-s}\sum_{i=1}^{n}a_{i}(x)\left| \xi_{i}\right|^{s-2} \xi_{i}\eta_{i}\\
	&=|\xi|_{s,a}^{p-s}\left( |\xi+\eta|_{s,a}^{p}|\xi|_{s,a}^{s-p}-|\xi|_{s,a}^{s}-p\sum_{i=1}^{n}a_{i}(x)\left| \xi_{i}\right|^{s-2} \xi_{i}\eta_{i}\right).	
	%\end{split}
\end{align*}
Then a standard application of Young's inequality yields
\begin{align*}
%\begin{split}
		D_{|\cdot|_{s,a}^{p}}(\xi+\eta,\xi)
&\leq |\xi|_{s,a}^{p-s}\left( \frac{p}{s}|\xi+\eta|_{s,a}^{s}+\left(1-\frac{p}{s}\right)|\xi|_{s,a}^{s}-|\xi|_{s,a}^{s}-p\sum_{i=1}^{n}a_{i}(x)\left| \xi_{i}\right|^{s-2} \xi_{i}\eta_{i}\right)\\
&=\frac{p}{s}|\xi|_{s,a}^{p-s}\left(|\xi+\eta|_{s,a}^{s}-|\xi|_{s,a}^{s}-s\sum_{i=1}^{n}a_{i}(x)\left| \xi_{i}\right|^{s-2} \xi_{i}\eta_{i}\right)\\
&=\frac{p}{s}|\xi|_{s,a}^{p-s}D_{|\cdot|_{s,a}^{s}}(\xi+\eta,\xi) 
\leq C(p,s)R_{2}(\xi,\eta),	
\end{align*}
where the last inequality is due to Lemma \ref{psevec}.
\eproof
%\end{comment}
The following lemma can be easily proved.
\blemma
In every normed linear space 
$(X,\Vert\cdot\Vert)$, for all~$x,y\in X$, 
$$\max\{\Vert x\Vert,\Vert y\Vert\}\asymp \Vert x\Vert+\Vert y-x\Vert.$$
\elemma
%\begin{comment}
\iffalse
\bproof
 Let~$z\triangleq y-x$. By the triangle inequality, we get$$\max\{\Vert x\Vert,\Vert y\Vert\}\leq \Vert x\Vert+\Vert y\Vert\leq 2\Vert x\Vert+\Vert z\Vert\leq2(\Vert x\Vert+\Vert z\Vert).$$
Moreover, we can claim that $$\max\{\Vert x\Vert,\Vert y\Vert\}\geq \frac{\Vert x\Vert+\Vert y\Vert}{2}\geq\frac{\Vert z\Vert}{2},$$
and $$\max\{\Vert x\Vert,\Vert y\Vert\}\geq \Vert x\Vert,$$
which establishes the lemma.
\eproof
\fi
%\end{comment}
\noindent Hence,  we easily conclude the next lemma from \cite[(1.2), (1.4), and Remarks 1 and 4]{Roach}. %\cite[(1.2), (1.4), and Remarks 1 and 4]{Roach}.
%\Hmm{In \cite{Sprung}, the dependence on parameters is not mentioned while in \cite{Roach}, we may get the dependence.}
\blemma\label{bregul}
Suppose that $1<p,s<\infty$. %For almost all~$x\in\Omega$ and all~$\xi =(\xi_1,\ldots,\xi_n)\in\mathbb{R}^{n}$, let~$H(x,\xi)\triangleq|\xi|_{s}.$
% and let~$\mathcal{A}(x,\xi)=\nabla_{\xi}F(x,\xi)=\nabla_{\xi}(\frac{1}{p}H_{p}(x,\xi)^{p})$ for almost all~$x\in\Omega$. 
%For almost all~$x\in\Omega$ and 
Then there exist positive constants~$c(p,s)$ and~$C(p,s)$ such that for almost all~$x\in\Omega$ and all~$\xi,\eta\in\R^{n}$,
\begin{align*}
c(p,s)R_{1}(\xi,\eta;M)&=c(p,s)|\eta|_{s,a}^{M}(|\eta|_{s,a}+|\xi|_{s,a})^{p-M}
\leq D_{|\cdot|_{s,a}^{p}}(\xi+\eta,\xi)\\
&\leq C(p,s)|\eta|_{s,a}^{m}(|\eta|_{s,a}+|\xi|_{s,a})^{p-m}=C(p,s)
	R_{1}(\xi,\eta;m).
%D_{g}(\xi+\eta,\xi)\geq rR_{2}(\xi,\eta),
\end{align*}
%please use `where' and not `wherein' and better, omit this reminder
% 
 %$\myd{D_{g}(\xi+\eta,\xi)=|\xi+\eta|_{s}^{p}-|\xi|_{s}^{p}-p|\xi|_{s}^{p-s}\sum_{i=1}^{n}\left| \xi_{i}\right|^{s-2} \xi_{i}\eta_{i}}$. % In addition, all the equivalence constants depend on nothing but~$p$,~$s$, and~$n$.
\elemma
%\iffalse
As a byproduct, we give a large class of operators satisfying Assumptions \ref{ass9} and \ref{ass2}. %By virtue of these assumptions, criticality theory is established in \cite{HPR} for the Finsler~$p$-Laplace equation with a potential.
\begin{example}\emph{Let~$H(x,\xi)=|\xi|_{s,a}$ for all~$\xi\in\R^{n}$ and almost all~$x\in\Omega$. Note that for almost all~$x\in\Omega$,~$|\cdot|_{s,a}$ is uniformly convex (see \cite[p.\,192]{Roach}) and continuously differentiable in~$\R^{n}\setminus\{0\}$ by Lemma \ref{qnorm}. Then~$H$ satisfies Assumption \ref{ass9}. By Lemma \ref{bregul}, for all~$1<p,s<\infty$,  almost all~$x\in\Omega$, and all~$\xi,\eta\in\R^{n}$,
\begin{equation*}
D_{|\cdot|_{s,a}^{p}}(\xi+\eta,\xi)\geq c(p,s)|\eta|_{s,a}^{M}(|\eta|_{s,a}+|\xi|_{s,a})^{p-M},
\end{equation*}
for some positive constant~$c(p,s)$. This verifies Assumption \ref{ass2}.
}
\end{example}
%\fi
\begin{comment}
\begin{remark}
\emph{On the side, for~$1<p<2$, the local strong convexity can never be realized. Actually the local strong convexity implies that locally in~$\Omega$, the function~$f_{x}(\xi)\triangleq \|\xi\|_{\mathcal{A}}^{p}=\mathcal{A}(x,\xi)\cdot \xi$ is uniformly convex. By \cite[Proposition 3.5.8]{Z02}, we get$$ \liminf_{\|\xi\|\rightarrow\infty}\frac{\|\xi\|_{\mathcal{A}}^{p}}{\|\xi\|_{\mathcal{A}}^{2}}>0.$$ On the other hand, since $p<2$,  the infimum limit is~$0$.}
\end{remark}
\end{comment}
\blemma\label{pgls}
Suppose that $1<s,p<\infty$. %For almost all~$x\in\Omega$ and all~$\xi =(\xi_1,\ldots,\xi_n)\in\mathbb{R}^{n}$, let~$H(x,\xi)\triangleq|\xi|_{s}.$
% and let~$\mathcal{A}(x,\xi)=\nabla_{\xi}F(x,\xi)=\nabla_{\xi}(\frac{1}{p}H_{p}(x,\xi)^{p})$ for almost all~$x\in\Omega$. 
%For almost all~$x\in\Omega$ and 
Then for almost all~$x\in\Omega$, all~$\xi,\eta\in\R^{n}$ such that~$|\xi|_{s,a}\leq|\eta|_{s,a}$, and all~$l\in [m, M]$, we have
%\bequationn
%D_{g}(\xi+\eta,\xi)\leq CR_{1}(\xi,\eta;s)%,\quad D_{g}(\xi+\eta,\xi)\leq R_{2}(\xi,\eta),
%\eequationn
%and
\bequationn
D_{|\cdot|_{s,a}^{p}}(\xi+\eta,\xi)\asymp |\eta|_{s,a}^{l}(|\eta|_{s,a}+|\xi|_{s,a})^{p-l}=R_{1}(\xi,\eta;l),
\eequationn
where both equivalence constants depend only on~$p$ and~$s$.%\Hmm{For the proof, see [V2, Lemma 3.10].}
%and
%\bequationn
%D_{g}(\xi+\eta,\xi)\geq %cR_{1}(\xi,\eta;m),
%\eequationn
%wherein~$\myd{D_{g}(\xi+\eta,\xi)=|\xi+\eta|_{s}^{p}-|\xi|_{s}^{p}-p|\xi|_{s}^{p-s}\sum_{i=1}^{n}\left| \xi_{i}\right|^{s-2} \xi_{i}\eta_{i}}$. 
% In addition, all the equivalence constants depend on nothing but~$p$,~$s$, and~$n$.
\elemma
\bproof
Due to Lemma \ref{bregul}, the desired conclusion boils down to
\bequation\label{R1mR1M}
	cR_{1}(\xi,\eta;m)\leq 	R_{1}(\xi,\eta;l)\leq CR_{1}(\xi,\eta;M),
\eequation
for some positive constants~$C$ and~$c$. 
Under the hypothesis that~$|\xi|_{s,a}\leq|\eta|_{s,a}$, the inequalities \eqref{R1mR1M} can be easily verified.
\iffalse
we may as well require~$|\eta|_{s}\neq 0$ since the converse case is trivial. Hence, we deduce that
\begin{align*}
	R_{1}(\xi,\eta;m)&=|\eta|_{s}^{m}(|\eta|_{s}+|\xi|_{s})^{p-m}
	= |\eta|_{s}^{l}(|\eta|_{s}+|\xi|_{s})^{p-l}\frac{(|\eta|_{s}+|\xi|_{s})^{l-m}}{|\eta|_{s}^{l-m}}\\	
	&\leq C(s)|\eta|_{s}^{l}(|\eta|_{s}+|\xi|_{s})^{p-l}= C(s)R_{1}(\xi,\eta;l).
	%\qedhere
\end{align*}
Substituting the pair~$(m,l)$ with~$(l,M)$, we can rewrite the above inequality as
\bequationn
R_{1}(\xi,\eta;l)\leq C(s)R_{1}(\xi,\eta;M).\qedhere\eequationn
\fi
%\item Now we restrict~$|\xi|_{s}\leq|\eta|_{s}$. As in the counterpart of~$s\geq 2$, we can still assume~$|\eta|_{s}\neq 0$ safely. Analogously,
%\begin{align*}
%	R_{1}(\xi,\eta;s)&=|\eta|_%{s}^{s}(|\eta|_{s}+|\xi|_{s})^{p-s}\\
%	&= |\eta|_{s}^{2}(|\eta|_{s}+|\xi|_{s})^{p-2}\frac{(|\eta|_{s}+|\xi|_{s})^{2-s}}{|\eta|_{s}^{2-s}}\\	
%	&\leq C(s)|\eta|_{s}^{2}(|\eta|_{s}+|\xi|_{s})^{p-2}\\
%	&= %C(s)R_{1}(\xi,\eta;2).\qedhere\\	
%\end{align*}
\eproof
\iffalse
\bproof
Due to Lemma \ref{bregul}, the desired conclusions boil down to
\bequationn
	cR_{1}(\xi,\eta;m)\leq 	R_{1}(\xi,\eta;l)\leq CR_{1}(\xi,\eta;M),
\eequationn
for some positive constants~$C$ and~$c$. 
Under the hypothesis~$|\xi|_{s}\leq|\eta|_{s}$, we may as well require~$|\eta|_{s}\neq 0$ since the converse case is trivial. Hence, we deduce
\begin{align*}
	R_{1}(\xi,\eta;m)&=|\eta|_{s}^{m}(|\eta|_{s}+|\xi|_{s})^{p-m}
	= |\eta|_{s}^{l}(|\eta|_{s}+|\xi|_{s})^{p-l}\frac{(|\eta|_{s}+|\xi|_{s})^{l-m}}{|\eta|_{s}^{l-m}}\\	
	&\leq C(s)|\eta|_{s}^{l}(|\eta|_{s}+|\xi|_{s})^{p-l}= C(s)R_{1}(\xi,\eta;l).
	%\qedhere
\end{align*}
Substituting the pair~$(m,l)$ with~$(l,M)$, we can rewrite the above inequality as
\bequationn
R_{1}(\xi,\eta;l)\leq C(s)R_{1}(\xi,\eta;M).\qedhere\eequationn
%\item Now we restrict~$|\xi|_{s}\leq|\eta|_{s}$. As in the counterpart of~$s\geq 2$, we can still assume~$|\eta|_{s}\neq 0$ safely. Analogously,
%\begin{align*}
%	R_{1}(\xi,\eta;s)&=|\eta|_%{s}^{s}(|\eta|_{s}+|\xi|_{s})^{p-s}\\
%	&= |\eta|_{s}^{2}(|\eta|_{s}+|\xi|_{s})^{p-2}\frac{(|\eta|_{s}+|\xi|_{s})^{2-s}}{|\eta|_{s}^{2-s}}\\	
%	&\leq C(s)|\eta|_{s}^{2}(|\eta|_{s}+|\xi|_{s})^{p-2}\\
%	&= %C(s)R_{1}(\xi,\eta;2).\qedhere\\	
%\end{align*}
\eproof
\fi
\blemma\label{pgs2}
Suppose that $1<s<p<\infty$. %For almost all~$x\in\Omega$ and all~$\xi =(\xi_1,\ldots,\xi_n)\in\mathbb{R}^{n}$, let~$H(x,\xi)\triangleq|\xi|_{s}.$
% and let~$\mathcal{A}(x,\xi)=\nabla_{\xi}F(x,\xi)=\nabla_{\xi}(\frac{1}{p}H_{p}(x,\xi)^{p})$ for almost all~$x\in\Omega$. 
%For almost all~$x\in\Omega$ and 
Then there exists a positive constant~$C(p,s,n)$ such that for almost all~$x\in\Omega$ and all~$\xi,\eta\in\R^{n}$ with~$|\xi|_{s,a}>|\eta|_{s,a}$,
%\bequationn
%D_{g}(\xi+\eta,\xi)\leq CR_{1}(\xi,\eta;s)%,\quad D_{g}(\xi+\eta,\xi)\leq R_{2}(\xi,\eta),
%\eequationn
%and
\begin{align*}
D_{|\cdot|_{s,a}^{p}}(\xi+\eta,\xi)\leq C(p,s,n)|\xi|_{s,a}^{p-s}\sum_{i=1}^{n}|\eta'_{i}|^{2}\left(|\eta'_{i}|+|\xi'_{i}|\right)^{s-2}= C(p,s,n)R_{2}(\xi,\eta).
\end{align*}
%where~$\myd{D_{g}(\xi+\eta,\xi)=|\xi+\eta|_{s}^{p}-|\xi|_{s}^{p}-p|\xi|_{s}^{p-s}\sum_{i=1}^{n}\left| %\xi_{i}\right|^{s-2} \xi_{i}\eta_{i}}$. 
% In addition, all the equivalence constants depend on nothing but~$p$,~$s$, and~$n$.
\elemma
\bproof
%Note that~$r>1$ guarantees that~$\phi$ is convex on~$[0,\infty)$.
%The proof is based on the decomposition of~$D_{g}$ in \cite[Proposition 1.2.7]{BI00}. Since the proof is long  we omit it here.
%\begin{comment}
%We may assume that~$1<s<p<\infty$ because the case~$p=s$ is guaranteed by Lemma \ref{psevec}.
Exploiting Lemmas \ref{bdde} and \ref{pBE}, we conclude that for~$\zeta=\xi+\eta$ and $f(\cdot)=|\cdot|_{s,a}^{s}$,
%Following the decomposition in the proof of \cite[Proposition 1.2.7]{BI00} and exploiting Lemma \ref{pBE}, we conclude that for~$\zeta=\xi+\eta$,
\begin{align*}
	%\begin{split}
 &D_{|\cdot|_{s,a}^{p}}(\zeta,\xi)\\
% &=f(\zeta)^{p/s}-f(\xi)^{p/s}-\frac{p}{s}f(\xi)^{p/s-1}\nabla f(\xi)\cdot\eta\\
	&=f(\zeta)^{p/s}-f(\xi)^{p/s}-\frac{p}{s}f(\xi)^{p/s-1}(f(\zeta)-f(\xi))+\frac{p}{s}f(\xi)^{p/s-1}(f(\zeta)-f(\xi)-\nabla f(\xi)\cdot\eta)\\
	&\leq\! C(p/s)|f(\zeta)\!-\!f(\xi)|^{2}(|f(\zeta)\!-\!f(\xi)|\!+\!f(\xi))^{p/s-2}\!\!+\!C(s,p/s)f(\xi)^{p/s-1}\!\sum_{i=1}^{n}|\eta'_{i}|^{2}\left(|\eta'_{i}|\!+\!|\xi'_{i}|\right)^{s-2}\!\!.
	%\end{split}	
\end{align*}
%The remaining argument falls naturally into two cases:~$s<2$ and~$s\geq 2$.
Since~$|\xi|_{s,a}>|\eta|_{s,a}$, for all~$1<s<p<\infty$, it can be plainly verified, respectively for~$p/s>2,~p/s=2,$ and~$1<p/s<2$, that
% the factor~$(|f(\zeta)-f(\xi)|+|f(\xi)|)^{r-2}$ occasions no problem because
$$(|f(\zeta)-f(\xi)|+f(\xi))^{p/s-2}\leq C(s,p/s)f(\xi)^{p/s-2}.$$ Therefore, we are left with the task of controlling~$|f(\zeta)-f(\xi)|^{2}$ from above with $$f(\xi)\sum_{i=1}^{n}|\eta'_{i}|^{2}\left(|\eta'_{i}|+|\xi'_{i}|\right)^{s-2}$$ when~$|\xi|_{s,a}>|\eta|_{s,a}.$
%\begin{itemize} 
% Let us first tackle~$s\geq 2$.
By Lagrange's mean value theorem, we obtain the well-known inequality
	$$|x^{s}-y^{s}|\leq s|x-y|(x+y)^{s-1},$$
	for all~$x,y\geq 0$. %For a similar inequality, see \cite[page 1336]{PPAPDE}. %Denoting~$(\xi_{i}+\eta_{i})'\triangleq \sqrt[s]{a_{i}(x)}(\xi_{i}+\eta_{i})$, we calculate:
 Then
\begin{align*}
%\begin{split}
		 &|f(\zeta)-f(\xi)|^{2}=\bigg|  \sum_{i=1}^{n}(|(\xi_{i}+\eta_{i})'|^{s}-|\xi'_{i}|^{s})\bigg|^{2}
	 \leq  C(s)\bigg( \sum_{i=1}^{n}|\eta'_{i}|(|\eta'_{i}|+|\xi'_{i}|)^{s-1}\bigg)^{2}\\
	 &\leq  C(s,n)\sum_{i=1}^{n}|\eta'_{i}|^{2}(|\eta'_{i}|+|\xi'_{i}|)^{2(s-1)}\leq  C(s,n)\sum_{i=1}^{n}|\eta'_{i}|^{2}(|\eta'_{i}|+|\xi'_{i}|)^{s-2}|\xi|_{s,a}^{s}.\qedhere 
%\end{split}
\end{align*}
\eproof
%\subsubsection{$p<s$ and~$|\xi|_{s,a}>|\eta|_{s,a}$}
%\subsubsection{$p<s$ and~$|\xi|_{s,a}>|\eta|_{s,a}$}
\blemma\label{pls2}
Suppose that $1<p<s<\infty$. %For almost all~$x\in\Omega$ and all~$\xi =(\xi_1,\ldots,\xi_n)\in\mathbb{R}^{n}$, let~$H(x,\xi)\triangleq|\xi|_{s}.$
% and let~$\mathcal{A}(x,\xi)=\nabla_{\xi}F(x,\xi)=\nabla_{\xi}(\frac{1}{p}H_{p}(x,\xi)^{p})$ for almost all~$x\in\Omega$. 
%For almost all~$x\in\Omega$ and 
Then there exists a positive constant~$c(p,s)$ such that for almost all~$x\in\Omega$ and all~$\xi,\eta\in\R^{n}$ with~$|\xi|_{s,a}>|\eta|_{s,a}$,
%\bequationn
%D_{g}(\xi+\eta,\xi)\leq CR_{1}(\xi,\eta;s)%,\quad D_{g}(\xi+\eta,\xi)\leq R_{2}(\xi,\eta),
%\eequationn
%and
$$D_{|\cdot|_{s,a}^{p}}(\xi+\eta,\xi)\geq c(p,s)|\xi|_{s,a}^{p-s}\sum_{i=1}^{n}|\eta'_{i}|^{2}\left(|\eta'_{i}|+|\xi'_{i}|\right)^{s-2}= c(p,s)R_{2}(\xi,\eta).$$
% %where~$\myd{D_{g}(\xi+\eta,\xi)=|\xi+\eta|_{s}^{p}-|\xi|_{s}^{p}-p|\xi|_{s}^{p-s}\sum_{i=1}^{n}\left| \xi_{i}\right|^{s-2} \xi_{i}\eta_{i}}$. % In addition, all the equivalence constants depend on nothing but~$p$,~$s$, and~$n$.
\elemma
\bproof
%When~$|\xi|_{s}\leq|\eta|_{s}$, the proof is the same as the counterparts
%The proof is based on the decomposition of~$D_{g}$ in \cite[page 58, (1.108)]{BI00} and that of~$D_{f}$ in \cite[page 29]{BI00}.Since the proof is long  we omit it here.
%\begin{comment}
Clearly,~$\xi\neq 0$. Then Lemma \ref{bdde} implies that   %\cite[p.~58, (1.108)]{BI00} gives, in our notation,
\begin{align}\label{glow}
%\begin{split}
		D_{|\cdot|_{s,a}^{p}}(\xi+\eta,\xi)&=D_{t^{p}}(|\xi+\eta|_{s,a},|\xi|_{s,a})+p|\xi|_{s,a}^{p-1}D_{|\cdot|_{s,a}}(\xi+\eta,\xi)\notag\\
	&\geq D_{t^{p}}(|\xi+\eta|_{s,a},|\xi|_{s,a})\notag\\
	&\geq c(p)\left||\xi+\eta|_{s,a}-|\xi|_{s,a}\right|^{2}\left(\left||\xi+\eta|_{s,a}-|\xi|_{s,a}\right|+|\xi|_{s,a}\right)^{p-2},
%\end{split}
\end{align}
where the two inequalities are ensured respectively by %\cite[Theorem 2.1.11]{Z02}
the proof of \cite[Lemma 5.6]{HKM}, and Lemma \ref{pBE}. % and Lemma \ref{pBE}. % Now the trick appears.
%In this case,~$r=s/p$.
Note that $f(\xi)=|\xi|_{s,a}^{s}=(|\xi|_{s,a}^{p})^{s/p}$. Then %we recall the decomposition 
by Lemma \ref{bdde}, %\cite[p.~29]{BI00}
$$D_{f}(\xi+\eta,\xi)=D_{t^{s/p}}(|\xi+\eta|_{s,a}^{p},|\xi|_{s,a}^{p})+\frac{s}{p}(|\xi|_{s,a}^{p})^{s/p-1}D_{|\cdot|_{s,a}^{p}}(\xi+\eta,\xi).$$
The lemma will be proved once we establish the inequality 
$$D_{f}(\xi+\eta,\xi)\leq C(p,s)|\xi|_{s,a}^{s-p}D_{|\cdot|_{s,a}^{p}}(\xi+\eta,\xi)=C(p,s)\frac{s}{p}(|\xi|_{s,a}^{p})^{s/p-1}D_{|\cdot|_{s,a}^{p}}(\xi+\eta,\xi),$$
because~$R_{2}(\xi,\eta)=|\xi|_{s,a}^{p-s}\sum_{i=1}^{n}|\eta'_{i}|^{2}\left(|\xi'_{i}|+|\eta'_{i}|\right)^{s-2}$, and by Lemma \ref{psevec},~$$D_{f}(\xi+\eta,\xi)\geq c(s)\sum_{i=1}^{n}|\eta'_{i}|^{2}\left(|\eta'_{i}|+|\xi'_{i}|\right)^{s-2}.$$ For this purpose, it suffices to show that
\begin{equation}\label{phig}
	D_{t^{s/p}}(|\xi+\eta|_{s,a}^{p},|\xi|_{s,a}^{p})\leq C(p,s)\frac{s}{p}(|\xi|_{s}^{p})^{s/p-1}D_{|\cdot|_{s,a}^{p}}(\xi+\eta,\xi). 
\end{equation}
By Lemma \ref{pBE}, we get
$$D_{t^{s/p}}(|\xi+\eta|_{s,a}^{p},|\xi|_{s,a}^{p})\leq C(s/p)\left||\xi+\eta|_{s,a}^{p}-|\xi|_{s,a}^{p}\right|^{2}\left(\left||\xi+\eta|_{s,a}^{p}-|\xi|_{s,a}^{p}\right|+|\xi|_{s,a}^{p}\right)^{s/p-2}. $$
%The inequality
%$$|x^{p}-y^{p}|\leq p|x-y|(x^{p-1}+y^{p-1})$$ for all~$x,y\geq 0$, gives rise to
Moreover,
\begin{align*}
\left||\xi+\eta|_{s,a}^{p}-|\xi|_{s,a}^{p}\right|&\leq p\left||\xi+\eta|_{s,a}-|\xi|_{s,a}\right|\left(|\xi+\eta|_{s,a}^{p-1}+|\xi|_{s,a}^{p-1}\right)\\
&\leq C(p)\left||\xi+\eta|_{s,a}-|\xi|_{s,a}\right||\xi|_{s,a}^{p-1}.
\end{align*}
%where we use the condition~$|\xi|_{s,a}>|\eta|_{s,a}$ in the last step.
On account of \eqref{glow}, if we prove that
\begin{align}\label{ofact}
&|\xi|_{s,a}^{2(p-1)}\left(\left||\xi+\eta|_{s,a}^{p}-|\xi|_{s}^{p}\right|+|\xi|_{s,a}^{p}\right)^{s/p-2}\notag\\
&\leq C(p,s)|\xi|_{s,a}^{s-p}\left(\left||\xi+\eta|_{s,a}-|\xi|_{s,a}\right|+|\xi|_{s,a}\right)^{p-2},	
\end{align} 
then the desired inequality \eqref{phig} follows. It is a simple matter to check, by virtue of the hypothesis $|\xi|_{s,a}>|\eta|_{s,a}$, that
$$\left||\xi+\eta|_{s,a}^{p}-|\xi|_{s,a}^{p}\right|+|\xi|_{s,a}^{p}\asymp|\xi|_{s,a}^{p}\quad \mbox{and}\quad\left||\xi+\eta|_{s,a}-|\xi|_{s,a}\right|+|\xi|_{s,a}\asymp|\xi|_{s,a},$$ and hence both sides in \eqref{ofact} are equivalent to~$|\xi|_{s,a}^{s-2}$, which implies \eqref{ofact}.
%\end{comment}
\eproof
%\subssection{Simplified energies}
\subsection{Simplified energies for~$Q_{p,s,a,V}$}\label{sesec}
In this subsection, we first recall a consequence of a Picone-type identity \cite[Lemma 4.9]{HPR}, and then deduce simplified energies for~$Q_{p,s,a,V}$.
\blemma[{\cite[Lemma 4.12]{HPR}}]\label{GRF}%\Hmm{Is it appropriate for me to copy results from our paper \textbf{verbatim}?}
For all positive
		solutions~$v\in W^{1,p}_{\loc}(\Omega)$ of $Q'[\psi]=0$ in~$\Omega$ and all nonnegative~$u\in W^{1,p}_{c}(\Omega)$ such that~$u^{p}/v^{p-1}\in W^{1,p}_{c}(\Omega)$, that the product and chain rules for~$u^{p}/v^{p-1}$ hold, and that~$vw$ satisfies the product rule for~$w\triangleq u/v$, it holds that
	$$Q[vw]=\int_{\Omega} \left(|v\nabla w+w\nabla v|^{p}_{\mathcal{A}}-w^{p}|\nabla v|^{p}_{\mathcal{A}}-pw^{p-1}v\mathcal{A}(x,\nabla v)\cdot\nabla w\right)\dx.$$
\elemma
By virtue of Lemma \ref{GRF} and the estimates in Section \ref{dbregman}, we may easily conclude the following simplified energies. %\Hmm{This answers your question.}

Lemmas \ref{psevec} and \ref{GRF} imply the following lemma.
%The following lemma is based on \cite[Lemma 4.12]{HPR} and Lemma \ref{psevec}.
\begin{lem}[The simplified energy for~$Q_{p,p,a,V}$]\label{simpv} % Please note that this is a version of the simplified energy assertion which is probably not equivalent to both assumptions \ref{ass3} and \ref{ass4}, but it is not surprising that it is the counterpart of Fischer's simplified energy.
% and let~$\mathcal{A}(x,\xi)=\nabla_{\xi}F(x,\xi)=\nabla_{\xi}(\frac{1}{p}H(x,\xi)^{p})$ for almost all~$x\in\Omega$. %~$\mathcal{A}(x,\xi)=\nabla_{\xi}F(x,\xi)=\nabla_{\xi}(\frac{1}{p}H(x,\xi)^{p})$ for almost all~$x\in\Omega$, 
Fix a positive
solution $u\in W^{1,p}_{\loc}(\Omega)\cap C(\Omega)$ of $Q'_{p,p,a,V}[v]=0$ in $\Gw$.  Then for all nonnegative~$\psi\in W^{1,p}(\Omega)\cap C_{c}(\Omega)$,
%If, in addition,~$\mathcal{A}$ satisfies Assumption either \ref{ass3} or \ref{ass4},
$$Q_{p,p,a,V}[u\psi]\asymp\int_{\Omega}\sum_{i=1}^{n}u^{2}|(\partial_{i} \psi)'|^{2}\left(\psi|(\partial_{i} u)'|+u|(\partial_{i} \psi)'|\right)^{p-2}\dx,$$ 
where both equivalence constants depend only on~$p$.
%where~$C(\mathcal{A})$ is as in Assumption \ref{cass}.
%Similarly, if~$v\in W^{1,p}_{\loc}(\Omega)$ is a positive supersolution of~$Q'[f]=0$ and~$\mathcal{A}$ satisfies assumptions \ref{Fgrad} and \ref{ass4} with all the other above conditions on $u$ and $v$, then
% $$Q_{p,\mathcal{A},V}[vw]\geq C(p,\mathcal{A})\int_{\Omega}v^{2}|\nabla w|^{2}_{\mathcal{A}}\left(w|\nabla v|_{\mathcal{A}}+v|\nabla w|_{\mathcal{A}}\right)^{p-2}\dx,$$
%where~$C(p,\mathcal{A})$ is as in Assumption \ref{ass4}.
\end{lem}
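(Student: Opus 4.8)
The plan is to read off the integrand of the ground state representation formula (Lemma \ref{GRF}) as a pointwise Bregman distance of $|\cdot|_{p,a}^{p}$, and then apply the two-sided estimate of Lemma \ref{psevec}, finally integrating the resulting pointwise equivalence of two nonnegative functions over $\Omega$.

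First I would apply Lemma \ref{GRF} with the role of its ``solution $v$'' played by $u$ and the role of its ratio ``$w$'' played by $\psi$, so that its ``$vw$'' is the present $u\psi$. Its structural hypotheses are readily checked: $u$ is positive and continuous, hence locally bounded above and away from $0$, and $u\in W^{1,p}_{\loc}(\Omega)$, while $\psi\in W^{1,p}(\Omega)\cap C_{c}(\Omega)$; thus $u\psi\in W^{1,p}_{c}(\Omega)$, $u\psi^{p}=(u\psi)^{p}/u^{p-1}\in W^{1,p}_{c}(\Omega)$, and the relevant product and chain rules hold on $\supp\psi$. This gives
$$Q_{p,p,a,V}[u\psi]=\int_{\Omega}\Big(|\psi\nabla u+u\nabla\psi|_{p,a}^{p}-\psi^{p}|\nabla u|_{p,a}^{p}-p\,\psi^{p-1}u\,\mathcal{A}(x,\nabla u)\cdot\nabla\psi\Big)\dx.$$
Next I would set $\xi\triangleq\psi\nabla u$ and $\eta\triangleq u\nabla\psi$, so that $\xi+\eta=\nabla(u\psi)$. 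Using $\psi\ge 0$ together with the $1$-homogeneity of $|\cdot|_{p,a}$ and the $(p-1)$-homogeneity of $\mathcal{A}(x,\cdot)$ (Theorem \ref{thm_1}), one has $|\psi\nabla u|_{p,a}^{p}=\psi^{p}|\nabla u|_{p,a}^{p}$ and $\mathcal{A}(x,\psi\nabla u)=\psi^{p-1}\mathcal{A}(x,\nabla u)$, hence $\psi^{p-1}u\,\mathcal{A}(x,\nabla u)\cdot\nabla\psi=\mathcal{A}(x,\xi)\cdot\eta$; and by Corollary \ref{diffe} (with $s=p$), $\mathcal{A}(x,\xi)\cdot\eta=\sum_{i=1}^{n}a_{i}(x)|\xi_{i}|^{p-2}\xi_{i}\eta_{i}$. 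Therefore the integrand equals
$$|\xi+\eta|_{p,a}^{p}-|\xi|_{p,a}^{p}-p\sum_{i=1}^{n}a_{i}(x)|\xi_{i}|^{p-2}\xi_{i}\eta_{i}=D_{|\cdot|_{p,a}^{p}}(\xi+\eta,\xi)\ge 0.$$

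Finally, Lemma \ref{psevec} yields, for almost every $x\in\Omega$,
$$D_{|\cdot|_{p,a}^{p}}(\xi+\eta,\xi)\asymp\sum_{i=1}^{n}|\eta_{i}'|^{2}\big(|\eta_{i}'|+|\xi_{i}'|\big)^{p-2},$$
with both equivalence constants depending only on $p$. Since $\xi_{i}'=\sqrt[p]{a_{i}(x)}\,\psi\,\partial_{i}u=\psi\,(\partial_{i}u)'$ and $\eta_{i}'=\sqrt[p]{a_{i}(x)}\,u\,\partial_{i}\psi=u\,(\partial_{i}\psi)'$, and $u>0$, $\psi\ge 0$, the right-hand side is exactly $\sum_{i=1}^{n}u^{2}|(\partial_{i}\psi)'|^{2}\big(\psi|(\partial_{i}u)'|+u|(\partial_{i}\psi)'|\big)^{p-2}$. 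Integrating the pointwise equivalence of these two nonnegative integrands over $\Omega$ gives the asserted $\asymp$ with constants depending only on $p$. The only point needing any care is the bookkeeping in the first two steps: matching the notation of Lemma \ref{GRF} to the present statement, verifying its Sobolev/calculus hypotheses, and using the homogeneity of $\mathcal{A}$ to identify the representation-formula integrand with the Bregman distance at $\xi=\psi\nabla u$, $\eta=u\nabla\psi$; beyond this there is no genuine obstacle.
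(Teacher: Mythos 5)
Your proposal is correct and follows exactly the route the paper intends: the paper states Lemma \ref{simpv} as an immediate consequence of the ground state representation (Lemma \ref{GRF}) and the two-sided Bregman estimate (Lemma \ref{psevec}), and your argument simply spells out the bookkeeping (the substitution $\xi=\psi\nabla u$, $\eta=u\nabla\psi$, the homogeneity identities, and the pointwise application of Lemma \ref{psevec}) that the paper leaves implicit.
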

 %\cite[Lemma 4.12]{HPR} and
 Lemmas \ref{pgs1}, \ref{bregul}, \ref{pgls}, \ref{pgs2}, and \ref{GRF} imply the following lemma.
\begin{lem}[The simplified energy for~$Q_{p,s,a,V}:s<p$]\label{simpsv}
Suppose that $1<s<p<\infty$ and let~$H(x,\xi)=|\xi|_{s,a}$.
	Fix a positive
	solution $u\in W^{1,p}_{\loc}(\Omega)\cap C(\Omega)$ of $Q'_{p,s,a,V}[v]=0$ in $\Gw$. Then for some positive constants~$c(p,s),C(p,s)$, and~$C(p,s,n)$, and all nonnegative~$\psi\in W^{1,p}(\Omega)\cap C_{c}(\Omega)$,
	%If, in addition,~$\mathcal{A}$ satisfies Assumption either \ref{ass3} or \ref{ass4},
		\bequationn
	Q_{p,s,a,V}[u\psi]\geq c(p,s)\int_{\Omega}R_{1}(\psi\nabla u,u\nabla\psi;M)\dx+c(p,s)\int_{\Omega}R_{2}(\psi\nabla u,u\nabla\psi)\dx,
	\eequationn
	and
	\bequationn
	Q_{p,s,a,V}[u\psi]\leq C(p,s)	\int_{\Omega_{1}}R_{1}(\psi\nabla u,u\nabla\psi;M)\dx+C(p,s,n)\int_{\Omega_{2}}R_{2}(\psi\nabla u,u\nabla\psi)\dx,
	\eequationn	%$$Q_{p,s,V}[u\psi]\asymp\int_{\Omega}\sum_{i=1}^{n}u^{2}|\partial_{i} \psi|'^{2}\left(\psi|\partial_{i} u|'+u|\partial_{i} \psi|'\right)^{p-2}\dx,$$
where $\Omega_{1}\triangleq \{x\in \Omega~|~|\psi\nabla u|_{s,a}\leq|u\nabla\psi|_{s,a}\}$ and  $\Omega_{2}\triangleq \{x\in \Omega~|~|\psi\nabla u|_{s,a}>|u\nabla\psi|_{s,a}\}$. In particular, for all nonnegative~$\psi\in W^{1,p}(\Omega)\cap C_{c}(\Omega)$,
\bequationn
	Q_{p,s,a,V}[u\psi]\asymp	\int_{\Omega_{1}}R_{1}(\psi\nabla u,u\nabla\psi;M)\dx+\int_{\Omega_{2}}R_{2}(\psi\nabla u,u\nabla\psi)\dx,
	\eequationn
 where both equivalence constants depend only on~$p,s$, and~$n$.% and 
%the positive constants~$c$ and~$C$ are independent of~$u$ and~$\psi$.	%in particular, 
	%\bequationn
	%Q_{p,s,V}[u\psi]\asymp	\int_{\Omega}[R_{1}(\psi\nabla u,u\nabla\psi;M)+R_{2}(\psi\nabla u,u\nabla\psi)]\dx,
	%\eequationn	%where~$C(\mathcal{A})$ is as in Assumption \ref{cass}.
	%Similarly, if~$v\in W^{1,p}_{\loc}(\Omega)$ is a positive supersolution of~$Q'[f]=0$ and~$\mathcal{A}$ satisfies assumptions \ref{Fgrad} and \ref{ass4} with all the other above conditions on $u$ and $v$, then
	% $$Q_{p,\mathcal{A},V}[vw]\geq C(p,\mathcal{A})\int_{\Omega}v^{2}|\nabla w|^{2}_{\mathcal{A}}\left(w|\nabla v|_{\mathcal{A}}+v|\nabla w|_{\mathcal{A}}\right)^{p-2}\dx,$$
	%where~$C(p,\mathcal{A})$ is as in Assumption \ref{ass4}.
\end{lem}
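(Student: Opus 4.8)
The plan is to reduce the whole statement to the pointwise Bregman estimates already proved in Section~\ref{dbregman}. The first step is to apply Lemma~\ref{GRF} with $v=u$ and $w=\psi$, after checking its hypotheses (the product and chain rules for $\psi$, for $\psi^{p}/u^{p-1}$, and for $u\cdot(\psi/u)$; these hold for a positive solution $u$ and a nonnegative $\psi\in W^{1,p}(\Omega)\cap C_{c}(\Omega)$, using the $W^{1,\infty}_{\loc}$-regularity of $u$ furnished by Assumption~\ref{ngradb} when $1<p<2$, and an approximation argument otherwise). This gives
\[
Q_{p,s,a,V}[u\psi]=\int_{\Omega}\Big(|u\nabla\psi+\psi\nabla u|_{s,a}^{p}-\psi^{p}|\nabla u|_{s,a}^{p}-p\psi^{p-1}u\,\mathcal{A}(x,\nabla u)\cdot\nabla\psi\Big)\dx.
\]
Next I would substitute the explicit formula $\mathcal{A}(x,\xi)_{i}=a_{i}(x)|\xi|_{s,a}^{p-s}|\xi_{i}|^{s-2}\xi_{i}$ obtained from Corollary~\ref{diffe}, and use the $p$-homogeneity of $|\cdot|_{s,a}$ to absorb the powers of the positive function $\psi$: with $\xi\triangleq\psi\nabla u$ and $\eta\triangleq u\nabla\psi$ taken pointwise, the integrand collapses exactly to $D_{|\cdot|_{s,a}^{p}}(\xi+\eta,\xi)$, so that
\[
Q_{p,s,a,V}[u\psi]=\int_{\Omega}D_{|\cdot|_{s,a}^{p}}\big(\psi\nabla u+u\nabla\psi,\;\psi\nabla u\big)\dx .
\]

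For the lower bound, Lemma~\ref{pgs1} gives $D_{|\cdot|_{s,a}^{p}}(\xi+\eta,\xi)\geq c(p,s)R_{2}(\xi,\eta)$ and Lemma~\ref{bregul} gives $D_{|\cdot|_{s,a}^{p}}(\xi+\eta,\xi)\geq c(p,s)R_{1}(\xi,\eta;M)$, both pointwise for all $\xi,\eta$; averaging yields $D_{|\cdot|_{s,a}^{p}}(\xi+\eta,\xi)\geq \frac{1}{2} c(p,s)\big(R_{1}(\xi,\eta;M)+R_{2}(\xi,\eta)\big)$, and integrating over $\Omega$ with $\xi=\psi\nabla u$, $\eta=u\nabla\psi$ yields the first displayed inequality of the lemma.

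For the upper bound I would decompose $\Omega=\Omega_{1}\cup\Omega_{2}$ exactly as in the statement (both sets are measurable, since $|\psi\nabla u|_{s,a}$ and $|u\nabla\psi|_{s,a}$ are measurable functions). On $\Omega_{1}$, where $|\xi|_{s,a}\leq|\eta|_{s,a}$, Lemma~\ref{pgls} with $l=M$ gives $D_{|\cdot|_{s,a}^{p}}(\xi+\eta,\xi)\leq C(p,s)R_{1}(\xi,\eta;M)$; on $\Omega_{2}$, where $|\xi|_{s,a}>|\eta|_{s,a}$, Lemma~\ref{pgs2} gives $D_{|\cdot|_{s,a}^{p}}(\xi+\eta,\xi)\leq C(p,s,n)R_{2}(\xi,\eta)$. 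Integrating the identity for $Q_{p,s,a,V}[u\psi]$ separately over $\Omega_{1}$ and $\Omega_{2}$ gives the second displayed inequality. Finally, the asymptotic equivalence follows by combining the two bounds: the $\leq$ part is the upper estimate just obtained, while the matching $\geq$ part comes from the lower bound of the previous paragraph after restricting the domains of integration, namely $\int_{\Omega}R_{1}(\psi\nabla u,u\nabla\psi;M)\dx\geq\int_{\Omega_{1}}R_{1}(\psi\nabla u,u\nabla\psi;M)\dx$ and $\int_{\Omega}R_{2}(\psi\nabla u,u\nabla\psi)\dx\geq\int_{\Omega_{2}}R_{2}(\psi\nabla u,u\nabla\psi)\dx$; the resulting equivalence constants depend only on $p$, $s$, and $n$.

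I do not expect a serious obstacle: the substantive work — the pointwise two-sided control of $D_{|\cdot|_{s,a}^{p}}$ by $R_{1}$ and $R_{2}$ — has already been carried out in Lemmas~\ref{psevec}, \ref{pgs1}, \ref{bregul}, \ref{pgls}, and \ref{pgs2}. The only two points deserving genuine care are the verification of the product/chain-rule hypotheses in Lemma~\ref{GRF} and the homogeneity bookkeeping that identifies the Green-type integrand with $D_{|\cdot|_{s,a}^{p}}(\psi\nabla u+u\nabla\psi,\psi\nabla u)$; everything after that is a direct application of the pointwise estimates together with the decomposition $\Omega=\Omega_{1}\cup\Omega_{2}$.
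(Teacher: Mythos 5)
Your proposal is correct and follows exactly the route the paper intends: the paper's entire proof is the one-line remark that Lemmas~\ref{pgs1}, \ref{bregul}, \ref{pgls}, \ref{pgs2}, and \ref{GRF} imply the result, and you have spelled out precisely how — the ground-state identity of Lemma~\ref{GRF} turns $Q_{p,s,a,V}[u\psi]$ into $\int_{\Omega}D_{|\cdot|_{s,a}^{p}}(\psi\nabla u+u\nabla\psi,\psi\nabla u)\dx$ via the homogeneity bookkeeping, after which the pointwise two-sided estimates are applied globally for the lower bound and on the decomposition $\Omega=\Omega_{1}\cup\Omega_{2}$ for the upper bound. Your observation that Assumption~\ref{ngradb} is what supplies the $W^{1,\infty}_{\loc}$-regularity needed to justify the product/chain-rule hypotheses of Lemma~\ref{GRF} precisely when $p<2$ (since $s<p$ forces $\tilde{\delta}=\max\{s,2\}=2$ in the relevant case) is also accurate.
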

We obtain the following lemma by Lemmas \ref{pls1}, \ref{bregul}, \ref{pgls}, \ref{pls2}, and \ref{GRF}. Note that in~$\Omega_{2}$, if $\varphi\nabla u=0$, then~$\varphi=0$ inasmuch as~$\psi\nabla u\neq 0$ and hence up to a Lebesgue null set,~$\nabla\varphi=0$.% 
\begin{lem}[The simplified energy for~$Q_{p,s,a,V}:p<s$]\label{simpsvn}
	Suppose that $1<p<s<\infty$ and let~$H(x,\xi) = |\xi|_{s,a}$.
	Fix a positive
	solution $u\in W^{1,p}_{\loc}(\Omega)\cap C(\Omega)$ of $Q'_{p,s,a,V}[v]=0$ in $\Gw$. Then for all nonnegative~$\psi,\varphi\in W^{1,p}(\Omega)\cap C_{c}(\Omega)$,
	%If, in addition,~$\mathcal{A}$ satisfies Assumption either \ref{ass3} or \ref{ass4},
	\bequationn
	Q_{p,s,a,V}[u\psi]\geq c(p,s)\int_{\Omega_{1}}R_{1}(\psi\nabla u,u\nabla\psi;m)\dx+c(p,s)\int_{\Omega_{2}}R_{2}(\psi\nabla u,u\nabla\psi)\dx,
	\eequationn
	and
	\bequationn
	Q_{p,s,a,V}[u\varphi]\leq C(p,s)\int_{\Omega_{1}}R_{1}(\varphi\nabla u,u\nabla\varphi;m)\dx+C(p,s)\int_{\Omega_{2}'}R_{2}(\varphi\nabla u,u\nabla\varphi)\dx,
	\eequationn
	%$$Q_{p,s,V}[u\psi]\asymp\int_{\Omega}\sum_{i=1}^{n}u^{2}|\partial_{i} \psi|'^{2}\left(\psi|\partial_{i} u|'+u|\partial_{i} \psi|'\right)^{p-2}\dx,$$
	where $\Omega_{1}\triangleq \{x\in \Omega~|~|\psi\nabla u|_{s,a}\leq|u\nabla\psi|_{s,a}\}$,~$\Omega_{2}\triangleq \{x\in \Omega~|~|\psi\nabla u|_{s,a}>|u\nabla\psi|_{s,a}\}$, and~$\Omega_{2}'\triangleq\Omega_{2}\setminus\{x\in\Omega~|~\varphi\nabla u=0\}$. In particular, for all nonnegative~$\psi\in W^{1,p}(\Omega)\cap C_{c}(\Omega)$,
 \bequationn
	Q_{p,s,a,V}[u\psi]\asymp \int_{\Omega_{1}}R_{1}(\psi\nabla u,u\nabla\psi;m)\dx+\int_{\Omega''_{2}}R_{2}(\psi\nabla u,u\nabla\psi)\dx,
	\eequationn
where~$\Omega_{2}''\triangleq\Omega_{2}\setminus\{x\in\Omega~|~\psi\nabla u=0\}$ and both equivalence constants depend only on~$p$ and~$s$.	%where~$C(\mathcal{A})$ is as in Assumption \ref{cass}.
	%Similarly, if~$v\in W^{1,p}_{\loc}(\Omega)$ is a positive supersolution of~$Q'[f]=0$ and~$\mathcal{A}$ satisfies assumptions \ref{Fgrad} and \ref{ass4} with all the other above conditions on $u$ and $v$, then
	% $$Q_{p,\mathcal{A},V}[vw]\geq C(p,\mathcal{A})\int_{\Omega}v^{2}|\nabla w|^{2}_{\mathcal{A}}\left(w|\nabla v|_{\mathcal{A}}+v|\nabla w|_{\mathcal{A}}\right)^{p-2}\dx,$$
	%where~$C(p,\mathcal{A})$ is as in Assumption \ref{ass4}.
	%in particular, 
		%\bequationn
		%Q_{p,s,V}[u\varphi]\asymp  \int_{\Omega_{1}}R_{1}(\varphi\nabla u,u\nabla\varphi;m)\dx+\int_{\Omega_{2}}R_{2}(\varphi\nabla u,u\nabla\varphi)\dx,
		%\eequationn
	%}
\end{lem}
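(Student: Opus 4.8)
The plan is to reuse the structure of the proof of Lemma~\ref{simpsv}: first rewrite $Q_{p,s,a,V}[u\psi]$ as the integral of a pointwise Bregman distance by means of the Picone-type identity, and then insert the two-sided estimates of Section~\ref{dbregman}. Since $u$ is a positive solution of $Q'_{p,s,a,V}[v]=0$ and $u\psi^{p}\in W^{1,p}_{c}(\Omega)$ --- this, together with the product and chain rules required in Lemma~\ref{GRF}, being routine from $\psi\in W^{1,p}(\Omega)\cap C_{c}(\Omega)$, $u\in W^{1,p}_{\loc}(\Omega)\cap C(\Omega)$, and $u>0$ --- Lemma~\ref{GRF} applied to the positive solution $u$ with $w=\psi$ gives
$$Q_{p,s,a,V}[u\psi]=\int_{\Omega}\Bigl(|u\nabla\psi+\psi\nabla u|^{p}_{s,a}-\psi^{p}|\nabla u|^{p}_{s,a}-p\psi^{p-1}u\,\mathcal{A}(x,\nabla u)\cdot\nabla\psi\Bigr)\dx.$$
Using Corollary~\ref{diffe} and Notation~\ref{Fgrad} to express $\mathcal{A}(x,\xi)$ coordinate-wise as $a_{i}(x)|\xi|^{p-s}_{s,a}|\xi_{i}|^{s-2}\xi_{i}$, together with the nonnegativity of $\psi$ and the positive homogeneity of $|\cdot|_{s,a}$, one checks that the integrand equals $D_{|\cdot|_{s,a}^{p}}(\psi\nabla u+u\nabla\psi,\psi\nabla u)$, i.e. the Bregman distance of $|\cdot|^{p}_{s,a}$ evaluated at $\xi=\psi\nabla u$ and $\eta=u\nabla\psi$ (so that $\xi+\eta=\nabla(u\psi)$). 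Thus $Q_{p,s,a,V}[u\psi]=\int_{\Omega}D_{|\cdot|_{s,a}^{p}}(\psi\nabla u+u\nabla\psi,\psi\nabla u)\dx$, and likewise with $\psi$ replaced by $\varphi$.

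Next I would split the integral over $\Omega=\Omega_{1}\cup\Omega_{2}$. For the lower bound: on $\Omega_{1}$, where $|\psi\nabla u|_{s,a}\le|u\nabla\psi|_{s,a}$, Lemma~\ref{pgls} with $l=m$ gives $D_{|\cdot|_{s,a}^{p}}(\psi\nabla u+u\nabla\psi,\psi\nabla u)\asymp R_{1}(\psi\nabla u,u\nabla\psi;m)$, hence $\ge c(p,s)R_{1}(\psi\nabla u,u\nabla\psi;m)$; on $\Omega_{2}$, where $|\psi\nabla u|_{s,a}>|u\nabla\psi|_{s,a}$ so that $\psi\nabla u\neq0$, Lemma~\ref{pls2} gives $D_{|\cdot|_{s,a}^{p}}(\psi\nabla u+u\nabla\psi,\psi\nabla u)\ge c(p,s)R_{2}(\psi\nabla u,u\nabla\psi)$. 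Since the Bregman distance of a convex function is nonnegative, summing the two contributions yields the lower bound. For the upper bound with $\varphi$: on $\Omega_{1}$ the unconditional upper estimate of Lemma~\ref{bregul} gives $D_{|\cdot|_{s,a}^{p}}(\varphi\nabla u+u\nabla\varphi,\varphi\nabla u)\le C(p,s)R_{1}(\varphi\nabla u,u\nabla\varphi;m)$; on $\Omega_{2}'=\Omega_{2}\setminus\{\varphi\nabla u=0\}$, where $\varphi\nabla u\neq0$, Lemma~\ref{pls1} gives $D_{|\cdot|_{s,a}^{p}}(\varphi\nabla u+u\nabla\varphi,\varphi\nabla u)\le C(p,s)R_{2}(\varphi\nabla u,u\nabla\varphi)$; and on $\Omega_{2}\setminus\Omega_{2}'$ the integrand vanishes, because on $\Omega_{2}$ one has $\psi\nabla u\neq0$ and hence $\nabla u\neq0$ a.e., so $\{\varphi\nabla u=0\}\cap\Omega_{2}$ agrees up to a null set with $\{\varphi=0\}\cap\Omega_{2}$, on which also $\nabla\varphi=0$ a.e., whence $\varphi\nabla u=u\nabla\varphi=0$ there. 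Adding the contributions gives the upper bound.

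Finally, the displayed equivalence follows by specializing the upper bound to $\varphi=\psi$: on $\Omega_{2}$ one has $\psi\nabla u\neq0$ everywhere, so $\Omega_{2}'=\Omega_{2}''=\Omega_{2}$, and combining with the lower bound sandwiches $Q_{p,s,a,V}[u\psi]$ between constant multiples (depending only on $p$ and $s$) of $\int_{\Omega_{1}}R_{1}(\psi\nabla u,u\nabla\psi;m)\dx+\int_{\Omega_{2}''}R_{2}(\psi\nabla u,u\nabla\psi)\dx$. I expect the only genuinely delicate point to be the bookkeeping on $\Omega_{2}$ when $\varphi\neq\psi$: one must justify that $\nabla u\neq0$ a.e. on $\Omega_{2}$, so that the part of $\Omega_{2}$ on which $\varphi\nabla u$ vanishes contributes nothing and Lemma~\ref{pls1} (which requires $\varphi\nabla u\neq0$, i.e. a non-degenerate base point for the Bregman distance in the case $p<s$) is applicable on the remaining set $\Omega_{2}'$. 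Everything else is a direct substitution of the pointwise Bregman estimates of Section~\ref{dbregman} into the Picone identity.
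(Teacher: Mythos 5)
Your proposal is correct and follows exactly the route the paper intends: the paper gives no written proof beyond citing Lemmas \ref{pls1}, \ref{bregul}, \ref{pgls}, \ref{pls2}, and \ref{GRF} together with the remark that on $\Omega_{2}$ one has $\psi\nabla u\neq 0$, so $\varphi\nabla u=0$ forces $\varphi=0$ and hence $\nabla\varphi=0$ a.e.\ there. Your write-up supplies precisely these steps (Picone/Bregman identity via Lemma \ref{GRF}, the split over $\Omega_{1}\cup\Omega_{2}$, and the degenerate-set bookkeeping on $\Omega_{2}\setminus\Omega_{2}'$), so no further changes are needed.
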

\subsection{The Maz'ya-type characterization}\label{secmazya}
In this subsection, we establish the Maz'ya-type characterization for the two classes of norms~$|\cdot|_{s,a}$ and $\sqrt[p]{|\cdot|_{s,a}^{p}+|\cdot|_{A}^{p}}$.

Due to the simplified energies for~$Q_{p,s,a,V}$,  we obtain the following characterization of Hardy-weights of~$Q_{p,s,a,V}$, extending the recent result \cite[Theorem 1.2]{Das} of Das and Pinchover. See also \cite{Maz'ya}. Recall our notations $\Vert g\Vert_{\mathcal{H}}$ and $\Vert g\Vert_{u}$ (see Definition~\ref{hardy_weight_norm}). The proof is similar to that of \cite[Theorem 1.2]{Das}.
\btheorem\label{psmazya}
Suppose that~$1<p,s<\infty$ and let~$H(x,\xi)=|\xi|_{s,a}$ for almost all~$x\in\Omega$ and all~$\xi\in\R^{n}$.
%and let~$\mathcal{A}(x,\xi)=\nabla_{\xi}F(x,\xi)=\nabla_{\xi}(\frac{1}{p}H_{p}(x,\xi)^{p})$ for almost all~$x\in\Omega$.
Fix a positive solution~$u\in W^{1,p}_{\loc}(\Omega)\cap C(\Omega)$ of the equation~$Q'_{p,s,a,V}[v]=0$ in $\Gw$. Then for all
%$$\mathcal{H}_{p,s,V}(\Omega)=\{g\in L^{1}_{\loc}(\Omega)~|~\Vert g\Vert_{p,s,V}<\infty\}.$$
$g\in L^{1}_{\loc}(\Omega)$,~$g\in\mathcal{H}_{p,s,a,V}(\Omega)$ if and only if~$\Vert g\Vert_{u}<\infty$.
%Suppose, in addition to Assumptions \ref{ass9}, that Assumption \ref{ngradb} holds.
In addition, there exists a positive constant
\begin{align*}
C=\begin{cases}			C(p,s,n)&\mbox{if}~1<s<p<\infty,\\			C(p,s)&\mbox{if}~1<p\leq s<\infty,\\
		\end{cases}
\end{align*}
%~$C=C(p,s,n)$($1<s<p<\infty$) or~$C=C(p,s)$($1<p\leq s<\infty$) %which does not depend on~$g\in L^{1}_{\loc}(\Omega)$, 
such that for all~$g\in \mathcal{H}_{p,s,a,V}(\Omega)$, $$\Vert g\Vert_{u}\leq \Vert g\Vert_{\mathcal{H}}\leq C\Vert g\Vert_{u}.$$% and $B_{g,s,a,V}$ is a norm on~$\mathcal{H}_{p,s,a,V}(\Omega)$.
In particular,~$(\mathcal{H}_{p,s,a,V}(\Omega),\Vert\cdot\Vert_{u})$ is a Banach space.
\etheorem
\iffalse
\btheorem\label{psmazya}
Suppose that $1<p,s<\infty$ and let~$H(x,\xi)\triangleq|\xi|_{s,a}$ for almost all~$x\in\Omega$ and all~$\xi\in\R^{n}$. 
%and let~$\mathcal{A}(x,\xi)=\nabla_{\xi}F(x,\xi)=\nabla_{\xi}(\frac{1}{p}H_{p}(x,\xi)^{p})$ for almost all~$x\in\Omega$.
Fix a positive solution~$u\in W^{1,p}_{\loc}(\Omega)\cap C(\Omega)$ of the equation~$Q'_{p,s,a,V}[v]=0$ in $\Gw$. Then for all
%$$\mathcal{H}_{p,s,V}(\Omega)=\{g\in L^{1}_{\loc}(\Omega)~|~\Vert g\Vert_{p,s,V}<\infty\}.$$
$g\in L^{1}_{\loc}(\Omega)$,~$g\in\mathcal{H}_{p,s,a,V}(\Omega)$ if and only if~$\Vert g\Vert_{u}<\infty$.
%Suppose, in addition to Assumptions \ref{ass9}, that Assumption \ref{ngradb} holds.
In addition, there exists a positive constant
\begin{align*}
C=\begin{cases}			C(p,s,n)&\mbox{if}~1<s<p<\infty,\\			C(p,s)&\mbox{if}~1<p\leq s<\infty,\\
		\end{cases}
\end{align*}
%~$C=C(p,s,n)$($1<s<p<\infty$) or~$C=C(p,s)$($1<p\leq s<\infty$) %which does not depend on~$g\in L^{1}_{\loc}(\Omega)$, 
such that for all~$g\in \mathcal{H}_{p,s,a,V}(\Omega)$, $$\Vert g\Vert_{u}\leq \Vert g\Vert_{\mathcal{H}}\leq C\Vert g\Vert_{u}.$$% and $B_{g,s,a,V}$ is a norm on~$\mathcal{H}_{p,s,a,V}(\Omega)$.
In particular,~$(\mathcal{H}_{p,s,a,V}(\Omega),\Vert\cdot\Vert_{u})$ is a Banach space.
\etheorem
\fi
\bproof
\iffalse
Our proof is similar to that of \cite[Theorem 1.2]{Das}. Suppose first that~$g$ is a Hardy-weight of~$Q_{p,\mathcal{A},V}$ in~$\Omega$. Let~$\psi$ be an arbitrary measurable function such that~$\psi u\in W^{1,p}(\Omega)\cap C_{c}(\Omega)$ and~$\psi\geq 1$ on~$K$ for a fixed compact subset~$K$ of~$\Omega$. Then
$$\int_{K}|g||u|^{p}\dx\leq \int_{\Omega}|g||\psi u|^{p}\dx\leq B(g,\mathcal{A},V)Q_{p,\mathcal{A},V}[\psi u].$$ It follows that
$$\int_{K}|g||u|^{p}\dx\leq B(g,\mathcal{A},V)\capacity_{p,V}(K;u,\Omega).$$ Furthermore, $\Vert g\Vert_{p,V}\leq B(g,\mathcal{A},V)$.
\fi
The proof of the necessity and the inequality~$\Vert g\Vert_{u}\leq \Vert g\Vert_{\mathcal{H}}$ follows from Lemma \ref{bhome}. Now, assume that~$\Vert g\Vert_{u}<\infty$ and~$p\neq s$ because the proof for~$p=s$ is similar and simpler by virtue of Lemma \ref{simpv}. Then for all compact subsets~$K$ of $\Omega$,
\begin{align*}%\label{hwchin}
\int_{K}|g||u|^{p}\dx\leq \Vert g\Vert_{u}\capacity_{p,s,a,V}(K,u).
\end{align*} Let~$\mu=|g||u|^{p}\dx$. For every compact subset~$F$ of~$\Omega$ and nonnegative~$\psi\in W^{1,p}(\Omega)\cap C_{c}(\Omega)$, by \cite[Theorem 1.9]{Maly97}, 
\begin{align*}%\label{pscapest0}
&\int_{\Omega}|g||\psi u|^{p}\dx=p\int_{0}^{\infty}\mu(\{\psi\geq t\})t^{p-1}\dt=p\sum_{j=-\infty}^{j=\infty}\int_{2^{j}}^{2^{j+1}}\mu(\{\psi\geq t\})t^{p-1}\dt\notag\\
	&\leq (2^{p}-1)\sum_{j=-\infty}^{j=\infty}2^{pj}\mu(\{\psi\geq 2^{j}\})\leq (2^{p}-1)\Vert g\Vert_{u}\sum_{j=-\infty}^{j=\infty}2^{pj}\capacity_{p,s,a,V}(\{\psi\geq 2^{j}\},u).
\end{align*}
 We denote $ \max\{2/p,2/s,1,s/p\} $ briefly by $\theta$. Recall that~$m=\min\{s,2\}$ and~$M=\max\{s,2\}$. For every~$j\in \mathbb{Z}$, we define
	\begin{equation*}
		\psi_{j}\triangleq
		\begin{cases}
			0&\mbox{if}~\psi\leq 2^{j-1},\\
			\big(\frac{\psi}{2^{j-1}}-1\big)^{\theta}&\mbox{if}~2^{j-1}\leq\psi\leq 2^{j},\\
			1&\mbox{if}~2^{j}\leq\psi,
		\end{cases}
	\end{equation*}
	and denote~$\frac{\psi}{2^{j-1}}-1$ by~$\vartheta_{j}$. %Note that~$\psi_{j}=\min\big\{1,\max\big\{\frac{\psi}{2^{j-1}}-1,0\big\}\big\}$. 
 Then~$\psi_{j}\in W^{1,p}(\Omega)\cap C_{c}(\Omega)$. Invoking Lemma \ref{simpsv} for~$s<p$  and Lemma \ref{simpsvn} for~$p<s$, we see at once that
	\begin{equation*}\label{pscapest2}
		\capacity_{p,s,a,V}(\{\psi\geq 2^{j}\},u)\leq Q_{p,s,a,V}[\psi_{j}u] \leq
		\begin{cases}
		C\Theta_{1}+C\Theta_{2}&~\mbox{if}~s<p,\\
		C\Gamma_{1}+C\Gamma_{2}&~\mbox{if}~p<s,\\
		\end{cases} 
		%&\triangleq& L_{1}+L_{2},\\
	\end{equation*}
	where 	$$\Theta_{1}\triangleq \int_{\Omega_{1}^{j}}R_{1}(\psi_{j}\nabla u,u\nabla\psi_{j};M)\dx\quad \mbox{and}\quad \Theta_{2}\triangleq\int_{\Omega_{2}^{j}}R_{2}(\psi_{j}\nabla u,u\nabla\psi_{j})\dx$$
	with$$\Omega_{1}^{j}\triangleq \{x\in \Omega~|~|\psi_{j}\nabla u|\leq|u\nabla\psi_{j}|\}\quad \mbox{and}\quad \Omega_{2}^{j}\triangleq \{x\in \Omega~|~|\psi_{j}\nabla u|>|u\nabla\psi_{j}|\},$$
	and
	$$\Gamma_{1}\triangleq \int_{\Omega_{1}}R_{1}(\psi_{j}\nabla u,u\nabla\psi_{j};m)\dx\quad \mbox{and}\quad \Gamma_{2}\triangleq\int_{\Omega_{2}'}R_{2}(\psi_{j}\nabla u,u\nabla\psi_{j})\dx$$
	with$$\Omega_{1}\triangleq \{x\in \Omega~|~|\psi\nabla u|\leq|u\nabla\psi|\},\quad\Omega_{2}\triangleq \{x\in \Omega~|~|\psi\nabla u|>|u\nabla\psi|\},$$ 
$$\mbox{and}\quad\Omega_{2}'\triangleq\Omega_{2}\setminus\{x\in\Omega~|~\psi_{j}\nabla u=0\}.$$
 Moreover, let~$K_{j}\triangleq \{x\in \Omega~|~2^{j-1}\leq\psi\leq 2^{j}\}$.
	\begin{itemize}
		\item We first cope with the first terms~$\Gamma_{1}$ and~$\Theta_{1}$. We compute:	
		\begin{align*}
		%\begin{split}
			\Gamma_{1}&=\int_{\Omega_{1}\cap K_{j}}u^{m}|\nabla\psi_{j}|_{s,a}^{m}(u|\nabla\psi_{j}|_{s,a}+\psi_{j}|\nabla u|_{s,a})^{p-m}\dx\\
			%	&\leq \int_{\Omega_{1}\cap A_{j}}u^{s}|\nabla\rho_{j}|_{s}^{s}(u|\nabla\rho_{j}|_{s}+\rho_{j}|\nabla u|_{s})^{p-s}\dx\\
			&\leq C(p,s)\int_{\Omega_{1}\cap K_{j}}u^{m}\frac{\vartheta_{j}^{m(\theta-1)}|\nabla\psi|_{s,a}^{m}}{2^{m(j-1)}}\bigg(u\frac{\vartheta_{j}^{\theta-1}|\nabla\psi|_{s,a}}{2^{j-1}}+\vartheta_{j}^{\theta}|\nabla u|_{s,a}\bigg)^{p-m}\dx\\
				&\leq C(p,s)\int_{\Omega_{1}\cap K_{j}}u^{m}\frac{\vartheta_{j}^{\sigma}|\nabla\psi|_{s,a}^{m}}{2^{m(j-1)}}\bigg(u\frac{|\nabla\psi|_{s,a}}{2^{j-1}}+|\nabla u|_{s,a}\bigg)^{p-m}\dx\\
				&\leq C(p,s)\int_{\Omega_{1}\cap K_{j}}u^{m}\frac{|\nabla\psi|_{s,a}^{m}}{2^{m(j-1)}}\bigg(u\frac{|\nabla\psi|_{s,a}}{2^{j-1}}+|\nabla u|_{s,a}\bigg)^{p-m}\dx\\
					&\leq \frac{C(p,s)}{2^{pj}}\int_{\Omega_{1}\cap K_{j}}u^{m}|\nabla\psi|_{s,a}^{m}\left(u|\nabla\psi|_{s,a}+\psi|\nabla u|_{s,a}\right)^{p-m}\dx\\
					&=\frac{C(p,s)}{2^{pj}}\int_{\Omega_{1}\cap K_{j}}R_{1}(\psi\nabla u,u\nabla\psi;m)\dx,	
	\end{align*}
where~$\sigma\triangleq\begin{cases}
p(\theta-1)~(\geq 0)& \mbox{if}~p\geq m,\\
p\theta-m~(\geq 0)& \mbox{if}~p<m,\\
\end{cases}$ and the last inequality is because on~$\Omega_{1}\cap K_{j}$,
\begin{align*}\bigg(u\frac{|\nabla\psi|_{s,a}}{2^{j-1}}+|\nabla u|_{s,a}\bigg)^{p-m}\leq \begin{cases}
	\bigg(u\frac{|\nabla\psi|_{s,a}}{2^{j-1}}+\frac{\psi}{2^{j-1}}|\nabla u|_{s,a}\bigg)^{p-m}&~\mbox{if}~p\geq m,\\[4mm]
	\bigg(u\frac{|\nabla\psi|_{s,a}}{2^{j}}+\frac{\psi}{2^{j}}|\nabla u|_{s,a}\bigg)^{p-m}&~\mbox{if}~p<m,
\end{cases}\end{align*}
derived from~$\psi/2^{j-1}\geq 1$ and~$\psi/2^{j}\leq 1$ respectively. Analogously, we obtain 
\begin{equation*}%\label{pscapest}
	\Theta_{1}\leq \frac{C(p,s)}{2^{pj}}\int_{K_{j}}R_{1}(\psi\nabla u,u\nabla\psi;M)\dx.\\
	%\nonumber
\end{equation*}
\item Let us evaluate the second terms~$\Gamma_{2}$ and~$\Theta_{2}$. % In the following estimates for~$\Gamma_{2}$, we can certainly assume that~$\psi_{j}\nabla u\neq 0$ in~$\Omega_{2}$. 
%Let~$\Omega_{2}'\triangleq\Omega_{2}\setminus\{x\in\Omega| \psi_{j}\nabla u=0\}$. 
We calculate:
\begin{align*}%\label{secpr}
	%\begin{split}
\hspace{-5mm}\Gamma_{2}&=\int_{\Omega'_{2}\cap K_{j}}\psi_{j}^{p-s}|\nabla u|_{s,a}^{p-s}\sum_{i=1}^{n}u^{2}|(\partial_{i}\psi_{j})'|^{2}(u|(\partial_{i}\psi_{j})'|+\psi_{j}|(\partial_{i} u)'|)^{s-2}\dx\\
		&\leq C\int_{\Omega'_{2}\cap K_{j}}\psi_{j}^{p-s}|\nabla u|_{s,a}^{p-s}\sum_{i=1}^{n}u^{2}\frac{\vartheta_{j}^{2(\theta-1)}|(\partial_{i}\psi)'|^{2}}{2^{2(j-1)}}\bigg(u\frac{\vartheta_{j}^{\theta-1}|(\partial_{i}\psi)'|}{2^{j-1}}+\vartheta_{j}^{\theta}|(\partial_{i} u)'|\bigg)^{s-2}\dx\\
		&\leq C\int_{\Omega'_{2}\cap K_{j}}\vartheta_{j}^{\tau}\psi_{j}^{p-s}|\nabla u|_{s,a}^{p-s}\sum_{i=1}^{n}u^{2}\frac{|(\partial_{i}\psi)'|^{2}}{2^{2(j-1)}}\bigg(u\frac{|(\partial_{i}\psi)'|}{2^{j-1}}+|(\partial_{i} u)'|\bigg)^{s-2}\dx\\
		&=\frac{C}{2^{sj}}\int_{\Omega'_{2}\cap K_{j}}\vartheta_{j}^{\tau}\psi_{j}^{p-s}|\nabla u|_{s,a}^{p-s}\sum_{i=1}^{n}u^{2}|(\partial_{i}\psi)'|^{2}\left(u|(\partial_{i}\psi)'|+\psi|(\partial_{i} u)'|\right)^{s-2}\dx\notag\\
		&=\frac{C}{2^{sj}}\int_{\Omega'_{2}\cap K_{j}}\vartheta_{j}^{\theta(p-s)+\tau}\left( \frac{\psi}{2^{j}}\right)^{p-s}|\nabla u|_{s,a}^{p-s}\sum_{i=1}^{n}u^{2}|(\partial_{i}\psi)'|^{2}\left(u|(\partial_{i}\psi)'|+\psi|(\partial_{i} u)'|\right)^{s-2}\dx\notag\\
		&\leq\frac{C}{2^{pj}}\int_{\Omega'_{2}\cap K_{j}}R_{2}(\psi\nabla u,u\nabla\psi)\dx,
\end{align*}
where all these constants~$C=C(p,s)$,$$\tau=\begin{cases}
s(\theta-1)~(\geq 0)& \mbox{if}~s\geq 2,\\
s\theta-2~(\geq 0)& \mbox{if}~s<2,\\
\end{cases}\quad\mbox{and}\quad\theta(p-s)+\tau=\begin{cases}
p\theta-s~(\geq 0)& \mbox{if}~s\geq 2,\\
p\theta-2~(\geq 0)& \mbox{if}~s<2.\\
\end{cases}$$ 
The strategy for~$\Theta_{2}$ is slightly different, that is,
\begin{align*}
	%\begin{split}
 \Theta_{2}&=\int_{\Omega_{2}^{j}\cap K_{j}}\psi_{j}^{p-s}|\nabla u|_{s,a}^{p-s}\sum_{i=1}^{n}u^{2}|(\partial_{i}\psi_{j})'|^{2}(u|(\partial_{i}\psi_{j})'|+\psi_{j}|(\partial_{i} u)'|)^{s-2}\dx\\
	&=\int_{\Omega_{2}^{j}\cap K_{j}}\vartheta_{j}^{\theta(p-s)}|\nabla u|_{s,a}^{p-s}\sum_{i=1}^{n}u^{2}|(\partial_{i}\psi_{j})'|^{2}(u|(\partial_{i}\psi_{j})'|+\psi_{j}|(\partial_{i} u)'|)^{s-2}\dx\\
		&\leq\int_{\Omega_{2}^{j}\cap K_{j}}\vartheta_{j}^{(p-s)}|\nabla u|_{s,a}^{p-s}\sum_{i=1}^{n}u^{2}|(\partial_{i}\psi_{j})'|^{2}(u|(\partial_{i}\psi_{j})'|+\psi_{j}|(\partial_{i} u)'|)^{s-2}\dx\\
			&\leq\int_{\Omega_{2}^{j}\cap K_{j}}\left( \frac{\psi}{2^{j-1}}\right) ^{(p-s)}|\nabla u|_{s,a}^{p-s}\sum_{i=1}^{n}u^{2}|(\partial_{i}\psi_{j})'|^{2}(u|(\partial_{i}\psi_{j})'|+\psi_{j}|(\partial_{i} u)'|)^{s-2}\dx.	
	%\end{split}	
\end{align*}
When it comes to~$\sum_{i=1}^{n}u^{2}|(\partial_{i}\psi_{j})'|^{2}(u|(\partial_{i}\psi_{j})'|+\psi_{j}|(\partial_{i} u)'|)^{s-2}$, we use the same argument as the counterpart in the estimate for~$\Gamma_{2}$ to conclude that
\begin{equation*}
\Theta_{2}\leq\frac{C(p,s)}{2^{pj}}\int_{K_{j}}R_{2}(\psi\nabla u,u\nabla\psi)\dx.	
\end{equation*}
\end{itemize}
We proceed by proving that for~$p<s$,
\begin{align*}%\label{pcapestf}
	%\begin{split}
 &\int_{\Omega}|g||\psi u|^{p}\dx\\
  &\leq(2^{p}-1)\Vert g\Vert_{u}\sum_{j=-\infty}^{j=\infty}2^{pj}\capacity_{p,s,a,V}(\{\psi\geq 2^{j}\},u)\notag\\
		&\leq C(p,s)\Vert g\Vert_{u}\sum_{j=-\infty}^{j=\infty}2^{pj}\frac{1}{2^{pj}}\bigg(\int_{\Omega_{1}\cap K_{j}}R_{1}(\psi\nabla u,u\nabla\psi;m)\dx+\int_{\Omega'_{2}\cap K_{j}}R_{2}(\psi\nabla u,u\nabla\psi)\dx\bigg)\notag\\
		&\leq C(p,s)\Vert g\Vert_{u}\bigg(\sum_{j=-\infty}^{j=\infty}\int_{\Omega_{1}\cap K_{j}}R_{1}(\psi\nabla u,u\nabla\psi;m)\dx+\sum_{j=-\infty}^{j=\infty}\int_{\Omega'_{2}\cap A_{j}}R_{2}(\psi\nabla u,u\nabla\psi)\dx\bigg)\notag\\
		&\leq C(p,s)\Vert g\Vert_{u}\bigg(\int_{\Omega_{1}}R_{1}(\psi\nabla u,u\nabla\psi;m)\dx+\int_{\Omega_{2}}R_{2}(\psi\nabla u,u\nabla\psi)\dx\bigg)\notag\\
		&\leq C(p,s)\Vert g\Vert_{u}Q_{p,s,a,V}[u\psi],
	%\end{split}
\end{align*}
where the last step is due to Lemma \ref{simpsvn}.
%\end{itemize}
For~$s<p$, we calculate:
\begin{align*}%\label{pcapestf}
	%\begin{split}
	&\int_{\Omega}|g||\psi u|^{p}\dx\\
 &\leq(2^{p}-1)\Vert g\Vert_{u}\sum_{j=-\infty}^{j=\infty}2^{pj}\capacity_{p,s,a,V}(\{\psi\geq 2^{j}\},u)\notag\\
	&\leq C(p,s,n)\Vert g\Vert_{u}\sum_{j=-\infty}^{j=\infty}2^{pj}\frac{1}{2^{pj}}\bigg(\int_{K_{j}}R_{1}(\psi\nabla u,u\nabla\psi;M)\dx+\int_{K_{j}}R_{2}(\psi\nabla u,u\nabla\psi)\dx\bigg)\notag\\
	&\leq C(p,s,n)\Vert g\Vert_{u}\bigg(\sum_{j=-\infty}^{j=\infty}\int_{K_{j}}R_{1}(\psi\nabla u,u\nabla\psi;M)\dx+\sum_{j=-\infty}^{j=\infty}\int_{K_{j}}R_{2}(\psi\nabla u,u\nabla\psi)\dx\bigg)\notag\\
		&\leq C(p,s,n)\Vert g\Vert_{u}\bigg(\int_{\Omega}R_{1}(\psi\nabla u,u\nabla\psi;M)\dx+\int_{\Omega}R_{2}(\psi\nabla u,u\nabla\psi)\dx\bigg)\notag\\
	&\leq C(p,s,n)\Vert g\Vert_{u}Q_{p,s,a,V}[u\psi].
	%\end{split}
\end{align*}
%By the same reasoning as the counterpart of \cite[Theorem 1.2]{Das}, it follows easily that 
Then for all~$\phi\in W^{1,p}(\Omega)\cap C_{c}(\Omega)$, 
$$\int_{\Omega}|g||\phi|^{p}\dx\leq C\Vert g\Vert_{u}Q_{p,s,a,V}[\phi],$$ 
where~$C=\begin{cases}			C(p,s,n)&\mbox{if}~1<s<p<\infty,\\			C(p,s)&\mbox{if}~1<p\leq s<\infty.\\		\end{cases}$ 
% In other words,~$g$ is a Hardy-weight of~$Q_{p,s,a,V}$. 
\iffalse Now suppose that Assumption \ref{ngradb} holds.
\fi
Then for the same constant~$C$,
\begin{equation*}
 \Vert g\Vert_{\mathcal{H}}\leq C\Vert g\Vert_{u}.
\end{equation*}
Since~$\Vert\cdot\Vert_{u}$ is equivalent to~$\Vert \cdot\Vert_{\mathcal{H}}$, the completeness of~$(\mathcal{H}_{p,s,a,V}(\Omega),\Vert\cdot\Vert_{u})$ follows from Corollary \ref{pshsp}.
\iffalse
Then Lemma \ref{bhome} guarantees that~$B_{g,s,a,V}$ is positive definite and homogeneous with regard to~$g$. Take two arbitrary~$g_{1}, g_{2}\in H_{p,s,V}(\Omega)$ with~$B_{g_{i},s,a,V}\leq 1$ for~$i=1,2$, and~$\lambda\in (0,1)$. Then~$S_{g_{i},s,a,V}\geq 1$ for~$i=1,2$. By \cite[Theorem 1.9]{Simon}, it suffices to show that~$B_{\lambda g_{1}+(1-\lambda)g_{2},s,a,V}\leq 1$. Suppose the contrary. Then~$S_{\lambda g_{1}+(1-\lambda)g_{2},s,a,V}<1$. Thus there exists a function~$\phi\in W^{1,p}(\Omega)\cap C_{c}(\Omega)$ such that~$Q_{p,s,a,V}[\phi]<1$ and~$\int_{\Omega}|\lambda g_{1}+(1-\lambda)g_{2}||\phi|^{p}\dx=1$. Plainly, we get$$1\leq \lambda\int_{\Omega}| g_{1}||\phi|^{p}\dx+(1-\lambda)\int_{\Omega}| g_{2}||\phi|^{p}\dx.$$
Consequently, it is prohibited that~$\int_{\Omega}| g_{i}||\phi|^{p}\dx<1$ for both~$i$. If~$\int_{\Omega}| g_{i}||\phi|^{p}\dx=1$ for some~$i$, then~$S_{g_{i},s,a,V}<1$, which contradicts~$B_{ g_{i},s,a,V}\leq 1$. If~$\int_{\Omega}| g_{i}||\phi|^{p}\dx>1$ for some~$i$, let~$\psi\triangleq \phi/(\int_{\Omega}| g_{i}||\phi|^{p}\dx)^{1/p}$. Then~$\int_{\Omega}| g_{i}||\psi|^{p}\dx=1$. However, $$Q_{p,s,a,V}[\psi]=\frac{Q_{p,s,a,V}[\phi]}{\int_{\Omega}| g_{i}||\phi|^{p}\dx}<1.$$ Contradiction!
\fi
%The tail of our proof is the same as that of \cite[Theorem 1.2]{Das}.
%In the estimate above, replacing~$R_{1}(\psi\nabla u,u\nabla\psi;2)$ with~$R_{1}(\psi\nabla u,u\nabla\psi;s)$, we also get the identical functional inequality for~$s\geq 2$. 
\eproof
%\begin{comment}
\iffalse
The following theorem is based on Lemma \ref{simpv} and similar methods to that of Theorem \ref{psmazya}.
\btheorem\label{mazpse}
Suppose that $1<p<\infty$, the real-valued Lebesgue measurable functions~$a_{i}$, defined in~$\Omega$, are equivalent to~$1$ locally in~$\Omega$ for~$i=1,2\cdots,n$, and~$V\in M^{q}_{\loc}(p;\Omega)$. For almost all~$x\in\Omega$ and all~$\xi\in\mathbb{R}^{n}$, consider	$$H(x,\xi)= \bigg(\sum_{i=1}^{n}a_{i}(x)|\xi_{i}|^{p}\bigg)^{1/p}.$$
% and let~$\mathcal{A}(x,\xi)=\nabla_{\xi}F(x,\xi)=\nabla_{\xi}(\frac{1}{p}H_{p}(x,\xi)^{p})$ for almost all~$x\in\Omega$.
Fix a positive solution~$u\in W^{1,p}_{\loc}(\Omega)\cap C(\Omega)$ of the equation~$Q'[v]=0$. Then  
$g\in L^{1}_{\loc}(\Omega)$ is a Hardy-weight of~$Q$ in~$\Omega$ if and only if $\Vert g\Vert_{p,\mathcal{A},V}<\infty$. In this case, $\Vert g\Vert_{\mathcal{H}}$ is an equivalent norm, and  there exists a positive constant~$C(p)$ 
%which does not depend on~$g\in L^{1}_{\loc}(\Omega)$, 
such that$$\Vert g\Vert_{p,\mathcal{A},V}\leq \Vert g\Vert_{\mathcal{H}}\leq C(p)\Vert g\Vert_{p,\mathcal{A},V}.$$
\etheorem
\fi
In the same manner, we may easily obtain the following characterization by virtue of Lemmas \ref{GRF} and \ref{ABE}. Note that for all~$1<p,s<\infty$ and almost all~$x\in\Omega$,~$D_{|\cdot|^{p}_{s,a}+|\cdot|_{A}^{p}}=D_{|\cdot|^{p}_{s,a}}+D_{|\cdot|_{A}^{p}}$.
\btheorem\label{newmazya}
Suppose that $1<p,s<\infty$ and let~$H(x,\xi)= \sqrt[p]{|\xi|^{p}_{s,a}+|\xi|_{A}^{p}}$ for almost all~$x\in\Omega$ and all~$\xi\in\R^{n}$, where the measurable matrix function~$A$ satisfies:
\begin{enumerate}
\item For almost all~$x\in\Omega$,~$A(x)$ is symmetric;
\item for every domain~$\omega\Subset\Omega$, there exist positive constants~$\theta_{1,\omega}$ and~$\theta_{2,\omega}$ such that for almost all~$x\in\omega$ and all~$\xi\in\R^{n}$, $$\theta_{1,\omega}|\xi|\leq|\xi|_{A}=\sqrt{A(x)\xi\cdot\xi}\leq \theta_{2,\omega}|\xi|.$$
\end{enumerate} %satisfies the conditions in Remark \ref{gbexa}. 
%and let~$\mathcal{A}(x,\xi)=\nabla_{\xi}F(x,\xi)=\nabla_{\xi}(\frac{1}{p}H_{p}(x,\xi)^{p})$ for almost all~$x\in\Omega$.
Fix a positive solution~$u\in W^{1,p}_{\loc}(\Omega)\cap C(\Omega)$ of the equation~$Q'[v]=0$ in $\Gw$. Then for all
%$$\mathcal{H}_{p,s,V}(\Omega)=\{g\in L^{1}_{\loc}(\Omega)~|~\Vert g\Vert_{p,s,V}<\infty\}.$$
$g\in L^{1}_{\loc}(\Omega)$,~$g\in\mathcal{H}(\Omega)$ if and only if~$\Vert g\Vert_{u}<\infty$. Moreover,
\iffalse
there exists a nonnegative constant~$A$ such that for all compact subsets~$K$ of $\Omega$,
\begin{align*}%\label{hwchin}
\int_{K}|g||u|^{p}\dx\leq   A\capacity_{Q}(K,u).
\end{align*}
Suppose, in addition to Assumptions \ref{ass9}, that Assumption \ref{ngradb} holds.
\fi
there exists a positive constant
\begin{align*}
C=\begin{cases}			C(p,s,n)&\mbox{if}~p>s,\\			C(p,s)&\mbox{if}~p\leq s,\\
		\end{cases}
\end{align*}
%~$C=C(p,s,n)$($1<s<p<\infty$) or~$C=C(p,s)$($1<p\leq s<\infty$) %which does not depend on~$g\in L^{1}_{\loc}(\Omega)$, 
such that for all~$g\in \mathcal{H}(\Omega)$, $$\Vert g\Vert_{u}\leq \Vert g\Vert_{\mathcal{H}}\leq C\Vert g\Vert_{u}.$$ Furthermore,~$(\mathcal{H}(\Omega),\Vert\cdot\Vert_{u})$ is a Banach space.
\etheorem
\section{Three Attainments of the Hardy constant}\label{attainmts}
In this section, our principal purpose is to demonstrate three results concerning the attainment of the~$Q$-Hardy constant. The main ingredients are, respectively, closed images of bounded sequences (up to a subsequence) under an operator~$T_{g}$, a positive spectral gap, and a concentration compactness method. To this end, we first define a~$Q$-Sobolev space.
%In \cite{Das}, the authors proved two different results concerning the attainments of the optimal Hardy constant in the generalized Beppo Levi space for~$Q_{p,A,V}$, where~$A$ is a regular matrix function (see \cite[Theorems 5.1 and 6.1]{Das}). Here the attainment means that there exists a function in the generalized Beppo Levi space  such that the underlying Hardy-type inequality becomes an equality with the optimal Hardy constant \cite[page 17]{Das}. It turns out that the attainment occurs if there is a certain spectral gap. 
\subsection{The $Q$-Sobolev space}
In this subsection, we define the~$Q$-Sobolev space~$\widetilde{W}^{1,p}(\Omega)$ and an important subspace~$\widetilde{W}^{1,p}_{0}(\Omega)$.  We also show some basic properties of the two spaces. An important assumption, i.e., the \emph{common uniform convexity} of~$H$, is invoked. In particular, we show that the subspace is reflexive, which is pivotal in the whole section.

\textbf{Throughout Section \ref{attainmts}, unless otherwise stated, we suppose that Assumptions \ref{ass2} and \ref{ngradb} also hold and that the functional~$Q$ is subcritical in~$\Omega$.}
\iffalse
\begin{assumption}\label{subass}
%\emph{Suppose that there exists a positive~$W\in L^{1}_{\loc}(\Omega)$, locally bounded away from zero, such that~$Q_{p,\mathcal{A},V^{+}-W}[u]$ is nonnegative for all~$u\in W_{\mathcal{A},V^{+}}^{1,p}(\Omega)$.}
\emph{The functional~$Q$ is subcritical in~$\Omega$. In addition, Assumptions \ref{ass2} and \ref{ngradb} also hold.}
\end{assumption}
\fi
%For the following lemma, see \cite[Theorem 6.12]{HPR}.

We first recall that every subcritical functional has a positive continuous Hardy-weight.
\blemma[{\cite[Theorem 6.12 (3)]{HPR}}]\label{pcfw}
%If Assumption \ref{subass} also holds, then
There exists a positive~$\mathcal{V}\in C(\Omega)\cap\mathcal{H}(\Omega)$. In particular,~$\mathcal{V}$ is locally bounded away from zero.
\elemma
The following definition is a generalization of \cite[Definition 1.8]{Kinnunen}.
\bdefinition\label{PAVHSS}%\Hmm{I have no idea on the uniqueness of the space. If using your completion definition, I can not prove~$Q_{V-|g|}[\phi]\geq 0$ for all~$\phi$ in the Beppo Levi space.}
\emph{% The \emph{generalized Beppo Levi space~$\mathcal{D}_{\mathcal{A},V^{+}}^{1,p}(\Omega)$} is defined as the completion of~$W^{1,p}\cap C_{c}(\Omega)$ with respect to the norm
%Suppose that Assumption \ref{subass} also holds. 
Take a positive~$\mathcal{V}\in C(\Omega)\cap\mathcal{H}(\Omega)$. The \emph{$Q$-Sobolev space} is defined by
\begin{align*}
\widetilde{W}^{1,p}(\Omega) %&\triangleq W^{1,p,\mathcal{A},V,\mathcal{V},g}(\Omega)\\
\!\triangleq\!\left\{\!u\!\in \!L^{p}(\Omega,(|V|+\mathcal{V})\!\dx)\bigg|\mbox{ the weak gradient }\nabla u\mbox{ exists and }~\Vert u\Vert_{\widetilde{W}^{1,p}(\Omega)}\!<\!\infty\!\right\}\!,
\end{align*}
where
$$\Vert u\Vert_{\widetilde{W}^{1,p}(\Omega)}\triangleq Q_{p,\mathcal{A},|V|+\mathcal{V}}[u]^{1/p}\triangleq\left(\Vert|\nabla u|_{\mathcal{A}}\Vert^{p}_{L^{p}(\Omega)}+\Vert u\Vert^{p}_{L^{p}\left(\Omega,\left(|V|+\mathcal{V}\right)\dx\right)}\right)^{1/p}.$$}
\edefinition
\iffalse
\begin{remark}%\label{subass}
%\emph{Suppose that there exists a positive~$W\in L^{1}_{\loc}(\Omega)$, locally bounded away from zero, such that~$Q_{p,\mathcal{A},V^{+}-W}[u]$ is nonnegative for all~$u\in W_{\mathcal{A},V^{+}}^{1,p}(\Omega)$.}
\emph{If the potential~$V$ itself is positive in~$\Omega$ and locally bounded away from zero, then we do not have to invoke this positive continuous~$\mathcal{V}$. Rather, we may use~$V$ directly in this definition.}
\end{remark}
\fi
%\fi
Recall that for almost all~$x\in\Omega$,~$\mathbb{R}^{n}_{x}=(\mathbb{R}^{n},H(x,\cdot))$. See Definition \ref{modcon} for the modulus of convexity~$\delta_{\mathbb{R}^{n}_{x}}$.
\begin{assumption}\label{cuc}
\emph{{\bf (Common uniform convexity)} There exists a Lebesgue null set~$N\subseteq\Omega$ such that for all~$\varepsilon\in(0,2]$, $$\inf_{x\in\Omega\setminus N}\delta_{\mathbb{R}^{n}_{x}}(\varepsilon)>0.$$}
\end{assumption}
\bexample
\begin{enumerate}
\item[\emph{(1)}] \emph{If~$H$ does not depend on~$x\in\Omega$, then Assumption \ref{cuc} is satisfied.}
\item[\emph{(2)}] \emph{If~$H(x,\xi)=|\xi|_{A}$, where the measurable matrix function~$A$ satisfies the conditions (1) and (2) in Theorem \ref{newmazya}, then Assumption \ref{cuc} is satisfied because for almost all~$x\in\Omega$,~$\delta_{\mathbb{R}^{n}_{x}}(\varepsilon)=1-\sqrt{1-\varepsilon^{2}/{4}}$ (see, e.g., \cite[(1.1)]{Roach}).}
\end{enumerate}
\eexample
\textbf{From now on, throughout Section \ref{attainmts}, unless otherwise stated, we suppose that Assumption \ref{cuc} also holds.
%Assumptions \ref{subass} and \ref{cuc} also hold.
} 
\iffalse
\begin{assumption}\label{ulub}
\emph{Suppose that there exist two positive constants~$\kappa_{\Omega}$ and $\nu_{\Omega}$ such that for all~$\xi\in\R^{n}$ and almost all~$x\in\Omega$, $$\kappa_{\Omega}|\xi|\leq H(x,\xi)\leq \nu_{\Omega}|\xi|.$$
}
\end{assumption}
\fi
 %The following lemma is . 
\blemma\label{pavs}
%Suppose that Assumptions \ref{subass} and \ref{cuc} also hold. Then
The space~$\widetilde{W}^{1,p}(\Omega)$ is a reflexive Banach space. Moreover,~$\widetilde{W}^{1,p}(\Omega)\subseteq W^{1,p}_{\loc}(\Omega)$ and every bounded sequence in~$\widetilde{W}^{1,p}(\Omega)$ is bounded in~$W^{1,p}(\omega)$ for all domains~$\omega\Subset\Omega$.
\elemma
\bproof
The proof is analogous to those of \cite[Theorem 1.15]{Kinnunen}, \cite[Remarks 2.20 (3)]{Kinnunen}, and \cite[Remarks 4.1 (iii)]{Das}. % The homogeneity is clear. 
Since~$\mathcal{V}$ is positive and locally bounded away from zero, we get the positive definiteness and also by Assumption \ref{ass9}, we conclude that~$\widetilde{W}^{1,p}(\Omega)\subseteq W^{1,p}_{\loc}(\Omega)$ and the last claim of the lemma. %For the triangle inequality, see \cite[Lemma 5.23]{HKM} and \cite[Theorem 1.13]{Kinnunen}. 
By Assumption \ref{cuc} and \cite[Theorem 2.2]{Dintegral},
the direct integral~$\big(\int_{\Omega}^{\oplus}\R^{n}_{x}\dx\big)_{L^{p}(\Omega)}$ (see \cite{Dintegral}) is a uniformly convex Banach space. Furthermore, we also obtain the completeness of~$\widetilde{W}^{1,p}(\Omega)$ (see the proof of \cite[Theorem 1.15]{Kinnunen}). Then~$\widetilde{W}^{1,p}(\Omega)$ is a Banach space. %Assumptions \ref{ass9} and \ref{subass} also lead to the last claim.
%Simultaneously,
\iffalse
By Assumption \ref{ass9}, there exists a positive measurable function~$\rho(x):\Omega\rightarrow (0,\infty)$, equivalent to~$1$ locally in~$\Omega$, such that for all~$f\in\big(\int_{\Omega}^{\oplus}\R^{n}_{x}\dx\big)_{L^{p}(\Omega)}$, ~$\Vert|f|\Vert_{L^{p}(\Omega,\rho\dx)}\leq\Vert|f|_{\mathcal{A}}\Vert_{L^{p}(\Omega)}$. Therefore,~$\big(\int_{\Omega}^{\oplus}\R^{n}_{x}\dx\big)_{L^{p}(\Omega)}$ is a closed subspace of~$\big(\int_{\Omega}^{\oplus}\R^{n}\rho\dx\big)_{L^{p}(\Omega)}$ \
\fi
Recall that~$L^{p}(\Omega,(|V|+\mathcal{V})\dx)$ is uniformly convex (see \cite[Section 4.3]{Brezis}). Thus the product space $$\left(\left(\int_{\Omega}^{\oplus}\R^{n}_{x}\dx\right)_{L^{p}(\Omega)}\times L^{p}(\Omega,(|V|+\mathcal{V})\dx),\left(\Vert|\cdot|_{\mathcal{A}}\Vert^{p}_{L^{p}(\Omega)}+\Vert \cdot\Vert^{p}_{L^{p}(\Omega,(|V|+\mathcal{V})\dx)}\right)^{1/p}\right)$$ is uniformly convex by \cite[Theorem 3]{Day43}. It follows that~$\widetilde{W}^{1,p}(\Omega)$ is uniformly convex by the norm-preserving embedding and then reflexive by the Milman-Pettis theorem \cite[Theorem 3.31]{Brezis}. 
\eproof
\subsubsection{The space~$\widetilde{W}^{1,p}_{0}(\Omega)$}
Now we consider the subspace~$\widetilde{W}^{1,p}_{0}(\Omega)$ of~$\widetilde{W}^{1,p}(\Omega)$ which plays an important role in the present section. In \cite{Das}, the authors define a generalized Beppo Levi space as the completion of~$W^{1,p}(\Omega)\cap C_{c}(\Omega)$ with respect to another norm. See Remark \ref{rem1} for a discussion.
\bdefinition\label{def1}
\emph{%Suppose that Assumptions \ref{subass} and \ref{cuc} also hold.
Let %~$W_{0}^{1,p,\mathcal{A},V,\mathcal{V},g}(\Omega)$ (also denoted by 
$\widetilde{W}^{1,p}_{0}(\Omega)$ (also denoted by~$\widetilde{W}^{1,p}_{0,\mathcal{V}}(\Omega)$) be the closure of~$W^{1,p}(\Omega)\cap C_{c}(\Omega)$ in~$\widetilde{W}^{1,p}(\Omega)$. }
\bremark
\emph{By an approximation argument (see the proof of \cite[Assertion 7.3]{Hou}), it can be easily proved that~$\widetilde{W}^{1,p}_{0}(\Omega)$ is the same as the closure of~$\core$ in~$\widetilde{W}^{1,p}(\Omega)$.}
\eremark
\iffalse
\emph{% The \emph{generalized Beppo Levi space~$\mathcal{D}_{\mathcal{A},V^{+}}^{1,p}(\Omega)$} is defined as the completion of~$W^{1,p}\cap C_{c}(\Omega)$ with respect to the norm
%Suppose that Assumption \ref{subass} also holds. 
%Take a positive~$\mathcal{V}\in C(\Omega)\cap\mathcal{H}(\Omega)$ provided by Lemma \ref{pcfw}.
The \emph{$Q$-Sobolev space} is defined by
\begin{align*}
&\widetilde{W}^{1,p}(\Omega)\\%&\triangleq W^{1,p,\mathcal{A},V,\mathcal{V},g}(\Omega)\\
&\triangleq\left\{u\in L^{1}_{\loc}(\Omega)~\bigg|~\mbox{the weak gradient}~\nabla u~\mbox{exists and}~Q_{|V|}[u]<\infty\right\},\end{align*}
where$$\Vert u\Vert_{\widetilde{W}^{1,p}(\Omega)}\triangleq Q_{p,\mathcal{A},|V|+\mathcal{V}+|g|}[u]^{1/p}\triangleq\left(\Vert|\nabla u|_{\mathcal{A}}\Vert^{p}_{L^{p}(\Omega)}+\Vert u\Vert^{p}_{L^{p}\left(\Omega,|V|+\mathcal{V}+|g|\dx\right)}\right)^{1/p}.$$ 
    `}
    \fi
\edefinition
\blemma\label{pav0}
%Suppose that Assumptions \ref{subass} and \ref{cuc} also hold. Then
The space~$\widetilde{W}^{1,p}_{0}(\Omega)$ is uniformly convex and reflexive. Moreover, for every bounded sequence~$\{\phi_{k}\}_{k\in\mathbb{N}}$ in~$\widetilde{W}^{1,p}_{0}(\Omega)$, there exists~$\phi\in \widetilde{W}^{1,p}_{0}(\Omega)$ such that up to a subsequence,~$\{\phi_{k}\}_{k\in\mathbb{N}}$ converges to~$\phi$ weakly in~$\widetilde{W}^{1,p}_{0}(\Omega)$, a.e. in~$\Omega$, and  strongly in~$L^{p}(\omega)$ for all smooth domains~$\omega\Subset\Omega$. 
\elemma
\bproof Since~$\widetilde{W}^{1,p}_{0}(\Omega)$ is a closed subspace of~$\widetilde{W}^{1,p}(\Omega)$, by Lemma \ref{pavs},~$\widetilde{W}^{1,p}_{0}(\Omega)$ is uniformly convex and reflexive. The rest of the proof is motivated by \cite[Remarks 4.1 (iii)]{Das}. For every bounded sequence~$\{\phi_{k}\}_{k\in\mathbb{N}}$ in~$\widetilde{W}^{1,p}_{0}(\Omega)$, there exists~$\phi\in \widetilde{W}^{1,p}_{0}(\Omega)$  such that up to a subsequence,~$\{\phi_{k}\}_{k\in\mathbb{N}}$ converges weakly to~$\phi$ in~$\widetilde{W}^{1,p}_{0}(\Omega)$. Take an arbitrary smooth domain~$\omega\Subset\Omega$. Then~$\{\phi_{k}\}_{k\in\mathbb{N}}$ is bounded in~$W^{1,p}(\omega)$ by Lemma \ref{pavs}. Therefore,~$\{\phi_{k}\}_{k\in\mathbb{N}}$ converges weakly to some~$\phi'$ in~$W^{1,p}(\omega)$. By the Rellich-Kondrachov theorem, it follows that up to a subsequence,~$\{\phi_{k}\}_{k\in\mathbb{N}}$ converges strongly to~$\phi'$ in~$L^{p}(\omega)$ and a.e. in~$\omega$. %For every~$v\in L^{p'}(\omega)$, we denote its zero extension to~$\Omega$ still by~$v$.
For every~$v\in L^{p'}(\omega)$,~$L(u)\triangleq\int_{\omega}uv\dx$ is a bounded linear operator on~$\widetilde{W}^{1,p}_{0}(\Omega)$. Hence, $\lim_{k\rightarrow\infty}\int_{\omega}\phi_{k}v\dx=\int_{\omega}\phi v\dx$, which implies that~$\{\phi_{k}\}_{k\in\mathbb{N}}$ converges weakly to~$\phi$ in~$L^{p}(\omega)$. We thus get~$\phi'=\phi$. By virtue of a smooth exhaustion of~$\Omega$ and a diagonalization argument, we may ensure a common subsequence for all domains~$\omega\Subset\Omega$.
\eproof
%Note that we always have $L^{\infty}_{c}(\Omega)\subseteq\mathcal{H}(\Omega)$.
The following lemma, motivated by \cite[Remark 4.2]{Das}, 
is mainly based on Lemma \ref{pav0} and Fatou's lemma. The proof is easy and similar and hence omitted.%\Hmm{See [V2, Lemmas 4.9 and 4.10] for the proof.}
\iffalse
\blemma\label{g-g}
%Suppose that Assumptions \ref{subass} and \ref{cuc} also hold. 
For all~$g\in\mathcal{H}(\Omega)$ and~$g'\in L^{\infty}_{c}(\Omega)$,~$\widetilde{W}^{1,p}_{0,g}(\Omega)\subseteq \widetilde{W}^{1,p}_{0,g-g'}(\Omega)$.
\elemma
\fi
\iffalse
\bproof
For every~$\phi\in \widetilde{W}^{1,p}_{0,g}(\Omega)$, there exists a sequence~$\{\phi_{k}\}_{k\in\mathbb{N}}\subseteq W^{1,p}(\Omega)\cap C_{c}(\Omega)$ converging to~$\phi$ in~$\widetilde{W}^{1,p}_{0,g}(\Omega)$. Since~$\mathcal{V}$ is positive and locally bounded away from zero, for all domains~$\omega\Subset\Omega$,~$\{\phi_{k}\}_{k\in\mathbb{N}}$ converges to~$\phi$ in~$L^{p}(\omega)$. Assuming that~$g'$ is supported in a domain~$\omega_{0}\Subset\Omega$, we may evaluate:
\begin{align*}
\int_{\Omega}|g-g'||\phi_{k}-\phi|^{p}\dx&\leq \int_{\Omega}|g||\phi_{k}-\phi|^{p}\dx+\int_{\omega_{0}}|g'||\phi_{k}-\phi|^{p}\dx\\
&\leq \int_{\Omega}|g||\phi_{k}-\phi|^{p}\dx+C\int_{\omega_{0}}|\phi_{k}-\phi|^{p}\dx\\ 
&\rightarrow 0(k\rightarrow \infty).
\end{align*}
Then $\phi\in \widetilde{W}^{1,p}_{0,g-g'}(\Omega)$.
\eproof
\fi
\blemma\label{varhc}
%Suppose that Assumptions \ref{subass} and \ref{cuc} also hold.
For every~$g\in \mathcal{H}(\Omega)$, %if there exists a positive constant~$C$ such that~$Q_{p,\mathcal{A},V-C|g|}[\phi]\geq 0$ for all~$\phi\in \widetilde{W}^{1,p}_{0}(\Omega)$, then
$Q_{p,\mathcal{A},V-S_{g}|g|}[\phi]\geq 0$ for all~$\phi\in \widetilde{W}^{1,p}_{0}(\Omega)$ and $$S_{g}=\inf\bigg\{Q[\phi]~\bigg|~\phi\in \widetilde{W}^{1,p}_{0}(\Omega) \mbox{~and} \int_{\Omega}|g||\phi|^{p}\dx=1\bigg\}.$$
\elemma
By virtue of Lemma \ref{varhc}, we may easily prove the following direct corollary, which shows that the space~$\widetilde{W}^{1,p}_{0,\mathcal{V}}(\Omega)$ does not depend on~$\mathcal{V}$.
\bcorollary\label{vdep}
\emph{For all positive~$\mathcal{V},\mathcal{V}'\in C(\Omega)\cap\mathcal{H}(\Omega)$, it holds that~$\widetilde{W}^{1,p}_{0,\mathcal{V}}(\Omega)=\widetilde{W}^{1,p}_{0,\mathcal{V}'}(\Omega)$.}
\ecorollary
\bremark\label{rem1}
\emph{%In Definition \ref{PAVHSS}, if we replace~$|V|$ with~$V^{+}$ and define~\emph{$\widehat{W}^{1,p}_{0}(\Omega)$} as the closure of~$W^{1,p}(\Omega)\cap C_{c}(\Omega)$ in the new~$Q$-Sobolev space \Hmm{with $\mathcal{V}=0$? If yes you need say it, if no, why not?}, then~$\widehat{W}^{1,p}_{0}(\Omega)=\widetilde{W}^{1,p}_{0}(\Omega)$ and Lemmas \ref{pavs}, \ref{pav0}, and \ref{varhc}, and Corollary \ref{vdep} hold verbatim. Moreover, by Lemma \ref{varhc},~$Q_{p,\mathcal{A},V^{+}}^{1/p}$ is a norm on~$\widetilde{W}^{1,p}_{0}(\Omega)$ and equivalent to~$Q_{p,\mathcal{A},|V|+\mathcal{V}}^{1/p}$. Then with respect to the norm~$Q_{p,\mathcal{A},V^{+}}^{1/p}$,~$\widetilde{W}^{1,p}_{0}(\Omega)$ is complete and~$W^{1,p}(\Omega)\cap C_{c}(\Omega)$ is a dense subspace. In other words,
	Through Lemma \ref{varhc}, it can be checked that the space~$\widetilde{W}^{1,p}_{0}(\Omega)$ is actually the completion of~$W^{1,p}(\Omega)\cap C_{c}(\Omega)$ with respect to the norm~$Q_{p,\mathcal{A},V^{+}}^{1/p}$. Therefore, when~$H(x,\xi)=|\xi|_{A}$, with a measurable matrix function~$A$ satisfying the conditions (1) and (2) in Theorem \ref{newmazya}, the space~$\widetilde{W}^{1,p}_{0}(\Omega)$ is equal to the \emph{generalized Beppo Levi space~$\mathcal{D}^{1,p}_{A,V^{+}}(\Omega)$} in \cite{Das} up to an isometry.}
\eremark
\bdefinition
\emph{For every~$g\in\mathcal{H}(\Omega)\setminus\{0\}$, the Hardy constant~$S_{g}$ is \emph{attained at some~$\phi\in \widetilde{W}^{1,p}_{0}(\Omega)\setminus\{0\}$} if~$\int_{\Omega}|g||\phi|^{p}\dx>0$
 and $$S_{g}=\frac{Q[\phi]}{\int_{\Omega}|g||\phi|^{p}\dx}\, .$$}
\edefinition
The following lemma can be easily proved by virtue of the standard variational technique (see the proof of \cite[Lemma 5.13]{HKM}) and the method in \cite[Theorem 4.19]{HPR}.
\blemma\label{lemuni}
Suppose that~$g\in \mathcal{H}(\Omega)\cap M^{q}_{\loc}(p;\Omega)\setminus\{0\}$ and that~$S_{g}$ is attained at some~$\phi\in \widetilde{W}^{1,p}_{0}(\Omega)\setminus\{0\}$. %and~$\int_{\Omega}|g||\phi|^{p}\dx=1$
Then~$\phi$ is a solution of~$Q'_{p,\mathcal{A},V-S_{g}|g|}[u]=0$ and either~$\phi>0$ in~$\Omega$ or~$\phi<0$ in~$\Omega$. For every two such functions~$\phi$ and~$\phi'$, there exists~$C\in \R\setminus\{0\}$ such that~$\phi=C\phi'$.
\elemma
\subsection{Closed images of bounded sequences}
In this subsection, by virtue of closed images of bounded sequences (up to a subsequence) in~$\widetilde{W}^{1,p}_{0}(\Omega)$ under an operator~$T_{g}$, we give the first attainment of the Hardy constant. We also provide some examples satisfying Condition (H0) (see Definition \ref{condih}).
%\Hmm{Now we do not need the Maz'ya-type characterization.}%In some cases, we invoke the Maz'ya-type characterization.

We first define the operator~$T_{g}$, Condition (H0), and a space~$M^{q}_{c}(p;\Omega)$.
\bdefinition
\emph{%Suppose that Assumptions \ref{subass} and \ref{cuc} also hold.
For every~$g\in \mathcal{H}(\Omega)$, % such that~$\int_{\Omega}|g||\phi|^{p}\dx<\infty$ for all~$\phi\in W^{1,p;0}_{\mathcal{A},V^{+}}(\Omega)$,
we define~$T_{g}(\phi)\triangleq\int_{\Omega}|g||\phi|^{p}\dx$ for all~$\phi\in \widetilde{W}^{1,p}_{0}(\Omega)$.}
\edefinition
\bdefinition\label{condih}
\emph{The operator~$T_{g}$
satisfies \emph{Condition (H0)} if for every bounded sequence~$\{\phi_{k}\}_{k\in\mathbb{N}}$ in~$\widetilde{W}^{1,p}_{0}(\Omega)$, there exists~$\phi\in \widetilde{W}^{1,p}_{0}(\Omega)$ such that up to a subsequence, %$\{\phi_{k}\}_{k\in\mathbb{N}}$ converges a.e. in~$\Omega$ and weakly in~$\widetilde{W}^{1,p}_{0}(\Omega)$ simultaneously to~$\phi$, and
$\{\phi_{k}\}_{k\in\mathbb{N}}$ converges to~$\phi$ weakly in~$\widetilde{W}^{1,p}_{0}(\Omega)$ and~$\lim_{k\rightarrow\infty}T_{g}(\phi_{k})=T_{g}(\phi)$.}
\edefinition
\iffalse
For generality, we regard the core of the Maz'ya-type characterization (Theorem \ref{psmazya}) as an assumption for~$Q$.%and Corollary \ref{pshsp})
\bassumption\label{mazass}\Hmm{I need this assumption as explained at the end of Section \ref{secmazya}. The strange assumption is for generality.}
\emph{Fix a positive solution~$u\in W^{1,p}_{\loc}(\Omega)\cap C(\Omega)$ of the equation~$Q'[v]=0$ in $\Gw$. For all
%$$\mathcal{H}_{p,s,V}(\Omega)=\{g\in L^{1}_{\loc}(\Omega)~|~\Vert g\Vert_{p,s,V}<\infty\}.$$
$g\in L^{1}_{\loc}(\Omega)$,~if~$\Vert g\Vert_{u}<\infty$, then~$g\in\mathcal{H}(\Omega)$.
There exists a positive constant~$C$, independent of~$g$ or~$u$, %which does not depend on~$g\in L^{1}_{\loc}(\Omega)$, 
such that for all~$g\in \mathcal{H}(\Omega)$, $$\Vert g\Vert_{\mathcal{H}}\leq C\Vert g\Vert_{u}.$$
}
\eassumption
\bremark
 \emph{It can be plainly seen from the proof that the other conclusions in Theorem \ref{psmazya} hold even without Assumptions \ref{ass2} or \ref{ngradb}. Example \ref{gbexa} and Theorem \ref{newmazya} provides a large class of examples realizing Assumptions \ref{ass9}, \ref{ass2}, \ref{ngradb}, and \ref{mazass} simultaneously.}
\eremark
\fi
\iffalse
\bremark
\emph{Under this extra assumption,~$(\mathcal{H}_{p,\mathcal{A},V}(\Omega),\Vert \cdot\Vert_{p,\mathcal{A},V})$ is a Banach space. See Corollary \ref{pshsp}.}
\eremark
\fi
\bdefinition
\emph{A function~$f\in M^{q}_{c}(p;\Omega)$ if for some domain~$\omega\Subset\Omega$,~$f\in M^{q}(p;\omega)$ and~$f|_{\Omega\setminus\omega}=0$.}
\edefinition
\bremark
\emph{We have $L^{\infty}_{c}(\Omega)\subseteq\mathcal{H}(\Omega)$. If the equation~$Q'[v]=0$ has a positive supersolution whose gradient is locally bounded in~$\Omega$ (cf. Theorem \ref{gethm} and Example \ref{gbexa}), then by \cite[Theorem 6.23 (2)]{HPR}, we may find a positive function~$W^{*}\in C(\Omega)$ such that for all~$\phi\in\core$,~$$Q[\phi]\geq\int_{\Omega}W^{*}(|\nabla\phi|_{\mathcal{A}}^{p}+|\phi|^{p})\dx.$$ Then by the Morrey-Adams theorem \cite[Theorem 2.16 (2)]{HPR}, we deduce that~$M^{q}_{c}(p;\Omega)\subseteq\mathcal{H}(\Omega)$.}
\eremark 
\bdefinition
\emph{%Fix a positive solution~$u\in W^{1,p}_{\loc}(\Omega)\cap C(\Omega)$ of the equation~$Q'[v]=0$ in $\Gw$.
Let~$\mathcal{H}^{0}(\Omega)$ be the closure of~$L^{\infty}_{c}(\Omega)$ in~$(\mathcal{H}(\Omega),\Vert\cdot\Vert_{\mathcal{H}})$. In addition, if the equation~$Q'[v]=0$ has a positive supersolution whose gradient is locally bounded in~$\Omega$, let~$\widetilde{\mathcal{H}}^{0}(\Omega)$ be the closure of~$M^{q}_{c}(p;\Omega)$ in~$(\mathcal{H}(\Omega),\Vert\cdot\Vert_{\mathcal{H}})$.}
\edefinition
\bremark
\emph{Suppose that the equation~$Q'[v]=0$ has a positive supersolution whose gradient is locally bounded in~$\Omega$. If~$p>n$, then~$\mathcal{H}^{0}(\Omega)=\widetilde{\mathcal{H}}^{0}(\Omega)$. To this end, it suffices to show that~$M^{q}_{c}(p;\Omega)\subseteq\mathcal{H}^{0}(\Omega)$. Note that in this case,~$M^{q}_{c}(p;\Omega)=L^{1}_{c}(\Omega)$. For any~$g\in L^{1}_{c}(\Omega)$, suppose that~$g\in L^{1}(\omega)$ for some Lipschitz domain~$\omega\Subset\Omega$ and~$\supp g\subseteq\omega$. Then by \cite[Corollary 4.23]{Brezis}, there exists a sequence~$\{g_{k}\}_{k\in\mathbb{N}}\subseteq C^{\infty}_{c}(\omega)\subseteq \mathcal{H}^{0}(\Omega)$ converging in~$L^{1}(\omega)$ to~$g$. It remains to prove that~$\lim_{k\rightarrow\infty}\Vert g_{k}-g\Vert_{\mathcal{H}}=0$. Take a domain~$\omega'\Subset\Omega$ such that~$\omega\Subset\omega'$. Checking the proof of \cite[Theorem 2.4 (ii)]{PPAPDE}, we see that for all~$\varepsilon>0$,~$k\in\mathbb{N}$, and~$\phi\in W^{1,p}(\omega)$,
\begin{align*}
\int_{\omega}|g_{k}-g||\phi|^{p}\dx&\leq C(n,p,\omega,\omega')\varepsilon\Vert\phi\Vert_{W^{1,p}(\omega)}^{p}+\frac{C(n,p,\omega,\omega')}{\varepsilon^{n/(p-n)}}\Vert g_{k}-g\Vert_{L^{1}(\omega)}^{p/(p-n)}\Vert \phi\Vert_{L^{p}(\omega)}^{p}.
%&= C(n,p,\omega,\omega')\varepsilon\Vert\phi\Vert_{W^{1,p}(\omega)}^{p}+\frac{C(n,p,\omega,\omega')}{\varepsilon^{n/(p-n)}}\Vert g_{k}-g\Vert_{L^{1}(\omega)}^{p/(p-n)}\Vert \phi\Vert_{L^{p}(\omega)}^{p}\\
\end{align*}
For each~$\varepsilon>0$, there exists~$k_{0}\in\mathbb{N}$ such that for all~$k\geq k_{0}$,~$\Vert g_{k}-g\Vert_{L^{1}(\omega)}^{p/(p-n)}<\varepsilon^{n/(p-n)+1}$. 
Hence for all such $k$, we have 
$$\int_{\omega}|g_{k}-g||\phi|^{p}\dx\leq C(n,p,\omega,\omega')\varepsilon\Vert\phi\Vert_{W^{1,p}(\omega)}^{p}.$$
Still by \cite[Theorem 6.23 (2)]{HPR}, there exists a positive function~$W^{*}\in C(\Omega)$ such that for all~$\phi\in C^{\infty}_{c}(\Omega)$ and hence all~$\phi\in W^{1,p}(\Omega)\cap C_{c}(\Omega)$ by approximation,~$$Q[\phi]\geq\int_{\Omega}W^{*}(|\nabla\phi|_{\mathcal{A}}^{p}+|\phi|^{p})\dx.$$ Then for all~$\phi\in W^{1,p}(\Omega)\cap C_{c}(\Omega)$,~$$Q[\phi]\geq\int_{\omega}W^{*}(|\nabla\phi|_{\mathcal{A}}^{p}+|\phi|^{p})\dx\geq C_{0}\int_{\omega}(|\nabla\phi|^{p}+|\phi|^{p})\dx,$$
where the constant~$C_{0}$ does not depend on~$\phi$. 
Therefore, for each~$\varepsilon>0$, all~$k\geq k_{0}$, and all~$\phi\in W^{1,p}(\Omega)\cap C_{c}(\Omega)$ such that~$\int_{\Omega}|g_{k}-g||\phi|^{p}\dx>0$, we may estimate (cf. \cite[p. 1337]{PPAPDE}):
\begin{align*}
\frac{\int_{\Omega}|g_{k}-g||\phi|^{p}\dx}{Q[\phi]}=\frac{\int_{\omega}|g_{k}-g||\phi|^{p}\dx}{Q[\phi]}
\leq \frac{C(n,p,\omega,\omega')\varepsilon\Vert\phi\Vert_{W^{1,p}(\omega)}^{p}}{Q[\phi]}\leq \frac{C_{1}\varepsilon Q[\phi]}{Q[\phi]}=C_{1}\varepsilon,
\end{align*}
where the constant~$C_{1}$ does not depend on~$k$ or~$\phi$.
Recall that for all~$k\in\mathbb{N}$,
\begin{align*}
\Vert g_{k}-g\Vert_{\mathcal{H}}&=\sup\bigg\{\frac{\int_{\Omega}|g_{k}-g||\phi|^{p}\dx}{Q[\phi]}~\bigg|~\phi\in W^{1,p}(\Omega)\cap C_{c}(\Omega) \mbox{~and} \int_{\Omega}|g_{k}-g||\phi|^{p}\dx>0\bigg\}.
\end{align*}
Therefore,~$\lim_{k\rightarrow\infty}\Vert g_{k}-g\Vert_{\mathcal{H}}=0$.
}
\eremark
Now we come to the central lemma of this subsection (see \cite[Theorem 4.3]{Das}). % and hence omitted. \Hmm{For the proof, see [V2, Theorem 4.16].}
\btheorem\label{conh}
\begin{enumerate}
    \item %Suppose that Assumptions \ref{subass} and \ref{cuc} also hold.
For every $g\in \mathcal{H}^{0}(\Omega)$,~$T_{g}$ satisfies Condition (H0). 
    %Further, assume that the equation~$Q'[v]=0$ has a positive supersolution whose gradient is locally bounded in~$\Omega$, then 
    \item Assume that the equation~$Q'[v]=0$ has a positive supersolution whose gradient is locally bounded in~$\Omega$. Then for every~$g\in\widetilde{\mathcal{H}}^{0}(\Omega)$,~$T_{g}$ satisfies Condition (H0). %and every bounded sequence~$\{\phi_{k}\}_{k\in\mathbb{N}}$ in~$\widetilde{W}^{1,p}_{0}(\Omega)$, there exists~$\phi\in \widetilde{W}^{1,p}_{0}(\Omega)$ such that up to a subsequence, $\{\phi_{k}\}_{k\in\mathbb{N}}$ converges a.e. in~$\Omega$ and weakly in~$\widetilde{W}^{1,p}_{0}(\Omega)$ simultaneously to~$\phi$, and~$\lim_{k\rightarrow\infty}T_{g}(\phi_{k})=T_{g}(\phi)$.
   %\item [(3)]%Fix a positive solution~$u\in W^{1,p}_{\loc}(\Omega)\cap C(\Omega)$ of the equation~$Q'[v]=0$ in $\Gw$. 
   % For  every~$g\in\widetilde{\mathcal{H}}^{0}(\Omega)$,~$T_{g}$ satisfies Condition (H0). 
   %\item[(4)] If the equation~$Q'[v]=0$ has a positive supersolution whose gradient is locally bounded in~$\Omega$, then for every~$g\in\widetilde{\mathcal{H}}^{0}(\Omega)$,~$T_{g}$ satisfies the condition (H0). 
   %\item[(4)]
   % such that~$Q_{p,\mathcal{A},V-C|g|}[\phi]\geq 0$ for some positive constant~$C$ and all~$\phi\in \widetilde{W}^{1,p}_{0}(\Omega)$,
  % if there exists a sequence~$\{g_{l}\}_{l\in\mathbb{N}}\subseteq L^{\infty}_{c}(\Omega)$ supported in a common domain~$\omega\Subset\Omega$ such that~$\lim_{l\rightarrow\infty}\Vert g-g_{l}\Vert_{p,\mathcal{A},V}=0$, then for
 % and every bounded sequence~$\{\phi_{k}\}_{k\in\mathbb{N}}$ in~$\widetilde{W}^{1,p}_{0}(\Omega)$, there exists~$\phi\in \widetilde{W}^{1,p}_{0}(\Omega)$ such that up to a subsequence, $\{\phi_{k}\}_{k\in\mathbb{N}}$ converges a.e. in~$\Omega$ and weakly in~$\widetilde{W}^{1,p}_{0}(\Omega)$ simultaneously to~$\phi$, and~$\lim_{k\rightarrow\infty}T_{g}(\phi_{k})=T_{g}(\phi)$.%, where~$T_{g}(\phi)\triangleq \int_{\Omega}|g||\phi|^{p}\dx$ for all~$\phi\in W^{1,p;0}_{\mathcal{A},V^{+}}(\Omega)$.
\end{enumerate}
\etheorem
\bproof
%The proof is similar to the counterpart of \cite[Theorem 4.3]{Das} and hence omitted.
 Let $\{\phi_{k}\}_{k\in\mathbb{N}}$ be a bounded sequence in~$\widetilde{W}^{1,p}_{0}(\Omega)$. Then by Lemma \ref{pav0}, there exists~$\phi\in \widetilde{W}^{1,p}_{0}(\Omega)$ such that up to a subsequence,~$\{\phi_{k}\}_{k\in\mathbb{N}}$ converges to~$\phi$ weakly in~$\widetilde{W}^{1,p}_{0}(\Omega)$, a.e. in~$\Omega$, and strongly in~$L^{p}(\omega)$ for all smooth domains~$\omega\Subset\Omega$.

(1): The proof is similar to that of \cite[Theorem 4.3]{Das}. First consider an arbitrary~$g\in L^{\infty}_{c}(\Omega)$. Suppose that~$g$ is supported in a smooth domain~$\omega\Subset\Omega$. Then$$\lim_{k\rightarrow \infty}\int_{\omega}|\phi_{k}|^{p}\dx=\int_{\omega}|\phi|^{p}\dx.$$ Note that a.e. in~$\Omega$,~$|g||\phi_{k}|^{p}\leq C|\phi_{k}|^{p}$ for some constant~$C$ as~$g$ is bounded in~$\Omega$. Plainly,~$\lim_{k\rightarrow \infty}|g||\phi_{k}|^{p}=|g||\phi|^{p}$ a.e. in~$\Omega$. By \cite[Section 4.4, Theorem 19]{reala}, we get~$\lim_{k\rightarrow \infty}\int_{\omega}|g||\phi_{k}|^{p}=\int_{\omega}|g||\phi|^{p}$.
%\iffalse

Now consider~$g\in \mathcal{H}^{0}(\Omega)$. By definition, there exists a sequence~$\{g_{l}\}_{l\in\mathbb{N}}\subseteq L^{\infty}_{c}(\Omega)$ converging to~$g$ in~$(\mathcal{H}(\Omega),\Vert\cdot\Vert_{\mathcal{H}})$. %Consider an arbitrary bounded sequence~$\{\phi_{k}\}_{k\in\mathbb{N}}$ in~$\widetilde{W}^{1,p}_{0}(\Omega)$. % We may assume that~$\omega$ is smooth.
  Then for all~$l\in\mathbb{N}$, $\lim_{k\rightarrow\infty}\int_{\Omega}|g_{l}||\phi_{k}|^{p}\dx=\int_{\Omega}|g_{l}||\phi|^{p}\dx$. Since~$\{\phi_{k}\}_{k\in\mathbb{N}}$ is bounded in~$\widetilde{W}^{1,p}_{0}(\Omega)$, we can certainly assume that for some constant~$M>0$ and all~$k\in\mathbb{N}$,~$\Vert\phi_{k}\Vert_{\widetilde{W}^{1,p}(\Omega)}\leq M$ and~$\Vert\phi\Vert_{\widetilde{W}^{1,p}(\Omega)}\leq M$. For each~$\varepsilon>0$, there exists~$l_{0}\in\mathbb{N}$ such that~$\Vert g-g_{l_{0}}\Vert_{\mathcal{H}}<\varepsilon$, and then~$N\in \mathbb{N}$ such that for all~$k\geq N$,$$\left|\int_{\Omega}|g_{l_{0}}||\phi_{k}|^{p}\dx-\int_{\Omega}|g_{l_{0}}||\phi|^{p}\dx\right|<\varepsilon.$$
  We continue by calculating:
\begin{align*}
&\int_{\Omega}|g-g_{l_{0}}|\left||\phi_{k}|^{p}-|\phi|^{p}\right|\dx\leq p\int_{\Omega}|g-g_{l_{0}}||\phi_{k}-\phi|\left(|\phi_{k}|^{p-1}+|\phi|^{p-1}\right)\dx\\
&\leq C\left(\int_{\Omega}|g-g_{l_{0}}||\phi_{k}-\phi|^{p}\dx\right)^{1/p}\left(\int_{\Omega}|g-g_{l_{0}}||\phi_{k}|^{p}+|g-g_{l_{0}}||\phi|^{p}\dx\right)^{1/p'}\\
&\leq C\left(\frac{1}{S_{g-g_{l_{0}}}}Q[\phi_{k}-\phi]\right)^{1/p}\left(\frac{1}{S_{g-g_{l_{0}}}}Q[\phi_{k}]+\frac{1}{S_{g-g_{l_{0}}}}Q[\phi]\right)^{1/p'}\\
&\leq C\Vert g-g_{l_{0}}\Vert_{\mathcal{H}}Q[\phi_{k}-\phi]^{1/p}M^{p/p'}
\leq C\Vert g-g_{l_{0}}\Vert_{\mathcal{H}}\Vert\phi_{k}-\phi\Vert_{\widetilde{W}^{1,p}(\Omega)}M^{p/p'}\\
&\leq C\Vert g-g_{l_{0}}\Vert_{\mathcal{H}}M^{1+p/p'} < CM^{p}\varepsilon,
%&\leq C\Vert g-g_{l_{0}}\Vert_{u}Q[\phi_{k}-\phi]^{1/p}M^{p/p'}\\
\end{align*}
where the third inequality is due to Lemma \ref{varhc}.
Furthermore, for all~$k\geq N$, we have:
\begin{align*}
\left|T_{g}(\phi_{k})-T_{g}(\phi)\right|&=\left|\int_{\Omega}|g||\phi_{k}|^{p}\dx-\int_{\Omega}|g||\phi|^{p}\dx\right|\\
&\leq\left|\int_{\Omega}(|g|-|g_{l_{0}}|+|g_{l_{0}}|)(|\phi_{k}|^{p}-|\phi|^{p})\dx\right|\\
&\leq\int_{\Omega}|g-g_{l_{0}}|||\phi_{k}|^{p}-|\phi|^{p}|\dx+\left|\int_{\Omega}|g_{l_{0}}|\left(|\phi_{k}|^{p}-|\phi|^{p}\right)\dx\right|\\
&<CM^{p}\varepsilon+\varepsilon.
\end{align*}
%\fi

(2): First suppose that~$g\in M^{q}_{c}(p;\Omega)$. By definition, there exists a domain~$\omega\Subset\Omega$ such that~$g\in M^{q}(p;\omega)$ and~$g|_{\Omega\setminus\omega}=0$. Then~$g\in M^{q}(p;\omega')$ for all smooth domains~$\omega'$ such that~$\omega\Subset\omega'\Subset\Omega$. In particular, we pick two smooth domains~$\omega'$ and~$\omega''$ such that~$\omega\Subset\omega'\Subset\omega''\Subset\Omega$.  By the Morrey-Adams theorem \cite[Theorem 2.16 (2)]{HPR}, for every sufficiently small~$\varepsilon>0$, there exists~$C=C(n,p,q,\varepsilon,\omega',\omega'',\Vert g\Vert_{M^{q}(p;\omega'')})>0$ and~$N\in\mathbb{N}$ such that for all~$k\geq N$,
\begin{align*}	\int_{\omega}|g||\phi_{k}-\phi|^{p}\dx=\int_{\omega'}|g||\phi_{k}-\phi|^{p}\dx&\leq \varepsilon\int_{\omega'}|\nabla(\phi_{k}-\phi)|^{p}\dx+C\int_{\omega'}|\phi_{k}-\phi|^{p}\dx\\	&\leq\varepsilon\int_{\omega'}|\nabla(\phi_{k}-\phi)|^{p}\dx+\varepsilon.
\end{align*}
Since~$\{\phi_{k}\}_{k\in\mathbb{N}}$ is bounded in~$\widetilde{W}^{1,p}_{0}(\Omega)$, we deduce that~$\lim_{k\rightarrow\infty}\int_{\omega}|g||\phi_{k}-\phi|^{p}\dx=0.$ Then$$\lim_{k\rightarrow \infty}\int_{\omega}|g||\phi_{k}|^{p}=\int_{\omega}|g||\phi|^{p}.$$ For~$g\in\widetilde{\mathcal{H}}^{0}(\Omega)$, the proof is similar to the counterpart of (1) and hence omitted.
\eproof
The following theorem is a generalization of \cite[Theorem 6.2.2]{Anoop} and the proof is similar.%\Hmm{For the proof, see [V2, Theorem 4.13].}
\btheorem\label{Anoopmini}
Suppose that% Assumptions \ref{subass} and \ref{cuc} also hold, and
~$V$ is nonnegative in~$\Omega$. For every~$g\in\mathcal{H}(\Omega)\setminus\{0\}$, % such that there exists a positive constant~$C$ such that~$Q_{p,\mathcal{A},V-C|g|}[\phi]\geq 0$ for all~$\phi\in \widetilde{W}^{1,p}_{0}(\Omega)$,
if~$T_{g}$ satisfies  Condition (H0), %for every bounded sequence $\{\phi_{k}\}_{k\in\mathbb{N}}\subseteq W^{1,p}(\Omega)\cap C_{c}(\Omega)$ in~$\widetilde{W}^{1,p}_{0}(\Omega)$, there exists~$\phi\in \widetilde{W}^{1,p}_{0}(\Omega)$ such that up to a subsequence, $\{\phi_{k}\}_{k\in\mathbb{N}}$ converges a.e. in~$\Omega$ and weakly in~$\widetilde{W}^{1,p}_{0}(\Omega)$  simultaneously to~$\phi$, and~$\lim_{k\rightarrow\infty}T_{g}(\phi_{k})=T_{g}(\phi)$,
then~$S_{g}$ is attained in~$\widetilde{W}^{1,p}_{0}(\Omega)$.% In other words, under the conditions stated above, there exists~$\phi\in \widetilde{W}^{1,p}_{0}(\Omega)$ such that~$S_{g}=Q[\phi]$ and~$\int_{\Omega}|g||\phi|^{p}\dx=1$.
\etheorem
\bproof
%The proof is analogous to that of \cite[Theorem 6.2.2]{Anoop}.
By considering~$|g|$, we may assume that~$g\geq 0$. Inasmuch as~$g\in\mathcal{H}(\Omega)\setminus\{0\}$, we may find a sequence~$\{\phi_{k}\}_{k\in\mathbb{N}}\subseteq W^{1,p}(\Omega)\cap C_{c}(\Omega)$ such that~$S_{g}=\lim_{k\rightarrow\infty}Q[\phi_{k}]$ and that for all~$k\in\mathbb{N}$,~$\int_{\Omega}|g||\phi_{k}|^{p}\dx=1$. Then~$\{\phi_{k}\}_{k\in\mathbb{N}}$ is bounded in~$\widetilde{W}^{1,p}_{0}(\Omega)$ as~$V\geq 0$ and~$\mathcal{V}$ is a Hardy-weight of~$Q$ in~$\Omega$. Due to Condition (H0) and Lemma \ref{pav0}, there exists~$\phi\in \widetilde{W}^{1,p}_{0}(\Omega)$ such that up to a subsequence, $\{\phi_{k}\}_{k\in\mathbb{N}}$ converges to~$\phi$ weakly  in~$\widetilde{W}^{1,p}_{0}(\Omega)$ and a.e. in~$\Omega$, and~$\lim_{k\rightarrow\infty}T_{g}(\phi_{k})=T_{g}(\phi)$. Then~$T_{g}(\phi)=1$ since~$T_{g}(\phi_{k})=1$ for all~$k\in\mathbb{N}$. A trivial verification shows that $\{\nabla\phi_{k}\}_{k\in\mathbb{N}}$ converges weakly to~$\nabla\phi$ in~$\big(\int_{\Omega}^{\oplus}\R^{n}_{x}\dx\big)_{L^{p}(\Omega)}$. Then~$\int_{\Omega}|\nabla\phi|^{p}_{\mathcal{A}}\dx\leq \liminf_{k\rightarrow\infty}\int_{\Omega}|\nabla\phi_{k}|^{p}_{\mathcal{A}}\dx$. Since~$V\geq 0$ and~$\{\phi_{k}\}_{k\in\mathbb{N}}$ converges to~$\phi$ a.e. in~$\Omega$, Fatou's lemma leads to~$\int_{\Omega}V|\phi|^{p}\dx\leq \liminf_{k\rightarrow\infty}\int_{\Omega}V|\phi_{k}|^{p}\dx$.
It follows that 
$$S_{g}\leq Q[\phi]\leq \liminf_{k\rightarrow\infty}Q[\phi_{k}]=\lim_{k\rightarrow\infty}Q[\phi_{k}]=S_{g},$$
where the first inequality is justified by Lemma \ref{varhc}. We readily get~$S_{g}=Q[\phi]$. 
 %and the second one is because $\{\phi_{k}\}_{k\in\mathbb{N}}$ converges weakly to~$\phi$ in~$W^{1,p;0}_{\mathcal{A},V^{+}}(\Omega)$. 
\iffalse
\begin{align*}
S_{g}%\leq Q[\phi]\leq \liminf_{k\rightarrrow\infty}Q[\phi_{k}]\leq S_{g},
\end{align*}
\fi
\eproof
\subsection{A spectral gap phenomenon}
In this subsection, we show that for every~$g\in\mathcal{H}(\Omega)$, under some further assumptions, if~$g$ has a spectral gap, then the Hardy constant is achieved.

Note that by Lemma \ref{pcfw}, every subcritical functional~$Q$ in~$\Omega$ is also subcritical in all subdomains of~$\Omega$. 

\textbf{Throughout the present subsection, we assume that~$\Omega$ has a smooth exhaustion~$\{\omega_{i}\}_{i\in\mathbb{N}}$ such that $\omega_{i+1}\setminus\overline{\omega_{i}}$ are domains for all~$i\in\mathbb{N}$, and let us fix such an exhaustion}.
\bremark
\emph{It can be easily proved that for every smooth exhaustion~$\{\omega_{i}\}_{i\in\mathbb{N}}$ of~$\Omega$, if~$\omega_{i+1}\setminus\overline{\omega_{i}}$ is a domain for all~$i\in\mathbb{N}$, then~$\Omega\setminus\overline{\omega_{i}}$ is a domain for all~$i\in\mathbb{N}$.}
\eremark
\iffalse
\bdefinition
\emph{%Suppose that Assumptions \ref{subass} and \ref{cuc} also hold.
For every~$g\in\mathcal{H}(\Omega)$, let
$$S_{g}^{\infty}\triangleq S_{g}^{\infty}\triangleq\sup\inf\left\{Q[\phi;\Omega\setminus K]~\bigg|~\phi\in \widetilde{W}^{1,p}_{0}(\Omega\setminus K)~\mbox{and} \int_{\Omega\setminus K}|g||\phi|^{p}\dx=1\right\},$$
where the supremum is taken over all compact subsets of~$\Omega$.
}
\edefinition
The following lemma is clear by Lemma \ref{varhc}.
\fi
\bdefinition
%Suppose that Assumptions \ref{subass} and \ref{cuc} also hold. Then 
\emph{For every~$g\in\mathcal{H}(\Omega)$, % and for every smooth exhaustion~$\{\omega_{i}\}_{i\in\mathbb{N}}$ of~$\Omega$,
let
$$S_{g}^{\infty}\triangleq S_{g}^{\infty}(\Omega)\triangleq\lim_{i\rightarrow\infty}\inf\left\{Q[\phi;\Omega\setminus\overline{\omega_{i}}]~\bigg|~\phi\in \widetilde{W}^{1,p}_{0}(\Omega\setminus\overline{\omega_{i}})~\mbox{and} \int_{\Omega\setminus\overline{\omega_{i}}}|g||\phi|^{p}\dx=1\right\}.$$}
\edefinition
\bremark
\emph{It is easy to see that~$S_{g}\leq S_{g}^{\infty}$.}
\eremark
\bdefinition
\emph{Let~$g\in\mathcal{H}(\Omega)$. If~$S_{g}< S_{g}^{\infty}$, we say that~$g$ has a \emph{spectral gap} for $Q$ in $\Gw$.} 
\edefinition
\iffalse
\bnotation
\emph{For every~$u\in W^{1,p}_{\loc}(\Omega)$, we denote $\dive{\mathcal{A}(x,\nabla u(x))}$ by~$\Delta_{p,\mathcal{A}}u(x)$ for almost all~$x\in\Omega$.}
\enotation
\fi
\iffalse
The following lemma is a generalization of \cite[Lemma A.1]{Das}. See also \cite[Lemma 2.10]{Devyver1}.
\blemma
Consider a positive~$u\in W^{1,p}_{\loc}(\Omega)\cap C(\Omega)$ and~$F\in C^{2}(0,\infty)$ with nonnegative derivative. When~$p<2$, for all critical points~$t$ of~$F$,~$\lim_{\tau\rightarrow t}F'(\tau)^{p-2}F''(\tau)=0$. %Then in the weak sense,$$-\Delta_{p,\mathcal{A}}(F(u))=-|F'(u)|^{p-2}((p-1)F''(u)|\nabla u|_{\mathcal{A}}^{p}+F'(u)\Delta_{p,\mathcal{A}}(u)).$$ Put another way,
For all~$\phi\in\core$, \begin{align*}
&\int_{\Omega}\mathcal{A}(x,\nabla F(u))\cdot\nabla\phi\dx\\
&=-\int_{\Omega}(p-1)|F'(u)|^{p-2}F''(u)|\nabla u|_{\mathcal{A}}^{p}\phi\dx+\int_{\Omega}\mathcal{A}(x,\nabla u)\cdot\nabla(F'(u)^{p-1}\phi)\dx.\end{align*}
%If, in addition,~$\Delta_{p,\mathcal{A}}u\in L^{1}_{\loc}(\Omega)$, then~$\Delta_{p,\mathcal{A}}F(u)\in L^{1}_{\loc}(\Omega)$. 
\elemma
\fi
%In a smooth bounded subdomain~$\omega$ of~$\Omega$, we refer to \cite{Byun,LA64,LA68} for global boundedness and the H\"older continuity up to the boundary of solutions in~$W^{1,p}_{0}(\omega)$ of the underlying equation with suitable assumptions. For more on  boundary regularity, see \cite{Giaquinta,Lieberman86,Lierberman88} and references therein.
%\iffalse
In the following lemma, we show that under certain extra regularity conidtions, if $g\in\mathcal{H}(\Omega)$ has a spectral gap for $Q$ in $\Gw$,  then the functional~$Q_{p,\mathcal{A},V-S_{g}|g|}$ is critical in~$\Omega$. The criticality of $Q_{p,\mathcal{A},V-S_{g}|g|}$ is used in Theorem \ref{att2}. See Theorem \ref{gethm} and Example \ref{gbexa} for sufficient conditions and specific examples on local gradient boundedness. 
\iffalse
\begin{definition}\label{strm}
\emph{In the definition of the Morrey space $M^{q}_{\loc}(p;\Omega)$, we require that  when~$p<n$,~$q>n$; %~$q>n/(p-\theta)$;
when~$p=n$, for some~$\theta\in (n-1,n)$ and all domains~$\omega\Subset\Omega$,
$$\sup_{\substack{y\in\omega\\0<r<\diam(\omega)}}\frac{1}{r^{\theta}}\int_{B_{r}(y)\cap\omega}|V|\dx<\infty.$$
We denote the new space by~$\widetilde{M}^{q}_{\loc}(p;\Omega)$.
}
\end{definition}
\fi
%\fi
\blemma\label{crucial}
%Suppose that Assumption \ref{strm} also holds for the Morrey space~$M^{q}_{\loc}(p;\Omega)$. 
Suppose that~$V\in\widetilde{M}^{q}_{\loc}(p;\Omega)$.
Consider~$g\in\mathcal{H}(\Omega)\cap \widetilde{M}^{q}_{\loc}(p;\Omega)\setminus\{0\}$ such that~$g$ has a spectral gap for $Q$ in~$\Omega$. %~$S_{g}<S_{g}^{\infty}$. % for some~$S_{g}<\mu_{1}<S_{g}^{\infty}$,~$V-\mu_{1}|g|$ is locally bounded in~$\Omega$, and
%Assume that for all~$i\in\mathbb{N}$, positive solutions of the equation~$Q'_{p,\mathcal{A},V-\mu_{1}|g|}[u]=0$ in~$\Omega\setminus\overline{\omega_{i}}$ have locally bounded gradients. 
\begin{enumerate}
\item For every~$S_{g}<\mu_{1}<S_{g}^{\infty}$, there exists an~$i(\mu_{1})\in\mathbb{N}$  such that~$Q_{p,\mathcal{A},V-\mu_{1}|g|}$ is nonnegative in~$\Omega\setminus \overline{\omega_{i(\mu_{1})}}$, and there exists a positive solution~$v$ of the equation~$Q'_{p,\mathcal{A},V-\mu_{1}|g|}[u]=0$ in~$\Omega\setminus \overline{\omega_{i(\mu_{1})}}$ such that~$v$ is continuous up to~$\partial\omega_{i(\mu_{1})}$ and~$v|_{\partial\omega_{i(\mu_{1})}}=0$.
\item  %For every~$S_{g}<t<S_{g}^{\infty}$, 
Suppose further that~$\nabla v$ is locally bounded in~$\omega_{i(\mu_{1})+2}\setminus\overline{\omega_{i(\mu_{1})}}$. %when~$p\leq n$. 
Then there exists a nonnegative~$\mathcal{W}\in M^{q}_{c}(p;\Omega)\setminus\{0\}$ % for some domain~$\omega\Subset\Omega$ 
such that~$Q_{p,\mathcal{A},V-\mu_{1}|g|+\mathcal{W}}$ is nonnegative in~$\Omega$. %where~$\mathcal{W}\in M^{q}_{c}(p;\Omega)$ means that for some domain~$\omega\Subset\Omega$,~$\mathcal{W}\in M^{q}(p;\omega)$ and~$\mathcal{W}|_{\Omega\setminus\omega}=0$; %where;
\item If, in addition, for every~$t,\tau\in\R$, the equation~$Q'_{p,\mathcal{A},V-t|g|+\tau\mathcal{W}}[u]=0$ has a positive supersolution with locally bounded gradient in~$\Omega$ and Assumption \ref{ngradb} holds for the equation~$Q'_{p,\mathcal{A},V-t|g|+\tau\mathcal{W}}[u]=0$, then the functional~$Q_{p,\mathcal{A},V-S_{g}|g|}$ is critical in~$\Omega$. 
\end{enumerate}
\elemma
\iffalse
\Hmm{You ever suggested to condense these items into a single lemma for clarity. However, with new functions in the previous steps, I need new regularity. For accuracy, I still think that this statement is better. In your paper with Ujjal, your proof is also divided into several steps. My items completely follow your route.}
\fi
\iffalse
\bremark
\emph{Note that in this lemma, all our conditions apply to subsequent items.}
\eremark
\fi
\bproof 
The proof is similar to that of \cite[Lemma 2.3]{Lamberti} (see also \cite[Lemma 5.2]{Das}).

(1): The first statement is due to the definition of~$S_{g}^{\infty}$. Pick a smooth exhaustion~$\{\omega_{j}'\}_{j\in\mathbb{N}}$
of~$\Omega\setminus\overline{\omega_{i(\mu_{1})}}$. Then up to a subsequence, we have~$\omega'_{j}\Subset \omega_{i(\mu_{1})+j}\setminus\overline{\omega_{i(\mu_{1})}}$ for every~$j\in\mathbb{N}$. For every~$j>i(\mu_{1})+1$, by \cite[Theorem 4.22]{HPR}, let~$v_{j}\in W^{1,p}_{0}(\omega_{j}\setminus \overline{\omega_{i(\mu_{1})}})$ be a positive solution of the equation~$Q'_{p,\mathcal{A},V-\mu_{1}|g|+1/j}[u]=f_{j}$ in~$\omega_{j}\setminus \overline{\omega_{i(\mu_{1})}}$, where~$0\leq f_{j}\in C^{\infty}_{c}(\omega_{j}\setminus\overline{\omega_{j-1}})\setminus\{0\}$ and~$v_{j}(x_{0})=1$ for some~$x_{0}\in\omega_{1}'$. Then~$v_{j}$ is also a positive solution of~$Q'_{p,\mathcal{A},V-\mu_{1}|g|+1/j}[u]=0$ in~$\omega_{j-1}\setminus \overline{\omega_{i(\mu_{1})}}$.
 By the Harnack convergence principle \cite[Theorem 3.5]{HPR}, up to a subsequence,~$v_{j}$ converges locally uniformly to a positive solution~$v$ of the equation~$Q'_{p,\mathcal{A},V-\mu_{1}|g|}[u]=0$ in~$\Omega\setminus \overline{\omega_{i(\mu_{1})}}$.
%\begin{itemize}
%\item $p\leq n$
By \cite[Theorem 2.3]{Byun} for~$p\leq n$ and \cite[Corollary 9.14]{Brezis} for~$p>n$,
$v_{j}$ is %continuous up to~$\partial\omega_{i(\mu_{1})}$ and 
continuous on~$\overline{\omega_{i(\mu_{1})+1}}\setminus\omega_{i(\mu_{1})}$ for every~$j>i(\mu_{1})+1$. %
By virtue of the weak comparison principle \cite[Theorem 4.25]{HPR}, we may easily show that~$v_{j}$ is uniformly bounded on~$\overline{\omega_{i(\mu_{1})+1}}\setminus\omega_{i(\mu_{1})}$. By \cite[Theorem 3.2]{HPR}, for all sufficiently large~$j$,~$v_{j}$ is uniformly H\"older continuous on~$\partial\omega_{i(\mu_{1})+1}$ and hence on~$\partial(\omega_{i(\mu_{1})+1}\setminus \overline{\omega_{i(\mu_{1})}})$. 
\begin{itemize}
\item~$p\leq n$. By \cite[Lemma 5.2]{Lieberman93} and \cite[p.~251, Theorem 1.1]{LA68}, for all sufficiently large~$j$,~$v_{j}$ is uniformly H\"older continuous on~$\overline{\omega_{i(\mu_{1})+1}}\setminus \omega_{i(\mu_{1})}$.
\item~$p>n$. Take~$\psi\in\core$ such that~$0\leq\psi\leq 1$,~$\psi|_{\overline{\omega_{i(\mu_{1})+1}}}= 1$, and~$\supp\psi\subseteq\omega_{i(\mu_{1})+2}$. 
Then for all sufficiently large~$j$,~$\psi v_{j}\in W^{1,p}_{0}(\omega_{j}\setminus \overline{\omega_{i(\mu_{1})}})$. %Since~$v_{i}\in W^{1,p}_{0}(\omega_{i}\setminus \overline{\omega_{i(\mu_{1})}})$ is a positive solution of the equation~$Q'_{p,\mathcal{A},V-\mu_{1}|g|+1/i}[u]=f_{i}$ in~$\omega_{i}\setminus \overline{\omega_{i(\mu_{1})}}$ 
For all sufficiently large~$j$,
We may test~$\psi v_{j}$ in the definition of~$v_{j}$ as a solution of~$Q'_{p,\mathcal{A},V-\mu_{1}|g|+1/j}[u]=f_{j}$ in~$\omega_{j}\setminus \overline{\omega_{i(\mu_{1})}}$ to deduce  that~$\Vert\nabla v_{j}\Vert_{L^{p}(\omega_{i(\mu_{1})+1}\setminus\overline{\omega_{i(\mu_{1})}})}$ is bounded. Then by \cite[Corollary 9.14]{Brezis},~$v_{j}$ is uniformly H\"older continuous on~$\overline{\omega_{i(\mu_{1})+1}}\setminus \omega_{i(\mu_{1})}$.
\end{itemize}
By the Arzel\`a-Ascoli theorem, up to a subsequence,~$v_{j}$ converges uniformly on~$\overline{\omega_{i(\mu_{1})+1}}\setminus \omega_{i(\mu_{1})}$ to a continuous function~$v'$.  Evidently,~$v|_{\overline{\omega_{i(\mu_{1})+1}}\setminus \overline{\omega_{i(\mu_{1})}}}=v'$. Then~$v$ is continuous up to~$\partial\omega_{i(\mu_{1})}$ and~$v|_{\partial\omega_{i(\mu_{1})}}=0$.
\iffalse
\item $p>n$: By \cite[Corollary 9.14]{Brezis}, $v_{i}$ is %continuous up to~$\partial\omega_{i(\mu_{1})}$ and 
continuous and bounded by~$M_{i}>0$ on~$\overline{\omega_{i(\mu_{1})+1}}\setminus\omega_{i(\mu_{1})}$ for every~$i>i(\mu_{1})+1$. Then we consider~$v_{i}/\max\{M_{i},\Vert v_{i}\Vert_{W^{1,p}(\omega_{i(\mu_{1})+1}\setminus\overline{\omega_{i(\mu_{1})}})}\}$ and use \cite[Corollary 9.14]{Brezis}.
\fi
%\end{itemize}

(2): Let~$m\triangleq\min_{\partial\omega_{i(\mu_{1})+1}}v(x)$. Then~$m>0$. For each~$0<\varepsilon<m/8$, take a~$C^{2}$ function~$f$ from~$[0,\infty)$ to~$[0,\infty)$ such that~$f|_{[0,2\varepsilon]} = \varepsilon$,~$f(t)=t$ for all~$t\in [4\varepsilon, \infty)$, and~$f'(t)>0$ for all~$t>2\varepsilon$. Simultaneously, we may also require that~$\lim_{t\rightarrow 2\varepsilon}|f'(t)|^{p-2}f''(t)=0$, where it is understood that~$(|f'(t)|^{p-2}f''(t))|_{[0,2\varepsilon]} =  
 0$. Then~$|f'(t)|^{p-2}f''(t)$ is continuous on~$[0,\infty)$. For example, we extend~$f$ sufficiently around~$2\varepsilon$ by~$f(t)\triangleq\varepsilon+(t-2\varepsilon)^{\gamma}$ with the constant~$\gamma>\max\{p',2\}$.

 Now let~$\bar{v}(x)\triangleq f(v(x))$ for all~$x\in\overline{\omega_{i(\mu_{1})+1}}\setminus\omega_{i(\mu_{1})}$. Since~$v$ is continuous up to~$\partial\omega_{i(\mu_{1})}$ and~$v|_{\partial\omega_{i(\mu_{1})}}=0$, we may find open neighborhoods~$U$ of~$\partial\omega_{i(\mu_{1})}$ and~$U'$ of~$\partial\omega_{i(\mu_{1})+1}$ such that$$\bar{v}|_{U\setminus\omega_{i(\mu_{1})}} = \varepsilon\quad\mbox{and}\quad\bar{v}|_{U'\cap\overline{\omega_{i(\mu_{1})+1}}}=v.$$ Then we define~$\bar{v}|_{\omega_{i(\mu_{1})}} = \varepsilon$ and~$\bar{v}|_{\Omega\setminus\overline{\omega_{i(\mu_{1})+1}}} =  v$. Let
 $$\mathcal{W}\triangleq\begin{cases}|V-\mu_{1}|g||&~\mbox{in}~\Gamma_{1}\triangleq U\cup\omega_{i(\mu_{1})},\\		0&~\mbox{in}~\Gamma_{2}\triangleq(\Omega\setminus\omega_{i(\mu_{1})+1})\cup(U'\cap \omega_{i(\mu_{1})+1}),\\	
V'&~\mbox{in}~\Omega\setminus(\Gamma_{1}\cup\Gamma_{2}),
 \end{cases}
  $$
  where$$V'\triangleq\frac{|-(p-1)|f'(v)|^{p-2}f''(v)|\nabla v|_{\mathcal{A}}^{p}-f'(v)^{p-1}(V-\mu_{1}|g|)v^{p-1}+(V-\mu_{1}|g|)\bar{v}^{p-1}|}{\bar{v}^{p-1}}.$$ It is a simple matter to check that~$\mathcal{W}\in M^{q}_{c}(p;\Omega)$. For each nonnegative~$\phi\in \core$, let
$$\psi\triangleq\begin{cases} 0&~\mbox{in}~\Gamma_{1},\\		\phi &~\mbox{in}~\Gamma_{2},\\	
f'(v)^{p-1}\phi&~\mbox{in}~\Gamma_{3}.
 \end{cases}$$
Then~$\psi\in W^{1,p}_{c}(\Omega\setminus\overline{\omega_{i(\mu_{1})}})$. It follows that$$\int_{\Gamma_{2}\cup\Gamma_{3}}\mathcal{A}(x,\nabla v)\cdot\nabla\psi\dx+\int_{\Gamma_{2}\cup\Gamma_{3}}(V-\mu_{1}|g|)v^{p-1}\psi\dx=0.$$
We are going to show that for all~$\phi\in\core$, $$\int_{\Omega}\mathcal{A}(x,\nabla \bar{v})\cdot\nabla\phi\dx+\int_{\Omega}(V-\mu_{1}|g|+\mathcal{W})\bar{v}^{p-1}\phi\dx\geq 0.$$ Then~$\bar{v}$ is a positive supersolution of~$Q'_{p,\mathcal{A},V-\mu_{1}|g|+\mathcal{W}}[u]=0$ in~$\Omega$. By the AAP theorem,~$Q_{p,\mathcal{A},V-\mu_{1}|g|+\mathcal{W}}$ is nonnegative in~$\Omega$. Note that\begin{align}\label{gamma1}
&\int_{\Gamma_{1}}\mathcal{A}(x,\nabla \bar{v})\cdot\nabla\phi\dx+\int_{\Gamma_{1}}(V-\mu_{1}|g|+\mathcal{W})\bar{v}^{p-1}\phi\dx\notag\\
&=\int_{\Gamma_{1}}\varepsilon^{p-1}(V-\mu_{1}|g|+\mathcal{W})\phi\dx\geq 0.
\end{align} 
Moreover,\begin{align*}
\int_{\Gamma_{2}\cup\Gamma_{3}}\mathcal{A}(x,\nabla \bar{v})\cdot\nabla\phi\dx=\int_{\Gamma_{2}}\mathcal{A}(x,\nabla v)\cdot\nabla\phi\dx+\int_{\Gamma_{3}}\mathcal{A}(x,\nabla \bar{v})\cdot\nabla\phi\dx.
\end{align*}
By the chain and product rules, we calculate:  \begin{align*}
&\int_{\Gamma_{3}}\mathcal{A}(x,\nabla \bar{v})\cdot\nabla\phi\dx\\
&=-\int_{\Gamma_{3}}(\mathcal{A}(x,\nabla v)\cdot\nabla f'(v)^{p-1})\phi\dx+\int_{\Gamma_{3}}\mathcal{A}(x,\nabla v)\cdot\nabla(f'(v)^{p-1}\phi)\dx\\
&=-\int_{\Gamma_{3}}(p-1)|f'(v)|^{p-2}f''(v)|\nabla v|_{\mathcal{A}}^{p}\phi\dx+\int_{\Gamma_{3}}\mathcal{A}(x,\nabla v)\cdot\nabla\psi\dx.
\end{align*}
Then \begin{align*}
&\int_{\Gamma_{2}\cup\Gamma_{3}}\mathcal{A}(x,\nabla \bar{v})\cdot\nabla\phi\dx\\
&=-\int_{\Gamma_{3}}(p-1)|f'(v)|^{p-2}f''(v)|\nabla v|_{\mathcal{A}}^{p}\phi\dx+\int_{\Gamma_{2}\cup\Gamma_{3}}\mathcal{A}(x,\nabla v)\cdot\nabla\psi\dx\\
&=-\int_{\Gamma_{3}}(p-1)|f'(v)|^{p-2}f''(v)|\nabla v|_{\mathcal{A}}^{p}\phi\dx-\int_{\Gamma_{2}\cup\Gamma_{3}}(V-\mu_{1}|g|)v^{p-1}\psi\dx.
\end{align*}
Furthermore,
\begin{align*}
&\int_{\Gamma_{2}\cup\Gamma_{3}}\mathcal{A}(x,\nabla \bar{v})\cdot\nabla\phi\dx+\int_{\Gamma_{2}\cup\Gamma_{3}}(V-\mu_{1}|g|+\mathcal{W})\bar{v}^{p-1}\psi\dx\\
&=-\int_{\Gamma_{3}}(p-1)|f'(v)|^{p-2}f''(v)|\nabla v|_{\mathcal{A}}^{p}\phi\dx-\int_{\Gamma_{3}}(V-\mu_{1}|g|)v^{p-1}\psi\dx\\
&+\int_{\Gamma_{3}}(V-\mu_{1}|g|+\mathcal{W})\bar{v}^{p-1}\psi\dx\geq 0,
\end{align*}
which, together with the inequality \eqref{gamma1}, completes the proof.

(3): Let
$$M\triangleq\{(t,\tau)\in [S_{g}, \mu_1]\times\R~|~Q_{p,\mathcal{A},V-t|g|+\tau\mathcal{W}}\geq 0~\mbox{in }\Omega\}.$$ 
By \cite[Corollary 6.17]{HPR},~$M$ is convex. Note that~$(S_{g},0)$ and $(\mu_1,1)$ both belong to~$M$, which allows us to define~$\nu(t)\triangleq \min\{\tau\in\R~|~Q_{p,\mathcal{A},V-t|g|+\tau\mathcal{W}}\geq 0~\mbox{in }\Omega\}$ for all~$t\in [S_{g}, \mu_1]$. Then~$\nu$ is a convex function on $[S_{g}, \mu_1]$ by \cite[Corollary 6.17]{HPR}. Moreover,~$Q_{p,\mathcal{A},V-t|g|+\nu(t)\mathcal{W}}$ is critical for all~$t\in [S_{g}, \mu_1]$.
 %~$\mathcal{W}$ is a Hardy-weight of~$Q_{p,\mathcal{A},V-\mu_{t}|g|+\nu(t)\mathcal{W}}$ for all~$t\in [0,1]$.
Otherwise, by \cite[Theorem 6.23]{HPR},~$\mathcal{W}$ is a Hardy-weight of~$Q_{p,\mathcal{A},V-t|g|+\nu(t)\mathcal{W}}$ for some~$t_{0}\in [S_{g}, \mu_1]$, which contradicts the definition of~$\nu(t_{0})$. We may easily verify that~$\nu(t)>0$ for all~$t\in (S_{g},\mu_1]$ and that~$\nu(S_{g})\leq 0$. Furthermore, the convexity of~$\nu$ implies that~$\nu(S_{g})=0$. Hence,~$Q_{p,\mathcal{A},V-S_{g}|g|}$ is critical.
\eproof
\iffalse
\bproof
It is easy to see that~$\{\hat{\phi}_{k}\}_{k\in\mathbb{N}}\subseteq W^{1,p}(\Omega)\cap C_{c}(\Omega)$. Then for all~$k\in\mathbb{N}$,~$Q[\hat{\phi}_{k}]\geq 0$. The proof of Lemma \ref{hfund} (4) remains valid to guarantee that for all~$k\in\mathbb{N}$,~$Q[\hat{\phi}_{k}]\leq Q[\phi_{k}]$. It follows that~$\lim_{k\rightarrow\infty}Q[\hat{\phi}_{k}]=0$. Since~$\{\phi_{k}\}_{k\in\mathbb{N}}$ is a null sequence, there exists an open set~$U\Subset\Omega$ such that~$\Vert\phi_{k}\Vert_{L^{p}(U)}=1$ for all~$k\in\mathbb{N}$ and~$\Vert\Phi\Vert_{L^{p}(U)}=1$. Moreover, by a diagonalization argument, up to a subsequence, we may also assume that~$\{\phi_{k}\}_{k\in\mathbb{N}}$ converges to~$\Phi$ a.e. in~$\Omega$. It is immediate that up to a subsequence,~$\{\hat{\phi}_{k}\}_{k\in\mathbb{N}}$ converges to~$\Phi$ a.e. in~$\Omega$. Lebesgue's dominated convergence theorem shows that up to a subsequence,~$\{\hat{\phi}_{k}\}_{k\in\mathbb{N}}$ converges to~$\Phi$ in~$L^{p}_{\loc}(\Omega)$. Simultaneously, we also deduce that~$\lim_{k\rightarrow\infty}\Vert\hat{\phi}_{k}\Vert_{L^{p}(U)}=1$. %Now the lemma follows from \cite[Lemma 6.6]{HPR}.
\eproof
\fi
Now we show that under further assumptions, the following \emph{spectral gap phenomenon} holds true. The proof is analogous to that of \cite[Theorem 5.1]{Das}. See also \cite[Theorem 1.1]{Das2}.
\btheorem\label{att2}
%Suppose that Assumptions \ref{subass} and \ref{cuc} also hold.
Let~$V\in\widetilde{M}^{q}_{\loc}(p;\Omega)$ and~$g\in\mathcal{H}(\Omega)\cap \widetilde{M}^{q}_{\loc}(p;\Omega)\setminus\{0\}$. Suppose that~$S_{g}<S_{g}^{\infty}$, that~$Q_{p,\mathcal{A},V-S_{g}|g|}$ is critical in~$\Omega$, and that~$Q'_{p,\mathcal{A},V-S_{g}|g|}[u]=0$ satisfies Assumption \ref{ngradb} in~$\Omega$ and~$\Omega\setminus\overline{\omega_{i}}$ for all~$i\in\mathbb{N}$. If~$\int_{\Omega}V^{-}\Phi^{p}\dx<\infty$ for the ground state~$\Phi$ of ~$Q_{p,\mathcal{A},V-S_{g}|g|}$, then~$\Phi\in \widetilde{W}^{1,p}_{0}(\Omega)$ and~$S_{g}$ is attained at~$\Phi$.
%and~$S_{g}\int_{\Omega}|g||\Phi|^{p}\dx=Q[\Phi]$.
\etheorem
%\iffalse
\bproof
 By Lemma \ref{inull}, we may find a sequence~$\{\hat{\phi}_{k}\}_{k\in\mathbb{N}}\subseteq W^{1,p}(\Omega)\cap C_{c}(\Omega)$ %is still a null-sequence of~$Q$ converging to~$\Phi$ in~$L^{p}_{\loc}(\Omega)$.
such that~$0\leq\hat{\phi}_{k}\leq\Phi$ for all~$k\in\mathbb{N}$, that~$\lim_{k\rightarrow\infty}Q_{p,\mathcal{A},V-S_{g}|g|}[\hat{\phi}_{k}]=0$, and that~$\{\hat{\phi}_{k}\}_{k\in\mathbb{N}}$ converges to~$\Phi$ a.e. in~$\Omega$ and in~$L^{p}_{\loc}(\Omega)$. For every~$S_{g}<t<S_{g}^{\infty}$, there exists~$i(t)\in\mathbb{N}$ such that~$Q_{p,\mathcal{A},V-t|g|}$ is nonnegative in~$\Omega\setminus\overline{\omega_{i(t)}}$. By virtue of \cite[Theorem 7.7]{HPR} and the method for Theorem \ref{mdecay}, we may easily show that~$\int_{\Omega}|g||\Phi|^{p}\dx<\infty$. Note that for all~$k\in\mathbb{N}$,
$$Q_{p,\mathcal{A},V^{+}}[\hat{\phi}_{k}]=Q_{p,\mathcal{A},V-S_{g}|g|}[\hat{\phi}_{k}]+\int_{\Omega}V^{-}\hat{\phi}_{k}^{p}\dx+S_{g}\int_{\Omega}|g|\hat{\phi}_{k}^{p}\dx.$$
Since~$0\leq \hat{\phi}_{k}\leq \Phi$ for all~$k\in\mathbb{N}$ and~$\lim_{k\rightarrow\infty}Q_{p,\mathcal{A},V-S_{g}|g|}[\hat{\phi}_{k}]=0$, we conclude that~$Q_{p,\mathcal{A},V^{+}}[\hat{\phi}_{k}]$ and~$\int_{\Omega}\mathcal{V}|\hat{\phi}_{k}|^{p}\dx$ (see Definition \ref{PAVHSS}) are both bounded with respect to~$k$. Now it is clear that~$\{\hat{\phi}_{k}\}_{k\in\mathbb{N}}$ is bounded in~$\widetilde{W}^{1,p}_{0}(\Omega)$. By Lemma \ref{pav0}, up to a subsequence,~$\{\hat{\phi}_{k}\}_{k\in\mathbb{N}}$ converges to some~$\phi$ weakly in~$\widetilde{W}^{1,p}_{0}(\Omega)$ and strongly in~$L^{p}_{\loc}(\Omega)$. Then $\phi=\Phi$ since~$\{\hat{\phi}_{k}\}_{k\in\mathbb{N}}$ converges to~$\Phi$ in~$L^{p}_{\loc}(\Omega)$. Hence,~$\Phi\in \widetilde{W}^{1,p}_{0}(\Omega)$. Furthermore, by Lemma \ref{varhc},$$Q[\Phi]\geq S_{g}\int_{\Omega}|g|\Phi^{p}\dx.$$ Repeating the counterpart in the proof of Theorem \ref{Anoopmini}, we may estimate:
\begin{align*}
Q_{p,\mathcal{A},V^{+}}[\Phi]&=\int_{\Omega}|\nabla\Phi|_{\mathcal{A}}^{p}\dx+\int_{\Omega}V^{+}\lim_{k\rightarrow\infty}|\hat{\phi}_{k}|^{p}\dx\\
&\leq \liminf_{k\rightarrow\infty}\int_{\Omega}|\nabla\hat{\phi}_{k}|_{\mathcal{A}}^{p}\dx+\liminf_{k\rightarrow\infty}\int_{\Omega}V^{+}|\hat{\phi}_{k}|^{p}\dx\\
&\leq \liminf_{k\rightarrow\infty}Q_{p,\mathcal{A},V^{+}}[\hat{\phi}_{k}]\\
&\leq \lim_{k\rightarrow\infty}Q_{p,\mathcal{A},V-S_{g}|g|}[\hat{\phi}_{k}]+\int_{\Omega}V^{-}\Phi^{p}\dx+S_{g}\int_{\Omega}|g|\Phi^{p}\dx\\
&=\int_{\Omega}V^{-}\Phi^{p}\dx+S_{g}\int_{\Omega}|g|\Phi^{p}\dx,
\end{align*}
which implies that
$$Q[\Phi]\leq S_{g}\int_{\Omega}|g|\Phi^{p}\dx.$$
In conclusion,~$Q[\Phi]= S_{g}\int_{\Omega}|g|\Phi^{p}\dx.$%\red{Obviously, we have~$\int_{\Omega}|g|\Phi^{p}\dx>0$ as~$\Phi$ is positive in~$\Omega$ and~$g\neq 0$.}
%, and~$\lim_{k\rightarrow\infty}\Vert\phi_{k}\Vert_{L^{p}(U)}=1$ for some open set~$U\Subset\Omega$.
\eproof
%\fi
The proof of the following lemma is similar to that of \cite[Lemma 5.6]{Das} and hence omitted.
\blemma
If~$g\in\mathcal{H}(\Omega)\setminus\{0\}$ and~$S_{g}=S_{g}^{\infty}$, then~$g\notin\mathcal{H}^{0}(\Omega)$.
\elemma
%\iffalse
\iffalse
\bproof
By Assumption \ref{mazass},~$B_{g,\mathcal{A},V}$ is equivalent to~$\Vert\cdot\Vert_{p,\mathcal{A},V}$. Take any sequence~$\{g_{k}\}_{k\in\mathbb{N}}\subseteq L^{\infty}_{c}(\Omega)$ such that for all~$k\in\mathbb{N}$,~$\supp g_{k}\Subset\omega_{k}$, where~$\{\omega_{k}\}_{k\in\mathbb{N}}$ is a smooth exhaustion of~$\Omega$. Then for all~$k\in\mathbb{N}$,
$$B_{g_{k}-g}(\Omega)\geq B_{g_{k}-g}(\Omega\setminus\bar{\omega}_{k})=\Vert g\Vert_{\mathcal{H}}(\Omega\setminus\bar{\omega}_{k})=(S_{g}(\Omega\setminus\bar{\omega}_{k}))^{-1}.$$ Since~$g\neq 0$, we have~$S_{g}<\infty$. Note that$$S_{g}=S_{g}^{\infty}=\lim_{k\rightarrow\infty}S_{g}(\Omega\setminus\bar{\omega}_{k}).$$
Hence, it can not be $\lim_{k\rightarrow\infty}B_{g_{k}-g}(\Omega)=0$, i.e.,~$g\notin\mathcal{H}^{0}(\Omega)$.
\eproof
\fi
%\fi
\subsection{The concentration compactness method}
In the present subsection, we use the concentration compactness method to obtain the third attainment. As we shall see, the Riesz representation theorem is a substantial ingredient in our approach. For more on this method, see \cite{ADas,Das} and references therein. The counterpart in \cite{Das} only deals with the case of $\Omega=\R^{n}$.

\textbf{Throughout this subsection, we assume that for every~$x\in\partial\Omega$,~$\Omega\cap B_{r}(x)$ is a domain for all sufficiently small~$r>0$.} For example, Lipschitz domains satisfy this condition.
%\begin{itemize}
%\item~$V\geq 0;$
%\item

%\item for all sufficiently large~$R>0$,~$\Omega\setminus \overline{B_{R}(0)}$ is a domain;
%\end{itemize}
For every Hardy-weight~$g$, we first define another two constants~$\mathbb{S}_{g}(x)$, depending also on~$x\in\overline{\Omega}$, and~$S_{g}^{*}$, and a portion~$\Sigma_{g}$ of~$\overline{\Omega}$.
\bdefinition\label{sigmadfn}
\emph{For every~$x\in\overline{\Omega}$ and~$g\in\mathcal{H}(\Omega)$, let
$$\mathbb{S}_{g}(x)\triangleq \lim_{r\rightarrow 0^{+}}\inf\left\{Q[\phi;\Omega\cap B_{r}(x)]~\bigg|~\phi\in \widetilde{W}^{1,p}_{0}(\Omega\cap B_{r}(x))~\mbox{and} \int_{\Omega\cap B_{r}(x)}|g||\phi|^{p}\dx=1\right\},$$~$S_{g}^{*}\triangleq \inf_{x\in\overline{\Omega}}\mathbb{S}_{g}(x)$, and~$\Sigma_{g}\triangleq\{x\in\overline{\Omega}~|~\mathbb{S}_{g}(x)<\infty\}$.
\iffalse
If for all sufficiently large~$R>0$,~$\Omega\setminus \overline{B_{R}(0)}$ is a domain, we also define
$$\mathbb{S}^{\infty}_{g,\mathcal{A},V}(\Omega)\triangleq \lim_{R\rightarrow\infty}\inf\left\{Q\left[\phi;\Omega\setminus \overline{B_{R}(0)}\right]~\bigg|~\phi\in \widetilde{W}^{1,p}_{0}(\Omega\setminus \overline{B_{R}(0)})~\mbox{and} \int_{\Omega\setminus \overline{B_{R}(0)}}|g||\phi|^{p}\dx=1\right\}.$$
\fi
}
\edefinition
\bremark
\emph{It can be easily seen that~$S_{g}\leq S_{g}^{*}$.
}
\eremark
The proofs of the following three lemmas are respectively analogous to the counterparts of \cite[Lemma 5.7]{Das}, \cite[p.~22]{Lieb}, and \cite[Lemma 6.3]{Das}, and hence omitted. 
\blemma\label{sginfty}
For all~$g\in\mathcal{H}(\Omega)\cap M^{q}_{\loc}(p;\Omega)$ and~$x\in\Omega$,~$\mathbb{S}_{g}(x)=\infty$. Equivalently, for all~$g\in\mathcal{H}(\Omega)\cap M^{q}_{\loc}(p;\Omega)$,~$\Sigma_{g}\subseteq\partial\Omega$.
\elemma
\blemma\label{numineq}
For every~$\varepsilon>0$, there exists a constant~$C(\varepsilon,p)>0$ such that for almost all~$x\in\Omega$ and all~$\xi,\eta\in\R^{n}$,
$$||\xi+\eta|_{\mathcal{A}}^{p}-|\eta|_{\mathcal{A}}^{p}|\leq\varepsilon|\eta|_{\mathcal{A}}^{p}+C(\varepsilon,p)|\xi|_{\mathcal{A}}^{p}.$$
\elemma
\iffalse
\bproof
The proof is analogous to that in \cite[p.~22]{Lieb}.
\iffalse
Recall that for~$1<p<\infty$, the function~$f(t)\triangleq |t|^{p}$ is convex in~$\R$. Let~$\lambda\triangleq (1+\varepsilon)^{-1/(p-1)}$. Then~$0<\lambda<1$. When~$|\xi+\eta|_{\mathcal{A}}\geq |\eta|_{\mathcal{A}}$, we estimate:
\begin{align*}
||\xi+\eta|_{\mathcal{A}}^{p}-|\eta|_{\mathcal{A}}^{p}|&=|\xi+\eta|_{\mathcal{A}}^{p}- |\eta|_{\mathcal{A}}^{p}\\
&\leq (|\xi|_{\mathcal{A}}+|\eta|_{\mathcal{A}})^{p}- |\eta|_{\mathcal{A}}^{p}\\
&=\left((1-\lambda)\frac{|\xi|_{\mathcal{A}}}{1-\lambda}+\lambda\frac{|\eta|_{\mathcal{A}}}{\lambda}\right)^{p}- |\eta|_{\mathcal{A}}^{p}\\
&\leq(1-\lambda)^{1-p}|\xi|^{p}_{\mathcal{A}}+\lambda^{1-p}|\eta|_{\mathcal{A}}^{p}- |\eta|_{\mathcal{A}}^{p}\\
&=\varepsilon|\eta|_{\mathcal{A}}^{p}+C(\varepsilon,p)|\xi|_{\mathcal{A}}^{p}.
\end{align*}
When~$|\xi+\eta|_{\mathcal{A}}\leq |\eta|_{\mathcal{A}}$, we estimate:
\begin{align*}
||\xi+\eta|_{\mathcal{A}}^{p}-|\eta|_{\mathcal{A}}^{p}|&=|\eta|_{\mathcal{A}}^{p}-|\xi+\eta|_{\mathcal{A}}^{p}\\
&\leq \varepsilon|\xi+\eta|_{\mathcal{A}}^{p}+C(\varepsilon,p)|\xi|_{\mathcal{A}}^{p}\\
&\leq \varepsilon|\eta|_{\mathcal{A}}^{p}+C(\varepsilon,p)|\xi|_{\mathcal{A}}^{p}.
\end{align*}
The proof is complete.
\fi
\eproof
\fi
\blemma\label{elimit}
Let~$\{\phi_{k}\}_{k\in\mathbb{N}}$ converge weakly in~$\widetilde{W}^{1,p}_{0}(\Omega)$ to~$\phi$. For every bounded~$\Phi\in C^{1}(\Omega)$ such that~$\supp\nabla\Phi$ is a compact subset of~$\Omega$, up to a subsequence,
$$\lim_{k\rightarrow\infty}\int_{\Omega}|(\phi_{k}-\phi)\nabla\Phi+\Phi\nabla(\phi_{k}-\phi)|_{\mathcal{A}}^{p}\dx=\lim_{k\rightarrow\infty}\int_{\Omega}|\nabla(\phi_{k}-\phi)|_{\mathcal{A}}^{p}|\Phi|^{p}\dx.$$
\elemma
\iffalse
\bproof
For every~$\varepsilon>0$, by Lemma \ref{numineq}, we may evaluate:
\begin{align*}
&\left|\int_{\Omega}|(\phi_{k}-\phi)\nabla\Phi+\Phi\nabla(\phi_{k}-\phi)|_{\mathcal{A}}^{p}\dx-\int_{\Omega}|\nabla(\phi_{k}-\phi)|_{\mathcal{A}}^{p}|\Phi|^{p}\dx\right|\\
&\leq \varepsilon\int_{\Omega}|\nabla(\phi_{k}-\phi)|_{\mathcal{A}}^{p}|\Phi|^{p}\dx+C(\varepsilon,p)\int_{\Omega}|\phi_{k}-\phi|^{p}|\nabla\Phi|_{\mathcal{A}}^{p}\dx.
\end{align*}
By Lemma \ref{pav0}, up to a subsequence,~$\{\phi_{k}\}_{k\in\mathbb{N}}$ converges in~$L^{p}_{\loc}(\Omega)$ to~$\phi$. Note that~$\supp \nabla\Phi$ is compact,~$\Phi\in C^{1}(\Omega)$ is bounded, and~$\{\phi_{k}\}_{k\in\mathbb{N}}$ is bounded in~$\widetilde{W}^{1,p}_{0}(\Omega)$. It follows that up to a subsequence,
$$\lim_{k\rightarrow\infty}\left(\int_{\Omega}|(\phi_{k}-\phi)\nabla\Phi+\Phi\nabla(\phi_{k}-\phi)|_{\mathcal{A}}^{p}\dx-\int_{\Omega}|\nabla(\phi_{k}-\phi)|_{\mathcal{A}}^{p}|\Phi|^{p}\dx\right)=0.$$
Moreover, the sequence~$\left\{\int_{\Omega}|\nabla(\phi_{k}-\phi)|_{\mathcal{A}}^{p}|\Phi|^{p}\dx\right\}_{k\in\mathbb{N}}$ is bounded and hence has a convergent subsequence. Then the desired equality follows.
\eproof
\fi
%\iffalse
\bdefinition
\emph{The potential~$V$ satisfies  \emph{Condition (H1)} if for all sequences~$\{\phi_{k}\}_{k\in\mathbb{N}}$ converging to~$\phi$ weakly in~$\widetilde{W}^{1,p}_{0}(\Omega)$, up to a subsequence,~$\lim_{k\rightarrow\infty}\int_{\Omega}V^{-}|\phi_{k}|^{p}\dx=\int_{\Omega}V^{-}|\phi|^{p}\dx$, and for every~$\phi\in\core$, up to a subsequence,~$\lim_{k\rightarrow\infty}\int_{\Omega}\phi V^{-}|\phi_{k}|^{p}\dx=\int_{\Omega}\phi V^{-}|\phi|^{p}\dx$.}
\edefinition
%\fi
\iffalse
\bdefinition
\emph{Let $\{\phi_{k}\}_{k\in\mathbb{N}}$ converge  to~$\phi$ weakly in~$\widetilde{W}^{1,p}_{0}(\Omega)$. The \emph{condition (H1)} for the potential~$V$ is that up to a subsequence,~$\lim_{k\rightarrow\infty}\int_{\Omega}V^{-}|\phi_{k}|^{p}\dx=\int_{\Omega}V^{-}|\phi|^{p}\dx$, and for every~$\phi\in\core$, up to a subsequence,~$\lim_{k\rightarrow\infty}\int_{\Omega}\phi V^{-}|\phi_{k}|^{p}\dx=\int_{\Omega}\phi V^{-}|\phi|^{p}\dx$.}
\edefinition
\fi
Now we present some examples realizing Condition (H1). See also \cite[p.~353]{Das}.
\bexample
\emph{\begin{itemize}
\item Trivially, all nonnegative potentials~$V\in M^{q}_{\loc}(p;\Omega)$ satisfy Condition (H1);
%\item If~$V^{-}\in L^{\infty}_{c}(\Omega)$, then by Theorem \ref{conh} (1),~$V$ satisfies Condition (H1).
\item %Suppose that \red{Assumption %\ref{subass}, \ref{cuc}, and 
  % \ref{mazass}} also holds. 
   If~$V^{-}\in \mathcal{H}^{0}(\Omega)$, then by Theorem \ref{conh} (1),~$V$ satisfies Condition (H1).
\end{itemize}}
\eexample
\textbf{From now on, throughout this subsection, we assume that~$V$ satisfies Condition (H1) and that~$g\in \mathcal{H}(\Omega)\setminus\{0\}$.} 
%By \cite[p.~132, Exercise 3]{Rudin}, all complex regular Borel measures form a Banach space~$M(\R^{n})$ with respect to the total variation.
%\paragraph{}

Let $\{\phi_{k}\}_{k\in\mathbb{N}}$ converge  to~$\phi$ weakly in~$\widetilde{W}^{1,p}_{0}(\Omega)$. See also \cite[p. 354]{Das}.
\begin{itemize}
\item For simplicity, we first assume that~$V\geq 0$. Consider the three sequences of measures: for every~$k\in\mathbb{N}$ and Borel set~$E$ in~$\Omega$,
$$\nu_{k}(E)\triangleq\int_{E}|g||\phi_{k}-\phi|^{p}\dx,\quad\gamma_{k}(E)\triangleq\int_{E}(|\nabla(\phi_{k}-\phi)|_{\mathcal{A}}^{p}+V|\phi_{k}-\phi|^{p})\dx,$$
and
$$\quad\gamma_{k}'(E)\triangleq\int_{E}(|\nabla \phi_{k}|_{\mathcal{A}}^{p}+V|\phi_{k}|^{p})\dx.$$ Then for all~$k\in\mathbb{N}$,~$\nu_{k},\gamma_{k},$ and~$\gamma_{k}'$ are %regular by \cite[Theorem 2.18]{Rudin} and
finite. By two simple extensions (see \cite[p.~62]{Federer} and \cite[Definition 1.2]{evansf}), we may also assume that these measures are Radon measures %\cite[Definition 1.9]{evansf} 
on~$\R^{n}$. Moreover, the three sequences~$\{\nu_{k}(\R^{n})\}_{k\in\mathbb{N}},\{\gamma_{k}(\R^{n})\}_{k\in\mathbb{N}},$ and~$\{\gamma'_{k}(\R^{n})\}_{k\in\mathbb{N}}$ are all bounded since $\{\phi_{k}\}_{k\in\mathbb{N}}$ converges  to~$\phi$ weakly in~$\widetilde{W}^{1,p}_{0}(\Omega)$. By the weak compactness for measures, %\cite[Theorem 1.41]{evansf},
there exist three Radon measures~$\nu,\gamma$, and~$\gamma'$ on~$\R^{n}$ such that up to a subsequence, for all~$\varphi\in C_{c}(\Omega)$, 
$$\lim_{k\rightarrow\infty}\int_{\Omega}\varphi \dnuk=\int_{\Omega}\varphi \dnu,\quad\lim_{k\rightarrow\infty}\int_{\Omega}\varphi \dgammak=\int_{\Omega}\varphi\dgamma,$$
and
$$\quad\lim_{k\rightarrow\infty}\int_{\Omega}\varphi \dgammaak=\int_{\Omega}\varphi\dgammaa.$$
Furthermore, by the proof of the Riesz representation theorem \cite[Theorem 2.14]{Rudin}, and the two previous extensions of measures, we may also assume that for every open set~$U$ and arbitrary set~$A$ in~$\Omega$,
$$\nu(U)=\sup\left\{\int_{\Omega}\varphi \dnu: \varphi \in C_{c}(U)~\mbox{and}~0\leq \varphi\leq 1\right\}$$
and $$\nu(A)=\inf\{\nu(U):A\subseteq U\subseteq\Omega~\mbox{and}~U~\mbox{is open}\}.$$ In addition,  we ensure~$\nu(A)=\nu(\Omega\cap A)$ for all~$A\subseteq \R^{n}$.
For~$\gamma$ and~$\gamma'$, we also have similar representations. In particular,~$\nu$,$\gamma$, and~$\gamma'$ are all finite.
\item Now we consider a general~$V\in M^{q}_{\loc}(p;\Omega)$. Recall that~$C_{c}(\Omega)$ is separable. %\cite[p.~125, Exercise 8 (i)]{Amann}. 
By virtue of the beginning argument of Lemma \ref{ng431}, and \cite[Theorem 2.14]{Rudin}, and repeating the proof of \cite[Theorem 1.41]{evansf}, we may find two Radon measures~$\gamma$ and~$\gamma'$ on~$\R^{n}$ with the same properties as the nonnegative case. The Radon measure~$\nu$ is obtained in the same way as the nonnegative case. In particular, up to a subsequence, for all~$\varphi\in C_{c}(\Omega)$, 
$$\lim_{k\rightarrow\infty}\int_{\Omega}\varphi \dnuk=\int_{\Omega}\varphi \dnu,\quad\lim_{k\rightarrow\infty}\int_{\Omega}\varphi(|\nabla(\phi_{k}-\phi)|_{\mathcal{A}}^{p}+V|\phi_{k}-\phi|^{p})\dx=\int_{\Omega}\varphi\dgamma,%\quad\mbox{and}\quad\lim_{k\rightarrow\infty}\int_{\Omega}\phi \dgammaak=\int_{\Omega}\phi\dgammaa,
$$
and$$\lim_{k\rightarrow\infty}\int_{\Omega}\varphi(|\nabla \phi_{k}|_{\mathcal{A}}^{p}+V|\phi_{k}|^{p})\dx=\int_{\Omega}\varphi\dgammaa.$$
\end{itemize}
\blemma\label{ng431}
 Let~$\{\phi_{k}\}_{k\in\mathbb{N}}$ converge to~$\phi$ weakly in~$\widetilde{W}^{1,p}_{0}(\Omega)$. Then for all Borel sets~$E$ in~$\Omega$,
 $$S^{*}_{g}\nu(E)\leq\gamma(E).$$In addition,~$\nu(\Omega\setminus\overline{\Sigma_{g}})=0$.
\elemma
\bproof
The proof is analogous to that of \cite[Lemma 6.4]{Das}.
For all~$\varphi\in C_{c}^{\infty}(\Omega)$ and~$k\in\mathbb{N}$,~$(\phi_{k}-\phi)\varphi\in \widetilde{W}^{1,p}_{0}(\Omega)$ (cf. \cite{Kinnunen}). By Lemma \ref{varhc}, for all~$k\in\mathbb{N}$,
\begin{align*}
\int_{\Omega}|g||(\phi_{k}-\phi)\varphi|^{p}\dx\leq \Vert g\Vert_{\mathcal{H}}\int_{\Omega}(|\nabla((\phi_{k}-\phi)\varphi)|_{\mathcal{A}}^{p}+V|(\phi_{k}-\phi)\varphi|^{p})\dx.
\end{align*}
By Lemma \ref{elimit} and letting~$k\rightarrow \infty$ in a suitable subsequence, we obtain
$$\int_{\Omega}|\varphi|^{p}\dnu\leq \Vert g\Vert_{\mathcal{H}}\int_{\Omega}|\varphi|^{p}\dgamma.$$ By virtue of \cite[Lemma 1.18]{Kinnunen}, we may deduce that for all nonnegative~$\varphi\in C_{c}(\Omega)$,
$$\int_{\Omega}\varphi\dnu\leq \Vert g\Vert_{\mathcal{H}}\int_{\Omega}\varphi\dgamma.$$ Then we may conclude that for all sets~$E$ in~$\R^{n}$,
\begin{equation}\label{mcompare}
\nu(E)\leq \Vert g\Vert_{\mathcal{H}}\gamma(E).
\end{equation}
The rest is easy and hence omitted.
\iffalse
For every~$x\in\overline{\Omega}$ and sufficiently small~$r>0$, by considering~$\varphi\in C_{c}^{\infty}(\Omega\cap B_{r}(x))$ and doing the same proof, we may show that $$\nu(B_{r}(x))\leq \Vert g\Vert_{\mathcal{H}}(\Omega\cap B_{r}(x))\gamma(B_{r}(x)).$$
It follows from the inequality \eqref{mcompare} that~$\nu$ is absolutely continuous with respect to~$\gamma$. By the Radon-Nikodym theorem \cite[Theorem 1.30]{evansf}, for all Borel sets~$E$ in~$\R^{n}$, $$\nu(E)=\int_{E}D_{\gamma}\nu\dgamma.$$
By the Lebesgue-Besicovitch differentiation theorem \cite[Theorem 1.32]{evansf}, for all Borel sets~$E$ in~$\Omega$,
\begin{equation}\label{nuege}
\nu(E)=\int_{E}\lim_{r\rightarrow 0}\frac{\nu(B_{r}(x))}{\gamma(B_{r}(x))}\dgamma\leq\int_{E}\frac{1}{\mathbb{S}_{g}(x)}\dgamma\leq\frac{\gamma(E)}{S_{g}^{*}},
\end{equation} i.e.,~$S_{g}^{*}\nu(E)\leq\gamma(E)$. Furthermore, for every~$x\in\Omega\setminus\overline{\Sigma_{g}}$,~$\mathbb{S}_{g}(x)=\infty$. Then \eqref{nuege} yields~$\nu(\Omega\setminus\overline{\Sigma_{g}})=0$.
\fi
\eproof
\blemma\label{measurec}
 Let~$\{\phi_{k}\}_{k\in\mathbb{N}}$ converge  to~$\phi$ weakly in~$\widetilde{W}^{1,p}_{0}(\Omega)$. Suppose that~$\vol (\overline{\Sigma_{g}})=0$. Then$$\gamma'(\Omega)\geq \int_{\Omega}(|\nabla \phi|_{\mathcal{A}}^{p}+V|\phi|^{p})\dx+S_{g}^{*}\nu(\Omega).$$
\elemma
\bproof
The proof, composed of three steps, is similar to that of \cite[Lemma 6.5]{Das}.
\begin{itemize}
\item For all Borel sets~$E$ in~$\Omega$,~$\gamma'(E)\geq \int_{E}(|\nabla \phi|_{\mathcal{A}}^{p}+V|\phi|^{p})\dx$. By \cite[Theorem 2.14]{Rudin} and \cite[Lemma 1.18]{Kinnunen}, it suffices to show that for all~$\varphi\in \core$ such that~$0\leq\varphi\leq 1$,$$\int_{\Omega}\varphi\dgammaa\geq \int_{\Omega}\varphi(|\nabla \phi|_{\mathcal{A}}^{p}+V|\phi|^{p})\dx.$$ By virtue of Condition (H1) and Fatou's lemma, up to a subsequence,
\begin{align*}
\int_{\Omega}\varphi\dgammaa&=\lim_{k\rightarrow\infty}\int_{\Omega}\varphi(|\nabla \phi_{k}|_{\mathcal{A}}^{p}+V|\phi_{k}|^{p})\dx\\
&= \liminf_{k\rightarrow\infty}\int_{\Omega}\varphi(|\nabla \phi_{k}|_{\mathcal{A}}^{p}+V^{+}|\phi_{k}|^{p})\dx-\lim_{k\rightarrow\infty}\int_{\Omega}\varphi V^{-}|\phi_{k}|^{p}\dx\\
&\geq\liminf_{k\rightarrow\infty}\int_{\Omega}\varphi|\nabla \phi_{k}|_{\mathcal{A}}^{p}\dx+\liminf_{k\rightarrow\infty}\int_{\Omega}\varphi V^{+}|\phi_{k}|^{p}\dx-\int_{\Omega}\varphi V^{-}|\phi|^{p}\dx\\
&\geq \liminf_{k\rightarrow\infty}\int_{\Omega}\varphi|\nabla \phi_{k}|_{\mathcal{A}}^{p}\dx+\int_{\Omega}\varphi V|\phi|^{p}\dx\\
&\geq \int_{\Omega}\varphi|\nabla \phi|_{\mathcal{A}}^{p}\dx+\int_{\Omega}\varphi V|\phi|^{p}\dx.
\end{align*}
%where for the last step, we refer to \cite[Theorem 2.2]{Dintegral} and \cite[Proposition 3.5]{Brezis}.
\item For all Borel sets~$E\subseteq\overline{\Sigma_{g}}\cap \Omega$,~$\gamma(E)=\gamma'(E)$. For every~$m\in\mathbb{N}$, there exists an open set~$U_{m}\subseteq\Omega$ such that~$E\subseteq
U_{m}$ and~$\vol(U_{m}\setminus E)<1/m$. By taking intersections, we may also require~$\lim_{m\rightarrow\infty}\gamma(U_{m})=\gamma(E)$ and~$\lim_{m\rightarrow\infty}\gamma'(U_{m})=\gamma'(E)$.
Since~$E\subseteq\overline{\Sigma_{g}}$ and~$\vol (\overline{\Sigma_{g}})=0$,~$\vol(U_{m})=\vol(U_{m}\setminus E)<1/m$ for all~$m\in\mathbb{N}$. For every~$m\in\mathbb{N}$,~$\varphi\in C_{c}^{\infty}(U_{m})$ with~$0\leq\varphi\leq 1$, and~$\varepsilon>0$, we may estimate:
\begin{align*}
%&\left|\int_{\Omega}\varphi\dgammak-\int_{\Omega}\varphi\dgammaak\right|\\
&\left|\int_{\Omega}\varphi(|\nabla(\phi_{k}-\phi)|_{\mathcal{A}}^{p}+V|\phi_{k}-\phi|^{p})\dx-\int_{\Omega}\varphi(|\nabla \phi_{k}|_{\mathcal{A}}^{p}+V|\phi_{k}|^{p})\dx\right|\\
&\leq \int_{\Omega}\varphi\left||\nabla(\phi_{k}-\phi)|_{\mathcal{A}}^{p}-|\nabla \phi_{k}|_{\mathcal{A}}^{p}\right|\dx+\int_{\Omega}\varphi |V|\left||\phi_{k}-\phi|^{p}-|\phi_{k}|^{p}\right|\dx\\
&\leq\varepsilon\int_{\Omega}\varphi|\nabla \phi_{k}|_{\mathcal{A}}^{p}\dx+C(\varepsilon,p)\int_{\Omega}\varphi|\nabla \phi|_{\mathcal{A}}^{p}\dx+\varepsilon\int_{\Omega}\varphi |V||\phi_{k}|^{p}\dx\\
&+C(\varepsilon,p)\int_{\Omega}\varphi |V||\phi|^{p}\dx\leq C\varepsilon+C(\varepsilon,p)\int_{U_{m}}(|\nabla \phi|_{\mathcal{A}}^{p}+|V||\phi|^{p})\dx,
%=\left|\int_{\Omega}\varphi\dgammak-\int_{\Omega}\varphi\dgammaak\right%|
\end{align*}
where the last inequality is based on the boundedness of~$\{\phi_{k}\}_{k\in\mathbb{N}}$ in~$\widetilde{W}^{1,p}_{0}(\Omega)$.
Furthermore, letting~$k\rightarrow\infty$ in a suitable subsequence, we get
$$\left|\int_{\Omega}\varphi\dgamma-\int_{\Omega}\varphi\dgammaa\right|\leq C\varepsilon+C(\varepsilon,p)\int_{U_{m}}(|\nabla \phi|_{\mathcal{A}}^{p}+|V||\phi|^{p})\dx.$$
Then by \cite[Lemma 1.18]{Kinnunen}, for all~$m\in\mathbb{N}$,~$\varphi\in C_{c}(U_{m})$ with~$0\leq\varphi\leq 1$, and~$\varepsilon>0$,$$\left|\int_{\Omega}\varphi\dgamma-\int_{\Omega}\varphi\dgammaa\right|\leq C\varepsilon+C(\varepsilon,p)\int_{U_{m}}(|\nabla \phi|_{\mathcal{A}}^{p}+|V||\phi|^{p})\dx.$$ For every~$m\in\mathbb{N}$, by taking the maximum function sequence of two sequences, we may find a sequence~$\{\varphi_{k}^{(m)}\}_{k\in\mathbb{N}}\subseteq C_{c}(U_{m})$ such that~$\lim_{k\rightarrow\infty} \int_{\Omega}\varphi_{k}^{(m)}\dgamma=\gamma(U_{m})$ and~$\lim_{k\rightarrow\infty} \int_{\Omega}\varphi_{k}^{(m)}\dgammaa=\gamma'(U_{m})$. It follows that for all~$m\in\mathbb{N}$ and~$\varepsilon>0$,
\begin{align*}
\left|\gamma(U_{m})-\gamma'(U_{m})\right|&=\lim_{k\rightarrow\infty}\left|\int_{\Omega}\varphi_{k}^{(m)}\dgamma-\int_{\Omega}\varphi_{k}^{(m)}\dgamma\right|\\
&\leq C\varepsilon+C(\varepsilon,p)\int_{U_{m}}(|\nabla \phi|_{\mathcal{A}}^{p}+|V||\phi|^{p})\dx.
\end{align*}
Letting~$m\rightarrow\infty$, we deduce that for all~$\varepsilon>0$,
$$\left|\gamma(E)-\gamma'(E)\right|\leq C\varepsilon.$$
Hence~$\gamma(E)=\gamma'(E)$.
\item Now by the first two steps and Lemma \ref{ng431},  we calculate: 
\begin{align*}
\gamma'(\Omega)&=\gamma'(\Omega\setminus\overline{\Sigma_{g}})+\gamma'(\Omega\cap\overline{\Sigma_{g}})\\
&\geq \int_{\Omega\setminus\overline{\Sigma_{g}}}(|\nabla \phi|_{\mathcal{A}}^{p}+V|\phi|^{p})\dx+S_{g}^{*}\nu(\Omega\cap\overline{\Sigma_{g}})\\
&\geq \int_{\Omega}(|\nabla \phi|_{\mathcal{A}}^{p}+V|\phi|^{p})\dx+S_{g}^{*}\nu(\Omega),
\end{align*}
where in the last inequality, we also use~$\vol (\overline{\Sigma_{g}})=0$.\qedhere
\end{itemize}
%The proof is complete.
\eproof
\textbf{From now on, in this subsection, we assume that~$\Omega$ has a smooth exhaustion~$\{\omega_{i}\}_{i\in\mathbb{N}}$ such that~$\Omega\setminus\overline{\omega_{i}}$ %and~$\omega_{i+1}\setminus\overline{\omega_{i}}$ 
is a domain for all~$i\in\mathbb{N}$, and let us fix such an exhaustion.}%such that~$\Omega\setminus\overline{\omega_{i}}$ and~$\omega_{i+1}\setminus\overline{\omega_{i}}$ are domains for all~$i\in\mathbb{N}$.
\blemma\label{manyl}
 Let~$\{\phi_{k}\}_{k\in\mathbb{N}}$ converge  to~$\phi$ weakly in~$\widetilde{W}^{1,p}_{0}(\Omega)$. For every~$i\in\mathbb{N}$, take~$\Phi_{i}\in C(\Omega)$ such that~$0\leq\Phi_{i}\leq 1$,~$\Phi_{i}|_{\overline{\omega_{i}}}=0$, and~$\Phi_{i}|_{\Omega\setminus\omega_{i+1}}= 1$. Suppose that~$g$ is nonnegative. Then up to a subsequence with respect to~$k$, 
 \begin{enumerate} \item$\myd{\lim_{i\rightarrow\infty}\lim_{k\rightarrow\infty}\int_{\Omega\setminus\overline{\omega_{i}}}g|\phi_{k}|^{p}\dx=\lim_{i\rightarrow\infty}\lim_{k\rightarrow\infty}\nu_{k}(\Omega\setminus\overline{\omega_{i}})}$; \item$\myd{\lim_{i\rightarrow\infty}\lim_{k\rightarrow\infty} \nu_{k}(\Omega\setminus\overline{\omega_{i}})=\lim_{i\rightarrow\infty}\lim_{k\rightarrow\infty}\int_{\Omega}\Phi_{i}\dnuk=\lim_{i\rightarrow\infty}\lim_{k\rightarrow\infty}\int_{\Omega}\Phi_{i}g|\phi_{k}|^{p}\dx}$;
\item
\begin{align*}
&\lim_{i\rightarrow\infty}\lim_{k\rightarrow\infty}\int_{\Omega\setminus\overline{\omega_{i}}}(|\nabla \phi_{k}|_{\mathcal{A}}^{p}+V|\phi_{k}|^{p})\dx\\
&=\lim_{i\rightarrow\infty}\lim_{k\rightarrow\infty}\int_{\Omega\setminus\overline{\omega_{i}}}(|\nabla (\phi_{k}-\phi)|_{\mathcal{A}}^{p}+V|\phi_{k}-\phi|^{p})\dx;
\end{align*}
\item\begin{align*}
&\lim_{i\rightarrow\infty}\lim_{k\rightarrow\infty}\int_{\Omega\setminus\overline{\omega_{i}}}(|\nabla (\phi_{k}-\phi)|_{\mathcal{A}}^{p}+V|\phi_{k}-\phi|^{p})\dx\\ &=\lim_{i\rightarrow\infty}\lim_{k\rightarrow\infty}\int_{\Omega\setminus\overline{\omega_{i}}}\Phi_{i}(|\nabla (\phi_{k}-\phi)|_{\mathcal{A}}^{p}+V|\phi_{k}-\phi|^{p})\dx\\
&=\lim_{i\rightarrow\infty}\lim_{k\rightarrow\infty}\int_{\Omega\setminus\overline{\omega_{i}}}\Phi_{i}(|\nabla \phi_{k}|_{\mathcal{A}}^{p}+V|\phi_{k}|^{p})\dx.
\end{align*}
 \end{enumerate}
\elemma
\bproof
By virtue of a diagonalization argument, up to a subsequence with respect to~$k$, we may replace all the limits superior in \cite[Lemma 6.6]{Das} with limits. For (3) and (4), we split~$V$ into~$V^{+}-V^{-}$. Then the proof is analogous to that of \cite[Lemma 6.6]{Das} and hence omitted.
\eproof
The proof of the following lemma is analogous to that of \cite[Lemma 6.7]{Das} and hence omitted. But we point out the major differences.
\blemma\label{llemma}
Suppose that~$g$ is nonnegative and~$\{\phi_{k}\}_{k\in\mathbb{N}}$ converges to~$\phi$ weakly in~$\widetilde{W}^{1,p}_{0}(\Omega)$. Up to a subsequence with respect to~$k$, let$$\nu_{\infty}\triangleq\lim_{i\rightarrow\infty}\lim_{k\rightarrow\infty}\nu_{k}(\Omega\setminus\overline{\omega_{i}})\quad\mbox{and}\quad \gamma_{\infty}\triangleq\lim_{i\rightarrow\infty}\lim_{k\rightarrow\infty}\int_{\Omega\setminus\overline{\omega_{i}}}(|\nabla (\phi_{k}-\phi)|_{\mathcal{A}}^{p}+V|\phi_{k}-\phi|^{p})\dx.$$ Then up to a subsequence with respect to~$k$,
\begin{enumerate}
\item$\myd{S^{\infty}_{g}\nu_{\infty}\leq \gamma_{\infty}}$;
\item$\myd{\lim_{k\rightarrow\infty}\int_{\Omega}g|\phi_{k}|^{p}\dx=\int_{\Omega}g|\phi|^{p}\dx+\nu(\Omega)+\nu_{\infty}}$;
\item if, in addition,~$\vol (\overline{\Sigma_{g}})=0$, then
$$\lim_{k\rightarrow\infty}Q[\phi_{k}]\geq Q[\phi]+S^{*}_{g}\nu(\Omega)+\gamma_{\infty}.$$
\end{enumerate}
\elemma
\bproof
(1): For every~$i\in\mathbb{N}$, take~$\Phi_{i}\in C^{\infty}(\Omega)$ such that~$0\leq\Phi_{i}\leq 1$,~$\Phi_{i}|_{\overline{\omega_{i+1}}}= 0$, and~$\Phi_{i}|_{\Omega\setminus\omega_{i+2}}= 1$. 
\iffalse
Then for all~$k,i\in\mathbb{N}$,~$(\phi_{k}-\phi)\Phi_{i}\in \widetilde{W}^{1,p}_{0}(\Omega\setminus\overline{\omega_{i}})$ (see \cite{Kinnunen}). Recalling that~$g\in\mathcal{H}(\Omega)$, we can assert that for all~$k,i\in\mathbb{N}$,
$$S_{g}(\Omega\setminus\overline{\omega_{i}})\int_{\Omega\setminus\overline{\omega_{i}}}g|(\phi_{k}-\phi)\Phi_{i}|^{p}\dx\leq \int_{\Omega\setminus\overline{\omega_{i}}}(|\nabla((\phi_{k}-\phi)\Phi_{i})|_{\mathcal{A}}^{p}+V|(\phi_{k}-\phi)\Phi_{i}|^{p})\dx.$$ up to a subsequence with respect to~$k$ and for all~$i\in\mathbb{N}$, Lemma \ref{elimit} leads to
$$\lim_{k\rightarrow\infty}\int_{\Omega\setminus\overline{\omega_{i}}}|\nabla((\phi_{k}-\phi)\Phi_{i})|_{\mathcal{A}}^{p}\dx=\lim_{k\rightarrow\infty}\int_{\Omega\setminus\overline{\omega_{i}}}|\nabla(\phi_{k}-\phi)|_{\mathcal{A}}^{p}|\Phi_{i}|^{p}\dx.$$
up to a subsequence with respect to~$k$, by Lemma \ref{manyl} (2) and (4)  , we get the result.
\fi

(2): For every~$i\in\mathbb{N}$, take~$\Phi_{i}\in C^{\infty}(\Omega)$ such that~$0\leq\Phi_{i}\leq 1$,~$\Phi_{i}|_{\overline{\omega_{i}}}= 0$, and~$\Phi_{i}|_{\Omega\setminus\omega_{i+1}}= 1$.
\eproof
By virtue of the concentration compactness method, we prove that under suitable conditions, the Hardy constant is attained.
%The following theorem is the third attainment of the Hardy constant.
%\Hmm{For the proof, see [V2,Theorem 4.37].}
\btheorem\label{ccmthm}
Let~$g\in \mathcal{H}(\Omega)\setminus\{0\}$. Suppose that~$S_{g}<\min\{S_{g}^{*},S_{g}^{\infty}\}$ and~$\vol (\overline{\Sigma_{g}})=0$. Then~$S_{g}$ is attained at some~$\phi\in \widetilde{W}^{1,p}_{0}(\Omega)\setminus\{0\}$. %there exists~$\phi\in \widetilde{W}^{1,p}_{0}(\Omega)$ such that~$\int_{\Omega}|g||\phi|^{p}\dx>0$ and$$S_{g}=\frac{Q[\phi]}{\int_{\Omega}|g||\phi|^{p}\dx}.$$
\etheorem
%\iffalse
\bproof
The proof is analogous to that of \cite[Theorem 6.1]{Das}.
For every~$\phi\in W^{1,p}(\Omega)\cap C_{c}(\Omega)$ such that~$ \int_{\Omega}|g||\phi|^{p}\dx>0$, we define
$$\mathcal{R}(\phi)\triangleq\frac{Q[\phi]}{\int_{\Omega}|g||\phi|^{p}\dx}.$$
Note that
\begin{align*}
S_{g}&=\inf\bigg\{Q[\phi]~\bigg|~\phi\in W^{1,p}(\Omega)\cap C_{c}(\Omega) \mbox{~and} \int_{\Omega}|g||\phi|^{p}\dx=1\bigg\}\\
&=\inf\bigg\{\mathcal{R}(\phi)~\bigg|~\phi\in W^{1,p}(\Omega)\cap C_{c}(\Omega) \mbox{~and} \int_{\Omega}|g||\phi|^{p}\dx>0\bigg\}\\
&=\inf\bigg\{\mathcal{R}(\phi)~\bigg|~\phi\in W^{1,p}(\Omega)\cap C_{c}(\Omega), \int_{\Omega}|g||\phi|^{p}\dx>0,~\mbox{and}~Q_{p,\mathcal{A},V^{+}}[\phi]=1~\bigg\}.
\end{align*}
Take a sequence~$\{\phi_{k}\}_{k\in\mathbb{N}}\subseteq W^{1,p}(\Omega)\cap C_{c}(\Omega)$ such that for all~$k\in\mathbb{N}$,$$ \int_{\Omega}|g||\phi_{k}|^{p}\dx>0\quad\mbox{and}\quad Q_{p,\mathcal{A},V^{+}}[\phi_{k}]=1,$$
and~$\lim_{k\rightarrow\infty}\mathcal{R}(\phi_{k})=S_{g}$. The sequence~$\{\phi_{k}\}_{k\in\mathbb{N}}$ is bounded in~$\widetilde{W}^{1,p}_{0}(\Omega)$ because for all~$k\in\mathbb{N}$,$$Q_{p,\mathcal{A},V^{+}}[\phi_{k}]=1,\quad Q_{p,\mathcal{A},V^{+}}[\phi_{k}]\geq\int_{\Omega}V^{-}|\phi_{k}|^{p}\dx,\quad\mbox{and}\quad Q[\phi_{k}]\geq S_{\mathcal{V}}\int_{\Omega}\mathcal{V}|\phi_{k}|^{p}\dx.$$ Then up to a subsequence,$$\lim_{k\rightarrow\infty}\int_{\Omega}|g||\phi_{k}|^{p}\dx=\Vert g\Vert_{\mathcal{H}}\lim_{k\rightarrow\infty}Q[\phi_{k}].$$  By Lemma \ref{pav0}, up to a subsequence,~$\{\phi_{k}\}_{k\in\mathbb{N}}$ converges to some~$\phi$ weakly in~$\widetilde{W}^{1,p}_{0}(\Omega)$. According to the argument preceding Lemma \ref{ng431}, there exist three Radon measures~$\nu,\gamma$, and~$\gamma'$ on~$\R^{n}$ such that up to a subsequence, for all~$\varphi\in C_{c}(\Omega)$, 
$$\lim_{k\rightarrow\infty}\int_{\Omega}\varphi \dnuk=\int_{\Omega}\varphi \dnu,\quad\lim_{k\rightarrow\infty}\int_{\Omega}\varphi(|\nabla(\phi_{k}-\phi)|_{\mathcal{A}}^{p}+V|\phi_{k}-\phi|^{p})\dx=\int_{\Omega}\varphi\dgamma,%\quad\mbox{and}\quad\lim_{k\rightarrow\infty}\int_{\Omega}\phi \dgammaak=\int_{\Omega}\phi\dgammaa,
$$
and$$\lim_{k\rightarrow\infty}\int_{\Omega}\varphi(|\nabla \phi_{k}|_{\mathcal{A}}^{p}+V|\phi_{k}|^{p})\dx=\int_{\Omega}\varphi\dgammaa.$$ 
By Lemma \ref{llemma} (2),
$$\lim_{k\rightarrow\infty}\int_{\Omega}g|\phi_{k}|^{p}\dx=\int_{\Omega}g|\phi|^{p}\dx+\nu(\Omega)+\nu_{\infty}.$$ 
We continue by showing that~$\nu(\Omega)=\nu_{\infty}=0$. On the contrary, suppose that~$\nu(\Omega)+\nu_{\infty}>0$. Then by Lemma \ref{llemma} (1) and (3) and the condition that~$S_{g}<\min\{S_{g}^{*},S_{g}^{\infty}\}$,
\begin{align*}
&\lim_{k\rightarrow\infty}\int_{\Omega}g|\phi_{k}|^{p}\dx=\Vert g\Vert_{\mathcal{H}}\lim_{k\rightarrow\infty}Q[\phi_{k}]\geq \Vert g\Vert_{\mathcal{H}}\left(\int_{\Omega}(|\nabla \phi|_{\mathcal{A}}^{p}+V|\phi|^{p})\dx+S^{*}_{g}\nu(\Omega)+\gamma_{\infty}\right)\\
&\geq \Vert g\Vert_{\mathcal{H}}\left(S_{g}\int_{\Omega}g|\phi|^{p}\dx+S^{*}_{g}\nu(\Omega)+\gamma_{\infty}\right)\geq \Vert g\Vert_{\mathcal{H}}\left(S_{g}\int_{\Omega}g|\phi|^{p}\dx+S^{*}_{g}\nu(\Omega)+S^{\infty}_{g}\nu_{\infty}\right)\\
&>\Vert g\Vert_{\mathcal{H}}\left(S_{g}\int_{\Omega}g|\phi|^{p}\dx+S_{g}\nu(\Omega)+S_{g}\nu_{\infty}\right)=\lim_{k\rightarrow\infty}\int_{\Omega}g|\phi_{k}|^{p}\dx,
\end{align*}
which is absurd and validates our claim. In other words, $$\lim_{k\rightarrow\infty}\int_{\Omega}g|\phi_{k}|^{p}\dx=\int_{\Omega}g|\phi|^{p}\dx.$$
Because~$\{\phi_{k}\}_{k\in\mathbb{N}}$ converges to~$\phi$ weakly in~$\widetilde{W}^{1,p}_{0}(\Omega)$, up to a subsequence,
$$\int_{\Omega}(|\nabla \phi|_{\mathcal{A}}^{p}+V^{+}|\phi|^{p})\dx\leq\liminf_{k\rightarrow\infty}\int_{\Omega}(|\nabla \phi_{k}|_{\mathcal{A}}^{p}+V^{+}|\phi_{k}|^{p})\dx=1.$$ By Condition (H1), up to a subsequence,
$$\lim_{k\rightarrow\infty}\int_{\Omega}V^{-}|\phi_{k}|^{p}\dx=\int_{\Omega}V^{-}|\phi|^{p}\dx.$$
We claim that~$\int_{\Omega}g|\phi|^{p}\dx>0$. Otherwise,$$0=\int_{\Omega}g|\phi|^{p}\dx=\lim_{k\rightarrow\infty}\int_{\Omega}|g||\phi_{k}|^{p}\dx=\Vert g\Vert_{\mathcal{H}}\lim_{k\rightarrow\infty}Q[\phi_{k}].$$ It follows that$$0=\lim_{k\rightarrow\infty}Q[\phi_{k}]=1-\lim_{k\rightarrow\infty}\int_{\Omega}V^{-}|\phi_{k}|^{p}\dx=1-\int_{\Omega}V^{-}|\phi|^{p}\dx.$$
Then$$0\leq Q[\phi]=Q_{p,\mathcal{A},V^{+}}[\phi]-\int_{\Omega}V^{-}|\phi|^{p}\dx\leq 0.$$ Therefore,$$0=Q[\phi]\geq S_{\mathcal{V}}\int_{\Omega}\mathcal{V}|\phi|^{p}\dx.$$ 
Since~$\mathcal{V}$ is positive in~$\Omega$,~$\phi=0$ a.e. in~$\Omega$, which is 
 impossible because~$\int_{\Omega}V^{-}|\phi|^{p}\dx=1$.
Finally, we calculate:
\begin{align*}
S_{g}&\leq \frac{\int_{\Omega}(|\nabla \phi|_{\mathcal{A}}^{p}+V|\phi|^{p})\dx}{\int_{\Omega}g|\phi|^{p}\dx}\\
&\leq \frac{\lim_{k\rightarrow\infty}\int_{\Omega}(|\nabla \phi_{k}|_{\mathcal{A}}^{p}+V|\phi_{k}|^{p})\dx}{\lim_{k\rightarrow\infty}\int_{\Omega}g|\phi_{k}|^{p}\dx}\\
&=\lim_{k\rightarrow\infty}\mathcal{R}[\phi_{k}]\\
%&=\lim_{k\rightarrow\infty}\frac{1}{\int_{\Omega}g|\phi_{k}|^{p}\dx}\\
&=S_{g},
\end{align*}
which gives the desired equality.
%Finally,$$S_{g}=\frac{Q[u]}{\int_{\Omega}|g||u|^{p}\dx}.$$
\eproof
%\fi
%\elemma
The following corollary is a method for constructing Hardy-weights whose Hardy constants are attained. The proof is similar to that of \cite[Proposition 6.10]{Das} and hence omitted.
\bcorollary\label{lcorollary}
%Suppose that~$g_{0}\in\mathcal{H}(\Omega)\setminus\{0\}$ is nonnegative and~$S^{*}_{g_{0},\mathcal{A},V}(\Omega)=S^{\infty}_{g_{0},\mathcal{A},V}(\Omega)=\infty$.
For every nonnegative~$g,g_{0}\in\mathcal{H}(\Omega)\setminus\{0\}$ such that%~$\widetilde{W}^{1,p}_{0,g_{0}}(\Omega)\subseteq \widetilde{W}^{1,p}_{0,g+g_{0}}(\Omega)$ and
~$S^{*}_{g_{0}}=S^{\infty}_{g_{0}}=\infty$, and every~$\varepsilon>S_{g_{0}}/S_{g}$,$$S_{g+\varepsilon g_{0}}<\min\{S_{g+\varepsilon g_{0}}^{*},S^{\infty}_{g+\varepsilon g_{0}}\}.$$ If, in addition, $\vol(\overline{\Sigma_{g+\varepsilon g_{0}}})=0$, then~$S_{g+\varepsilon g_{0}}$ is attained at some~$\phi\in \widetilde{W}^{1,p}_{0,g+g_{0}}(\Omega)$. 
%there exists~$\phi\in \widetilde{W}^{1,p}_{0,g+g_{0}}(\Omega)$ such that$$S_{g+\varepsilon g_{0}}\int_{\Omega}(g+\varepsilon g_{0})|\phi|^{p}\dx=Q[\phi].$$
\ecorollary
%\Hmm{I checked the proof, I think that we need the requirement on~$\varepsilon$. We may use the homogeneity. But it is only an alternative statement.}
\bremark
\emph{If~$g_{0}\in M^{q}_{c}(p;\Omega)$, then by Lemma \ref{sginfty} and the definitions of~$S^{\infty}_{g_{0}}$ and~$S^{*}_{g_{0}}$, we have~$S^{*}_{g_{0}}=S^{\infty}_{g_{0}}=\infty$.}
\eremark
In the end, we characterize Condition (H0) (see Definition \ref{condih}) for nonnegative potentials even though our conclusion is stronger than the characterization. The proof is similar to that of \cite[Theorem 6.9]{Das} and hence omitted. 
\btheorem
If~$S^{*}_{g}\!=\!S^{\infty}_{g}\!=\!\infty$, then the operator~$T_{g}$ satisfies Condition (H0). Conversely, if~$V$ is nonnegative and the operator~$T_{g}$ satisfies Condition (H0), then~$S^{*}_{g}\!=\!S^{\infty}_{g}\!=\!\infty$.  
\etheorem%\def{}
{\small %\footnotesize
\appendix%\section{}
\section{Two-sided estimates for the Bregman distances of~$|\cdot|^{p}$ and~$|\cdot|_{A}^{p}$}\label{appendix}
 In this appendix, we collect two two-sided estimates for the convenience of readers, which were proved successively in \cite{Pinliou} and \cite{Regev}. 
\blemma[{\cite[(2.19)]{Pinliou}}]\label{pBE}
For all~$\xi,\eta\in\R^{n}$ ($n\geq 1$),
$$|\xi+\eta|^{p}-|\xi|^{p}-p|\xi|^{p-2}\xi\cdot\eta\asymp |\eta|^{2}(|\eta|+|\xi|)^{p-2},$$
where both equivalence constants depend only on~$p$.
\elemma
\blemma[{\cite[(3.11)]{Regev}}]\label{ABE}
Let~$A\in \R^{n\times n}$ ($n\geq 1$) be a symmetric positive definite matrix. Then for all~$\xi,\eta\in\R^{n}$,
$$|\xi+\eta|_{A}^{p}-|\xi|_{A}^{p}-p|\xi|^{p-2}A\xi\cdot\eta\asymp |\eta|_{A}^{2}(|\eta|_{A}+|\xi|_{A})^{p-2},$$
where~$|\xi|_{A} = \sqrt{A\xi\cdot\xi}$ and both equivalence constants depend only on~$p$.
\elemma
\subsection*{Acknowledgments}
%\begin{itemize}
%\item
This paper is based on part of the author's ongoing PhD thesis of the Technion - Israel Institute of Technology under the supervision of Professor Yehuda Pinchover.
The author thanks him for constant instruction and encouragement.  Thanks are also due to the author's associate advisor Prof. Dr. Matthias Keller of the University of Potsdam for professional suggestions. 
The author acknowledges the support of the Technion and the Israel Science Foundation (grant 637/19) founded by the Israel Academy of Sciences and Humanities. %and is grateful to  for supporting his study.
\bibliographystyle{amsplain}
{}}
{\tiny\MakeUppercase{Department of Mathematics, Technion -
	Israel Institute of Technology, Haifa 3200003, Israel}} 

{\footnotesize \emph{E-mail addresses:} \texttt{yongjun.hou@campus.technion.ac.il; houmathlaw@outlook.com}}
\end{document}